\documentclass[11pt,a4paper]{article}

\usepackage{amsmath}
\usepackage{amsthm}
\usepackage{amssymb}
\usepackage{cain_arxiv}
\usepackage{graphicx}

\theoremstyle{definition}
\newtheorem{definition}{Definition}[section]

\newtheorem{pumprule}{Pumping rule}

\newtheorem{arrowrule}{Arrow rule}

\theoremstyle{plain}
\newtheorem{proposition}[definition]{Proposition}
\newtheorem{lemma}[definition]{Lemma}
\newtheorem{theorem}[definition]{Theorem}
\newtheorem{corollary}[definition]{Corollary}

\numberwithin{equation}{section}

\def\fullref#1#2{%
  \ifdefined\hyperref%
    {\hyperref[#2]{#1 \penalty 200\relax\ref*{#2}}}%
  \else%
    {#1 \penalty 200\relax\ref{#2}}%
  \fi%
}

\newcommand{\defterm}[1]{\textit{#1}}

\newcommand{\fsa}[1]{\mathfrak{#1}}




\newcommand{\nset}{\mathbb{N}}
\newcommand{\zset}{\mathbb{Z}}


\newcommand{\emptyword}{\varepsilon}

\newcommand{\elt}[1]{\overline{#1}}

\DeclareMathOperator{\rr}{\mathnormal{\mu}}
\DeclareMathOperator{\rre}{\mathnormal{\overline{\mu}}}

\newcommand{\conv}{\mathrm{conv}}

\newcommand{\inpargraphic}[1]{\par\medskip\centerline{\includegraphics{#1}}\medskip\noindent\ignorespaces}

\def\ext#1{\overline{#1}}

\hyphenation{fi-nite-ly}

\begin{document}

\title{Unary FA-pre\-sent\-a\-ble binary relations: transitivity and classification results}
\author{Alan J. Cain \& Nik Ru\v{s}kuc}
\date{}

\thanks{The first author's research was funded by the European
  Regional Development Fund through the programme {\sc COMPETE} and by
  the Portuguese Government through the {\sc FCT} (Funda\c{c}\~{a}o
  para a Ci\^{e}ncia e a Tecnologia) under the project {\sc
    PEst-C}/{\sc MAT}/{\sc UI0}144/2011 and through an {\sc FCT}
  Ci\^{e}ncia 2008 fellowship. Two visits by the first author to the
  University of St Andrews, where much of the work described in this
  paper was carried out, were supported by the {\sc EPSRC}-funded
  project {\sc EP}/{\sc H0}11978/1 `Automata, Languages, Decidability
  in Algebra'.}

\maketitle

\address[AJC]{%
Centro de Matem\'{a}tica, Faculdade de Ci\^{e}ncias, Universidade do Porto, \\
Rua do Campo Alegre 687, 4169--007 Porto, Portugal
}
\email{%
ajcain@fc.up.pt
}
\webpage{%
www.fc.up.pt/pessoas/ajcain/
}

\address[NR]{%
School of Mathematics and Statistics, University of St Andrews, \\
North Haugh, St Andrews, Fife KY16 9SS, United Kingdom
}
\email{%
nik@mcs.st-andrews.ac.uk
}
\webpage{%
turnbull.mcs.st-and.ac.uk/~nik/
}

\begin{abstract}
Automatic presentations, also called FA-pre\-sent\-a\-tions, were
introduced to extend finite model theory to infinite structures whilst
retaining the solubility of fundamental decision problems. A
particular focus of research has been the classification of those
structures of some species that admit FA-pre\-sent\-a\-tions. Whilst some
successes have been obtained, this appears to be a difficult problem
in general. A restricted problem, also of significant interest, is to
ask this question for unary FA-pre\-sent\-a\-tions: that is,
FA-pre\-sent\-a\-tions over a one-letter alphabet. This paper studies
unary FA-pre\-sent\-a\-ble binary relations.

It is proven that transitive closure of a unary FA-pre\-sent\-a\-ble
binary relation is itself unary
FA-pre\-sent\-a\-ble. Characterizations are then given of unary
FA-pre\-sent\-a\-ble binary relations, quasi-orders, partial orders,
tournaments, directed trees and forests, undirected trees and forests,
and the orbit structures of unary FA-pre\-sent\-a\-ble partial and
complete mappings, injections, surjections, and bijections.
\end{abstract}

\section{Introduction}

Automatic presentations, also known as FA-pre\-sent\-a\-tions, were
introduced by Khoussainov \& Nerode \cite{khoussainov_autopres} to
fulfill a need to extend finite model theory to infinite structures
while retaining the solubility of interesting decision problems. They
have been applied to structures such as orders
\cite{khoussainov_autopo,khoussainov_linearorders,delhomme_autopresordinal}
and algebraic structures
\cite{oliver_autopresgroups,nies_rings,cort_apsg}.

One main avenue of research has been the classification of those
structures of some species that admit
FA-pre\-sent\-a\-tions. Classifications are known for finitely generated
groups \cite[Theorem~6.3]{oliver_autopresgroups} and cancellative
semigroups \cite[Theorem~13]{cort_apsg}, for integral domains (and
more generally for rings with identity and no
zero divisors) \cite[Corollary~17]{nies_rings}, for Boolean
algebras \cite[Theorem~3.4]{khoussainov_richness}, and for ordinals
\cite{delhomme_autopresordinal}.

Broadly speaking, there has been more success in classifying algebraic
structures than for combinatorial ones. However, it has been possible to
classify certain combinatorial structures that admit \emph{unary} FA-pre\-sent\-a\-tions
(that is, FA-pre\-sent\-a\-tions over a one-letter alphabet),
including, for example, bijective functions
\cite[Theorem~7.12]{blumensath_diploma}, equivalence relations
\cite[Theorem~7.13]{blumensath_diploma}, linear orders
\cite[Theorem~7.15]{blumensath_diploma}, and graphs
\cite[Theorem~7.16]{blumensath_diploma}.

In this paper, we study unary FA-presentable binary relations using a
new diagrammatic representation for unary FA-pre\-sent\-a\-tions that
we recently developed. We have already successfully deployed this
representation in a study of unary FA-presentable algebras
\cite{cr_subalg}. This representation allows us to visualize and
manipulate elements of a unary FA-presentable relational structure in
a way that is more accessible than the corresponding arguments using
languages and automata.

We being by describing the diagrammatic representation of unary
FA-pre\-sent\-a\-tions in \fullref{\S}{sec:pumping}. We first apply it
to prove that, given a unary FA-pre\-sent\-a\-ble structure with a
binary relation $R$, then that structure augmented by the transitive
closure of $R$ also admits a unary FA-pre\-sent\-a\-tion
(\fullref{Theorem}{thm:transclosure}). This result is peculiar to
\emph{unary} FA-pre\-sent\-a\-ble binary relations, because there
exist (non-unary) FA-pre\-sent\-a\-ble graphs in which reachability is
undecidable \cite[Examples~2.4(iv) \&~3.17]{rubin_survey}. As a
corollary of this stronger result, one recovers the previously-known
result asserting the regularity of the reachability relation for
unary FA-presentable undirected graphs of finite degree
\cite[Corollary~7.6]{khoussainov_unarygraphalgorithmic}.

We then turn to classification results. First, we give a technical
theorem that classifies all unary FA-pre\-sent\-a\-ble binary
relations (\fullref{Theorem}{thm:binrelchar}). With the aid of some
lemmata, this allows us to classify the unary FA-pre\-sent\-a\-ble
quasi-orders (\fullref{Theorem}{thm:qosetchar}), partial orders
(\fullref{Theorem}{thm:posetchar}), and tournaments
(\fullref{Theorem}{thm:tournamentchar}).

We then classify the unary FA-pre\-sent\-a\-ble directed trees
(\fullref{Theorem}{thm:directedtreechar}) and forests
(\fullref{Theorem}{thm:directedforestchar}) and then undirected trees
and forests (\fullref{Theorem}{thm:forestchar}). Finally, we classify,
in terms of their the orbit structures, unary FA-pre\-sent\-a\-ble
mappings (\fullref{Theorem}{thm:mapchar}), injections
(\fullref{Theorem}{thm:injectionchar}), and surjections
(\fullref{Theorem}{thm:surjectionchar}), and recover the
previously-known characterization of unary FA-presentable bijections
(\fullref{Theorem}{thm:bijectionchar}). These results also
characterize the unary FA-presentable partial versions of these types
of maps.

\section{Preliminaries}

The reader is assumed to be familiar with the theory of finite
automata and regular languages; see \cite[Chs~2--3]{hopcroft_automata}
for background reading. The empty word (over any alphabet) is denoted
$\emptyword$.

\begin{definition}
Let $L$ be a regular language over a finite alphabet $A$. Define, for $n \in \nset$,
\[
L^n = \{(w_1,\ldots,w_n) : w_i \in L\text{ for $i=1,\ldots,n$}\}.
\]
Let $\$$ be a new symbol not in $A$. The mapping $\conv : (A^*)^n\to ((A\cup\{\$\})^n)^*$ is defined as follows. Suppose
\begin{align*}
w_1 &= w_{1,1}w_{1,2}\cdots w_{1,m_1},\\
w_2 &= w_{2,1}w_{2,2}\cdots w_{2,m_2},\\
&\vdots\\
w_n &= w_{n,1}w_{n,2}\cdots w_{n,m_n},
\end{align*}
where $w_{i,j} \in A$. Then $\conv(w_1,\ldots,w_n)$ is defined to be
\[
(w_{1,1},w_{2,1},\ldots,w_{n,1})(w_{1,2},w_{2,2},\ldots,w_{n,2})\cdots (w_{1,m},w_{2,m},\ldots,w_{n,m}),
\]
where $m = \max\{m_i:i=1,\ldots,n\}$ and with $w_{i,j} = \$$ whenever
$j > m_i$.
\end{definition}

Observe that the mapping $\conv$ maps an $n$-tuple of words to a word of $n$-tuples.

\begin{definition}
Let $A$ be a finite alphabet, and let $R \subseteq (A^*)^n$ be a
relation on $A^*$. Then the relation $R$ is said to be \defterm{regular} if
\[
\conv R = \{\conv(w_1,\ldots,w_n) : (w_1,\ldots,w_n) \in R\}
\]
is a regular language over $(A\cup\{\$\})^n$.
\end{definition}

\begin{definition}
Let $\mathcal{S}=(S,R_1,\ldots,R_n)$ be a relational structure. Let
$L$ be a regular language over a finite alphabet $A$, and let $\phi :
L\rightarrow S$ be a surjective mapping. Then $(L,\phi)$ is an
\defterm{automatic presentation} or an \defterm{FA-pre\-sent\-a\-tion}
for $\mathcal{S}$ if, for all relations $R \in \{=,R_1,\ldots,R_n\}$,
the relation
\[
\Lambda(R,\phi)=\{(w_1,w_2,\ldots,w_r)\in L^r : R(w_1\phi,\ldots,w_{r}\phi)\},
\]
where $r$ is the arity of $R$, is regular.

If $\mathcal{S}$ admits an FA-pre\-sent\-a\-tion, it is said to be
\defterm{FA-pre\-sent\-a\-ble}.

If $(L,\phi)$ is an FA-pre\-sent\-a\-tion for $\mathcal{S}$ and the
mapping $\phi$ is injective (so that every element of the structure
has exactly one representative in $L$), then $(L,\phi)$ is said to be
\defterm{injective}.

If $(L,\phi)$ is an FA-pre\-sent\-a\-tion for $\mathcal{S}$ and $L$ is
a language over a one-letter alphabet, then $(L,\phi)$ is a
\defterm{unary} FA-pre\-sent\-a\-tion for $\mathcal{S}$, and
$\mathcal{S}$ is said to be \defterm{unary FA-pre\-sent\-a\-ble}.
\end{definition}

Every FA-presentable structure admits an injective \defterm{binary}
FA-presentation; that is, where the language of representatives is
over a two-letter alphabet; see
\cite[Corollary~4.3]{khoussainov_autopres} and
\cite[Lemma~3.3]{blumensath_diploma}. Therefore the class of binary
FA-presentable structures is simply the class of FA-presentable
structures. However, there are many structures that admit
FA-presentations but not unary FA-presentations: for instance, any
finitely generated virtually abelian group is FA-presentable
\cite[Theorem~8]{oliver_autopresgroups}, but unary FA-presentable
groups must be finite \cite[Theorem~7.19]{blumensath_diploma}. Thus
there is a fundamental difference between unary FA-presentable
structures and all other FA-presentable
structures.

\begin{definition}
If $(L,\phi)$, where $L \subseteq a^*$, is an injective unary
FA-pre\-sent\-a\-tion for a structure $\mathcal{S}$, and $s$ is an
element of $\mathcal{S}$, then $\ell(s)$ denotes the length of the unique
word $w \in L$ with $w\phi = s$. [Notice that $a^{\ell(s)} =
s\phi^{-1}$ for all elements $s$ of $\mathcal{S}$.]
\end{definition}

The fact that a tuple of elements $(s_1,\ldots,s_n)$ of a structure $\mathcal{S}$
satisfies a first-order formula $\theta(x_1,\ldots,x_n)$ is denoted $\mathcal{S}
\models \theta(s_1,\ldots,s_n)$. 

\begin{proposition}[{\cite[Theorem~4.4]{khoussainov_autopres}}]
\label{prop:firstorderreg}
Let $\mathcal{S}$ be a structure with an FA-pre\-sent\-a\-tion $(L,\phi)$. For
every first-order formula $\theta(x_1,\ldots,x_n)$ over the structure, the relation
\[
\Lambda(\theta,\phi) = \big\{(w_1,\ldots,w_n) \in L^n : \mathcal{S} \models\theta(w_1\phi,\ldots,w_n\phi)\big\}
\]
is regular.
\end{proposition}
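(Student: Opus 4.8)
The plan is to argue by induction on the structure of the first-order formula $\theta$, invoking the closure properties of regular languages at each step. Throughout I would identify a relation $\Lambda(\psi,\phi)$, for a subformula $\psi$ with $m$ free variables, with its convolution $\conv\Lambda(\psi,\phi)$, a candidate regular language over $(A\cup\{\$\})^m$; and I would use two standard facts without comment: that $\conv(L^m)$ is regular since $L$ is, and that the set of \emph{proper} convolutions over $(A\cup\{\$\})^m$ (those in which, within each track, $\$$ is never followed by a letter of $A$ and no column consists entirely of $\$$) is itself regular. For the base case, $\theta$ is atomic, hence of the form $x_i = x_j$ or $R(x_{i_1},\ldots,x_{i_r})$ for a relation $R$ of $\mathcal{S}$ of arity $r$; the definition of an FA-presentation supplies regularity of $\Lambda(=,\phi)$ and of $\Lambda(R,\phi)$, and to re-express these in the free variables $x_1,\ldots,x_n$ of $\theta$ I would permute and duplicate tracks (a letter-to-letter operation handling repeated or reordered indices) and then \emph{cylindrify}, i.e.\ adjoin tracks ranging over all of $L$, realized by running the given automaton in parallel with copies of an automaton for $L$.

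For the Boolean connectives I would assume the claim for $\psi$ and $\chi$. If $\theta=\neg\psi$, then $\conv\Lambda(\theta,\phi)$ is the complement of $\conv\Lambda(\psi,\phi)$ intersected with $\conv(L^n)$, hence regular. If $\theta=\psi\wedge\chi$, I would first cylindrify $\psi$ and $\chi$ to the common free-variable tuple $x_1,\ldots,x_n$ and then intersect, using closure under intersection; and $\psi\vee\chi\equiv\neg(\neg\psi\wedge\neg\chi)$ takes care of disjunction.

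The quantifier step is where the only genuine difficulty lies; since $\forall x\,\psi\equiv\neg\exists x\,\neg\psi$ it suffices to treat $\theta(x_1,\ldots,x_n)=\exists x_{n+1}\,\psi(x_1,\ldots,x_{n+1})$. Put $K=\conv\Lambda(\psi,\phi)$, regular over $(A\cup\{\$\})^{n+1}$, and let $h$ be the length-preserving homomorphism deleting the last component of each letter, so $h(K)$ is regular. The main obstacle is that $h(K)$ need not consist of proper convolutions: if a witness $x_{n+1}$ has a representative strictly longer than those of all of $x_1,\ldots,x_n$, then deleting its track leaves $\conv(w_1,\ldots,w_n)$ followed by a nonempty block of all-$\$$ columns. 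So the correct description is
\[
\conv\Lambda(\theta,\phi)=\bigl\{\,v\in\conv(L^n)\;:\;v\,(\$,\ldots,\$)^k\in h(K)\ \text{for some}\ k\ge 0\,\bigr\},
\]
and I would prove the right-hand side regular by starting from a deterministic automaton for $h(K)$, re-accepting at exactly those states from which some all-$(\$,\ldots,\$)$ suffix reaches an accepting state (a well-defined set of states, obtained by a reachability computation in the automaton), and intersecting the result with $\conv(L^n)$. This closes the induction. Apart from this ``strip the trailing padding'' operation, the only part that needs careful bookkeeping is the cylindrification aligning free-variable tuples in the atomic and conjunction cases; everything else is a routine appeal to Boolean and projection closure for regular languages.
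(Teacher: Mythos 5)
The paper does not prove this proposition at all: it is imported verbatim from Khoussainov--Nerode \cite[Theorem~4.4]{khoussainov_autopres} and used as a black box, so there is no in-paper argument to compare yours against. Your proof is the standard induction on the structure of $\theta$ and it is correct. The one step that genuinely requires an idea --- that projecting out the witness track of $\conv\Lambda(\psi,\phi)$ can leave a nonempty suffix of all-$\$$ columns when the witness is the longest word in the tuple --- is exactly the point where a naive ``regular languages are closed under homomorphic images'' argument would break, and you handle it correctly: your set is the right quotient of $h(K)$ by $(\$,\ldots,\$)^*$, intersected with $\conv(L^n)$, and right quotient of a regular language by any language preserves regularity (your state-reachability construction is one way to see this). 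Two small bookkeeping points that a full write-up should make explicit, both of which you flag but do not carry out: first, re-indexing an atomic formula $R(x_{i_1},\ldots,x_{i_r})$ to the variable tuple $(x_1,\ldots,x_n)$ is not purely letter-to-letter, because $\conv(w_{i_1},\ldots,w_{i_r})$ may be strictly shorter than $\conv(w_1,\ldots,w_n)$ when the longest component is dropped, so the same trailing-padding issue reappears here (as an inverse rather than a forward projection); second, for repeated indices one duplicates a track letter-by-letter, which is legitimate precisely because equal words receive identical padding in a convolution. Neither point is a gap, just detail to be supplied; the proof is sound as proposed.
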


Proposition~\ref{prop:firstorderreg} is fundamental to the theory
of FA-pre\-sent\-a\-tions and will be used without explicit reference
throughout the paper.

The following important result shows that in the case of unary
FA-presentations for infinite structures, we can assume that the
language of representatives is the language of \emph{all} words over a
one letter alphabet, and also that the map into the domain of the structure
is injective:

\begin{theorem}[{\cite[Theorem~3.1]{crt_unaryfa}}]
\label{thm:allwords}
Let $\mathcal{S}$ be an infinite relational structure that admits a unary
FA-pre\-sent\-a\-tion. Then
$\mathcal{S}$ has an injective unary FA-pre\-sent\-a\-tion
$(a^*,\psi)$.
\end{theorem}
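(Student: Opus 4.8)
The plan is to begin from an arbitrary unary FA-presentation $(L,\phi)$ of $\mathcal{S}$, first replace it by an injective one, and then ``renumber'' the representatives so that they exhaust $a^*$. \emph{Step 1 (passing to an injective presentation).} Recall that a unary language $L\subseteq a^*$ is regular exactly when its length set $N(L)=\{n\in\nset : a^n\in L\}$ is eventually periodic; since $\mathcal{S}$ is infinite and $\phi$ surjective, $N(L)$ is infinite. The relation $\{(a^m,a^n):m\le n\}$ is regular, $L\times L$ is regular, and $\Lambda(=,\phi)$ is regular by hypothesis, so — using that regular relations over $a^*$ are closed under the boolean operations and projection (the mechanism behind \fullref{Proposition}{prop:firstorderreg}) — the sublanguage
\[
K=\bigl\{\,w\in L : \text{no }v\in L\text{ with }|v|<|w|\text{ satisfies }v\phi=w\phi\,\bigr\}
\]
is regular. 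Since the alphabet has one letter, two words of $L$ of equal length coincide, so $K$ contains precisely one word — the shortest — for each $\phi$-class, and $(K,\phi|_K)$ is an injective unary FA-presentation of $\mathcal{S}$. Thus I may assume $(L,\phi)$ is injective.

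\emph{Step 2 (the renumbering map).} Write $L=\{a^{n_0},a^{n_1},a^{n_2},\ldots\}$ with $n_0<n_1<n_2<\cdots$ and define $\psi:a^*\to S$ by $a^i\psi=a^{n_i}\phi$; this is a bijection because $\phi|_L$ is, and under $\psi$ equality becomes the diagonal of $(a^*)^2$, which is regular. For each relation $R$ of $\mathcal{S}$, of arity $r$, the set $\Lambda(R,\psi)$ is precisely the preimage of the regular relation $T=\Lambda(R,\phi)$ under the map $\rho_r\colon(a^{i_1},\ldots,a^{i_r})\mapsto(a^{n_{i_1}},\ldots,a^{n_{i_r}})$. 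So everything reduces to showing that $\rho_r$ pulls regular relations back to regular relations.

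\emph{Step 3 (pulling back along $\rho_r$).} Choose a period $p$ of $N(L)$ that is also a common multiple of the periods occurring in the minimal automata of the convolutions of the finitely many relations $T$ in play. Beyond a threshold, $N(L)$ meets exactly $q$ residue classes modulo $p$, so $(n_i)$ is eventually a union of $q$ arithmetic progressions of common difference $p$: for all large $i$, writing $i=qk+j$ with $0\le j<q$, one has $n_i=pk+c_j$ for fixed constants $c_0<\cdots<c_{q-1}$. Now partition $(a^*)^r$ by (i) the relative order of $i_1,\ldots,i_r$ and (ii) their residues modulo $q$ together with whether each exceeds the threshold; there are finitely many pieces, each regular. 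On a given piece every $n_{i_t}$ is affine in $i_t$ with slope $p/q$ and fixed intercept, the $n_{i_t}$ are monotone in a fixed order, and each difference $i_{t+1}-i_t$ is a multiple of $q$; hence $(a^{n_{i_1}},\ldots,a^{n_{i_r}})\in T$ becomes a conjunction of conditions on the gaps $n_{i_{t+1}}-n_{i_t}$, each an affine, slope-$p/q$ function of $i_{t+1}-i_t$. As $T$ is regular with period dividing $p$, the relevant condition on each gap is eventually periodic with period dividing $p$, and pulling it back along $d\mapsto(p/q)d+\text{const}$ with $d\in q\nset$ yields an eventually periodic — hence regular — condition on $i_{t+1}-i_t$. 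Intersecting over $t$ and over the finitely many pieces and adjoining the finitely many small-exponent tuples directly shows $\Lambda(R,\psi)$ is regular, so $(a^*,\psi)$ is the desired presentation.

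I expect Step 3 to be the crux: the renumbering multiplies lengths not by an integer but by the rational $p/q$, so one cannot simply invoke closure of regularity under length-scaling, and it is exactly the case split on residues modulo $q$ that clears the denominator; relations of arity $\ge 3$ add only bookkeeping, since passing to consecutive gaps reduces them to the binary-gap computation.
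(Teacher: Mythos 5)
The paper does not prove this statement: it is imported verbatim from \cite[Theorem~3.1]{crt_unaryfa}, so there is no internal proof to compare against. Judged on its own terms, your argument is essentially correct and self-contained. Step~1 is the standard ``shortest representative'' normalisation and works exactly because, over a one-letter alphabet, a length class contains a unique word; Step~2 is the right reduction; and Step~3 correctly identifies the real issue, namely that the order-isomorphism $i\mapsto n_i$ rescales lengths by the rational $p/q$, and correctly clears the denominator by splitting on residues modulo $q$. The structural fact you lean on --- that a unary synchronous $r$-tape regular relation, restricted to a fixed weak ordering of the component lengths, is a finite union of products of eventually periodic conditions on the successive segment lengths --- is true and is exactly the automaton structure the paper itself exploits in \fullref{\S}{sec:pumping}, so invoking it is consistent with the paper's toolkit. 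Two small points deserve tightening. First, the acceptance condition is not only a conjunction of conditions on the gaps $n_{i_{t+1}}-n_{i_t}$: there is also an (eventually periodic) condition on the initial segment, i.e.\ on $\min_t n_{i_t}$ itself; it pulls back by the identical computation, but it should be stated. Second, ``adjoining the finitely many small-exponent tuples directly'' is too glib: a tuple may have some coordinates below the threshold and others above, and there are infinitely many such tuples. The fix is already implicit in your partition --- for each of the finitely many assignments of concrete small values to the sub-threshold coordinates, the corresponding $n_{i_t}$ are constants and the remaining coordinates are handled by the same affine/periodic computation --- but say so explicitly. With those repairs the proof is complete.
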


We now gather some miscellaneous preliminary results that we will use
later in the paper:

\begin{theorem}[{\cite[Theorem~7.13]{blumensath_diploma}}]
\label{thm:equivrelchar}
Let $X$ be a set and $\rho$ an equivalance relation on $X$. Then
$(X,\rho)$ is unary FA-pre\-sent\-a\-ble if and only if there are
only finitely many infinite $\rho$-equivalence classes and there is a
bound on the cardinality of the finite $\rho$-equivalence classes.
\end{theorem}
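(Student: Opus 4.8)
The plan is to prove the two implications separately; necessity rests on a pumping analysis of the recognising automaton, and sufficiency on an explicit construction, with the subtle point in each case being the treatment of the \emph{number} of classes. \textbf{Necessity.} The case of finite $X$ is immediate, so suppose $X$ is infinite and, by \fullref{Theorem}{thm:allwords}, fix an injective unary FA-presentation $(a^*,\psi)$; I would identify $a^n\psi\in X$ with $n\in\nset$, so that $\rho$ becomes an equivalence relation on $\nset$ whose graph is a regular relation, recognised by a deterministic finite automaton $\fsa A$ with $N$ states. For $m<n$ we have $\conv(a^m,a^n)=(a,a)^m(\$,a)^{n-m}$, so if $m\mathbin\rho n$ and $n-m\geq N$ then the run of $\fsa A$ on the suffix $(\$,a)^{n-m}$ visits some state twice, and pumping the resulting cycle, of length $\ell\leq N$, gives $m\mathbin\rho(n+t\ell)$ for every $t\geq 0$; hence $[m]$ is infinite. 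Read contrapositively (and symmetrically for $n<m$), a finite class $[m]$ satisfies $[m]\subseteq\{m-N+1,\dots,m+N-1\}$, so every finite class has at most $2N-1$ elements. This is the bound on the finite classes.

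To bound the number of infinite classes, I would use that the state $q_m$ reached by $\fsa A$ after reading the diagonal $(a,a)^m$ is ultimately periodic in $m$: there are a preperiod $P$ and a period $Q$ with $q_m$ determined by $m\bmod Q$ once $m\geq P$. For such $m$ the ``upper part'' $U(m)=\{\,j\geq 0 : m\mathbin\rho(m+j)\,\}$ of the class of $m$ is exactly the set of lengths $j$ for which $\fsa A$ accepts after reading $(\$,a)^j$ from $q_m$, so it depends only on $r:=m\bmod Q$; write it $V(r)$. Since $[m]\cap[0,m)$ is finite, $[m]$ is infinite precisely when $V(r)$ is infinite, and then $V(r)$ contains an arithmetic progression of some common difference $d_r\geq 1$. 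Put $D=\lcm(Q,d_0,\dots,d_{Q-1})$. If $m,m'\geq P$ have $[m]$ infinite and $m\equiv m'\pmod D$, then $m\equiv m'\pmod Q$, so $U(m)=V(r)=U(m')$, and since $d_r\mid D\mid(m'-m)$ one can slide along the common progression inside $V(r)$ to a single $n$ with $m\mathbin\rho n$ and $m'\mathbin\rho n$; hence $[m]=[m']$. Therefore the infinite classes with a representative $\geq P$ fall into at most $D$ residue groups modulo $D$, and together with the at most $P$ classes meeting $\{0,\dots,P-1\}$ there are at most $D+P$ infinite classes.

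\textbf{Sufficiency.} Now suppose the two conditions hold; then $X$ is necessarily countable, and I would present it piecewise. Let $B$ bound the finite classes, let $X_\infty$ be the union of the finitely many infinite classes, and for $1\leq b\leq B$ let $X_b$ be the union of the classes of size exactly $b$, giving a partition $X=X_\infty\sqcup X_1\sqcup\dots\sqcup X_B$ into $\rho$-closed pieces. A disjoint union of finitely many unary FA-presentable relational structures of the same signature is again unary FA-presentable — place piece $t$ on the words of length $\equiv t\pmod{B+1}$ after scaling lengths by $B+1$, noting that this scaling preserves regularity of the equality and $\rho$ relations — so it suffices to present each piece. For $X_\infty$ (if nonempty) I would enumerate its $k\geq 1$ classes $C_1,\dots,C_k$ and the countably infinitely many elements of each, and set $\psi(a^m)$ to be the $(\floor{m/k}+1)$-th element of $C_{(m\bmod k)+1}$: this is a bijection, so equality is trivially regular, and $a^m\mathbin\rho a^n\iff m\equiv n\pmod k$, which is regular. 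For each infinite $X_b$ I would likewise enumerate its classes $E_1,E_2,\dots$ and the $b$ elements of each, and set $\psi(a^m)$ to be the $((m\bmod b)+1)$-th element of $E_{\floor{m/b}+1}$; again equality is trivial, while for $m\leq n$ one has $a^m\mathbin\rho a^n\iff(m\bmod b)+(n-m)<b$, which a small automaton recognises by counting $m\bmod b$ along the diagonal and then rejecting once too many symbols $(\$,a)$ have been read (a finite $X_b$ being trivial to present).

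I expect two points to require the most care. In the necessity direction it is the periodicity bookkeeping just sketched: correctly tracking the diagonal period $Q$ against the periods $d_r$ of the sets $V(r)$ and making the sliding step rigorous. In the sufficiency direction it is realising that the same-size families $X_b$ must be split off and handled separately — a single uniform block size together with a genuinely injective map would force the automaton to recognise the arbitrary sequence of class sizes, which need not be a regular property — so one is driven to the decomposition above and then to the (routine) verification that finite disjoint unions cause no difficulty.
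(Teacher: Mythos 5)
The paper does not actually prove this statement: it is imported verbatim from Blumensath's thesis (Theorem~7.13 there), so there is no in-paper argument to measure yours against. Judged on its own terms, your proof is correct. The necessity half is sound: pumping the $(\$,a)$-tail bounds every finite class by $2N-1$, and your periodicity bookkeeping for the diagonal states $q_m$ together with the sliding argument inside the eventually periodic sets $V(r)$ does correctly collapse the infinite classes with representatives beyond the preperiod into at most $D$ classes (indeed, since every infinite class necessarily has such a representative, the extra $+P$ in your count is superfluous). The sufficiency half essentially re-derives the paper's \fullref{Lemmata}{lem:union} and~\ref{lem:countableunion}: each $X_b$ is a disjoint union of countably many copies of the $b$-element structure with the full relation, and $X_\infty$ is a finite union of single infinite classes, so you could have shortened this half considerably by citing those lemmata. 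One quibble: your claim that the two conditions force $X$ to be countable is false as stated (a single infinite class could be uncountable); countability is an implicit standing hypothesis needed for the ``if'' direction, not a consequence of the hypotheses, and it would be cleaner to say so than to assert it follows.
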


The \defterm{disjoint union} of a family of structures $\mathcal{S}^{(i)} =
(S^{(i)},\sigma_1^{(i)},\ldots,\sigma_n^{(i)})$ with the same
signature (where $i$ ranges over an index set $I$) is the structure
\[
\bigl(\bigsqcup_{i \in I} S^{(i)},\bigsqcup_{i \in I} \sigma_1^{(i)},\ldots,\bigsqcup_{i \in I} \sigma_n^{(i)}\bigr),
\]
where $\sqcup$ denotes disjoint union as sets.

\begin{lemma}[{\cite[Proposition~7.6(ii)]{blumensath_diploma}}]
\label{lem:union}
The disjoint union of two unary FA-pre\-sent\-a\-ble structures with the
same signature is unary FA-pre\-sent\-a\-ble.
\end{lemma}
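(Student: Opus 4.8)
The plan is to merge unary presentations of the two structures into a single presentation over a one-letter alphabet, using the parity of a word's length to encode which of the two components the corresponding element lies in. Write the two structures as $\mathcal{S}^{(1)} = (S^{(1)},\sigma_1^{(1)},\ldots,\sigma_n^{(1)})$ and $\mathcal{S}^{(2)} = (S^{(2)},\sigma_1^{(2)},\ldots,\sigma_n^{(2)})$, and fix unary FA-presentations $(L_1,\phi_1)$ and $(L_2,\phi_2)$ for them, both over the alphabet $\{a\}$. Put
\[
L = \{a^{2m} : a^m \in L_1\} \cup \{a^{2m+1} : a^m \in L_2\},
\]
and define $\phi : L \to S^{(1)} \sqcup S^{(2)}$ by $a^{2m}\phi = a^m\phi_1$ and $a^{2m+1}\phi = a^m\phi_2$. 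Then $\phi$ is surjective because $\phi_1$ and $\phi_2$ are. It remains to check that $L$ is regular and that, for each combined relation $\sigma_l = \sigma_l^{(1)} \sqcup \sigma_l^{(2)}$ and for equality, the relations $\Lambda(\sigma_l,\phi)$ and $\Lambda(=,\phi)$ are regular. (When both structures are infinite one could instead apply \fullref{Theorem}{thm:allwords} to take $L_1 = L_2 = a^*$ with $\phi_1,\phi_2$ injective, so that $L = a^*$ and equality becomes trivial; but the construction above requires no such assumption.)

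Everything reduces to a single observation about regular relations on $a^*$. For $k \ge 1$ define the coordinatewise maps $D_k, S_k : (a^*)^k \to (a^*)^k$ by $D_k(a^{m_1},\ldots,a^{m_k}) = (a^{2m_1},\ldots,a^{2m_k})$ and $S_k(a^{m_1},\ldots,a^{m_k}) = (a^{2m_1+1},\ldots,a^{2m_k+1})$. The key claim is that $D_k R$ and $S_k R$ are regular whenever $R \subseteq (a^*)^k$ is regular. Grant this for the moment. Applying the case $k = 1$ to $L_1$ and $L_2$ shows $L = D_1 L_1 \cup S_1 L_2$ is regular. Next, writing $r_l$ for the arity of $\sigma_l$, observe that a tuple of words from $L$ satisfies $\sigma_l$ in the disjoint union precisely when either all of its entries have even length and halving every entry's length yields a tuple in $\Lambda(\sigma_l^{(1)},\phi_1)$, or all of its entries have odd length and subtracting $1$ from and then halving every entry's length yields a tuple in $\Lambda(\sigma_l^{(2)},\phi_2)$; a tuple of mixed parity maps under $\phi$ to a tuple with entries in both $S^{(1)}$ and $S^{(2)}$, and $\sigma_l$, being a disjoint union of relations, relates no such tuple. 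Hence $\Lambda(\sigma_l,\phi) = D_{r_l}\bigl(\Lambda(\sigma_l^{(1)},\phi_1)\bigr) \cup S_{r_l}\bigl(\Lambda(\sigma_l^{(2)},\phi_2)\bigr)$ is a union of two regular relations, and similarly $\Lambda(=,\phi) = D_2\bigl(\Lambda(=,\phi_1)\bigr) \cup S_2\bigl(\Lambda(=,\phi_2)\bigr)$.

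To prove the claim, pass to convolutions, where the fiddly bookkeeping with the padding symbol $\$$ resides. By hypothesis $\conv R$ is a regular language over $(\{a\}\cup\{\$\})^k$; in each of its words the $a$'s precede the $\$$'s in every one of the $k$ tracks, and concretely $\conv(a^{m_1},\ldots,a^{m_k})$ has length $\max_i m_i$ and carries an $a$ in track $i$ at exactly positions $1,\ldots,m_i$. Let $h$ be the monoid morphism on words over $(\{a\}\cup\{\$\})^k$ defined by $h(v) = vv$ for each letter $v$. Doubling each $m_i$ doubles, in each track, the initial block of positions carrying $a$; comparing the letters at positions $2j-1$ and $2j$ shows that $h(\conv(a^{m_1},\ldots,a^{m_k})) = \conv(a^{2m_1},\ldots,a^{2m_k})$ for every tuple, so $\conv(D_k R) = h(\conv R)$, which is regular because morphic images of regular languages are regular. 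For $S_k$ a similar comparison gives $\conv(a^{2m_1+1},\ldots,a^{2m_k+1}) = e\,h(\conv(a^{m_1},\ldots,a^{m_k}))$, where $e$ is the single letter carrying $a$ in every track, so $\conv(S_k R) = \{e\} \cdot h(\conv R)$ is again regular. I expect the only genuine obstacle to be checking these two coordinatewise identities carefully --- in particular the degenerate cases where some $m_i = 0$ and where the tracks have unequal lengths --- after which the lemma follows at once.
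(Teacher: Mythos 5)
Your proof is correct. Note that the paper itself gives no proof of this lemma: it is quoted from Blumensath's thesis (Proposition~7.6(ii)), so there is nothing internal to compare against. Your parity-interleaving construction is the standard argument for this fact, and the one delicate point --- that the letter-doubling morphism $h$ and the prefixed all-$a$ letter $e$ interact correctly with $\conv$, including when some $m_i = 0$ or the tracks have unequal lengths --- is handled correctly: in track $i$ the $a$-block of length $m_i$ becomes one of length $2m_i$ (respectively $2m_i+1$), exactly as required. The reduction of $\Lambda(\sigma_l,\phi)$ and $\Lambda(=,\phi)$ to the images $D_{r}$ and $S_{r}$ of the component relations, using that a disjoint union of relations never relates a mixed-parity tuple, is also sound.
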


\begin{lemma}
\label{lem:countableunion}
The disjoint union of countably many isomorphic copies of a finite
structure is unary FA-pre\-sent\-a\-ble.
\end{lemma}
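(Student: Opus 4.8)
The plan is to exhibit an explicit injective unary FA-presentation whose language of representatives is the whole of $a^*$. Write $\mathcal{F}=(F,\sigma_1,\ldots,\sigma_n)$ for the finite structure and set $k=|F|$; the case $k=0$ is trivial, so assume $k\geq 1$ and fix an enumeration $F=\{f_0,\ldots,f_{k-1}\}$. Identify the domain of $\bigsqcup_{i\in\nset}\mathcal{F}^{(i)}$ with $F\times\nset$ in the obvious way, so that a relation $\sigma_j$ (of arity $r$, say) holds of a tuple if and only if all its entries have the same second coordinate and the tuple of first coordinates lies in $\sigma_j^{\mathcal{F}}$. Define $\phi\colon a^*\to F\times\nset$ by $a^m\mapsto(f_{m\bmod k},\floor{m/k})$; the idea is that the word $a^m$ names the $(m\bmod k)$-th element of the $\floor{m/k}$-th copy of $\mathcal{F}$. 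Since $\phi$ is a bijection, $\Lambda(=,\phi)$ is the diagonal of $(a^*)^2$ and so is regular, and the remaining task is to show that each $\Lambda(\sigma_j,\phi)$ is regular.

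For this I would first unwind the definitions to see that $(a^{m_1},\ldots,a^{m_r})\in\Lambda(\sigma_j,\phi)$ if and only if $\floor{m_1/k}=\cdots=\floor{m_r/k}$ and $(f_{m_1\bmod k},\ldots,f_{m_r\bmod k})\in\sigma_j^{\mathcal{F}}$. Hence $\Lambda(\sigma_j,\phi)$ is the finite union, over the finitely many residue tuples $(c_1,\ldots,c_r)\in\{0,\ldots,k-1\}^r$ with $(f_{c_1},\ldots,f_{c_r})\in\sigma_j^{\mathcal{F}}$, of the sets $\{(a^{qk+c_1},\ldots,a^{qk+c_r}):q\in\nset\}$. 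Since regular languages are closed under finite union, it suffices to check that the convolution of each such set is regular. Here one notes that, the residues $c_i$ being fixed, the lengths $qk+c_i$ all lie in the block $\{qk,qk+1,\ldots,qk+k-1\}$ and so differ pairwise by less than $k$; consequently $\conv(a^{qk+c_1},\ldots,a^{qk+c_r})$ consists of $qk+\min_i c_i$ copies of the letter $(a,\ldots,a)$ followed by a suffix $u$ of length $\max_i c_i-\min_i c_i$ that does not depend on $q$. Letting $q$ range over $\nset$ then yields the language $\bigl((a,\ldots,a)^k\bigr)^*(a,\ldots,a)^{\min_i c_i}u$, which is plainly regular. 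This shows that $(a^*,\phi)$ is the required unary FA-presentation.

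There is no serious obstacle here: the construction is just the natural idea of interleaving the copies of $\mathcal{F}$ according to residue class modulo $k$, and the only point needing a moment's thought is why the convolutions remain regular — namely the arithmetic-progression structure of the word lengths that results from confining the arguments of a relation to a single block of $k$ consecutive lengths. The argument works verbatim for relations of any fixed arity, and specializes correctly in the degenerate cases $k=1$ and $\sigma_j^{\mathcal{F}}=\emptyset$.
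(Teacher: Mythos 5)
Your proposal is correct and follows essentially the same route as the paper: both interleave the copies by residue class modulo $|F|$, mapping $a^{qk+c}$ to the $c$-th element of the $q$-th copy, and both observe that the resulting relation languages are obtained by prefixing the (finite, hence regular) relation of a single copy with powers of the all-$a$ convolution letter repeated $k$ times. The only difference is that you spell out the residue-tuple decomposition and the shape of the convolutions explicitly, where the paper compresses this into the single identity $\Lambda(\sigma,\phi)=(a^n,a^n)^*\Lambda(\sigma,\psi)$.
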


\begin{proof}
Let $\mathcal{S}$ be a finite structure. Suppose the domain $S$ of
$\mathcal{S}$ contains $n$ elements. Then there is a bijection $\psi :
\{a^0,\ldots,a^{n-1}\} \to S$. For any relation $\sigma$ of
$\mathcal{S}$, the relation $\Lambda(\sigma,\psi)$ is finite and thus
regular.

Define a map $\phi$ from $a^*$ to the (domain of) the disjoint union
of countably many copies of $\mathcal{S}$ by letting $a^{ni+j}\phi$ be
the element corresponding to $a^j\psi$ in the $i$-th copy of
$\mathcal{S}$, where $0 \leq j < n$. Then for any relation $\sigma$ of the disjoint union,
\[
\Lambda(\sigma,\phi) = (a^n,a^n)^*\Lambda(\sigma,\psi),
\]
and so is regular. Thus $(a^*,\phi)$ is a unary FA-pre\-sent\-a\-tion for the
the disjoint union of countably many copies of $\mathcal{S}$
\end{proof}

\section{Pumping and diagrams}
\label{sec:pumping}

This section develops a diagrammatic representation for unary
FA-pre\-sent\-a\-tions. Although we only discuss how this
representation works for binary relations, it also applies more
generally to unary FA-presentations for arbitrary relational structures;
see \cite[\S~4]{cr_subalg} for details.

Let $\mathcal{S}$ be a structure with a binary relation $R$, and
suppose $\mathcal{S}$ is unary FA-presentable. By
\fullref{Theorem}{thm:allwords}, there is an injective unary
FA-presentable structure $(a^*,\phi)$ for $\mathcal{S}$. Let $\fsa{A}$
be a deterministic $2$-tape synchronous automaton recognizing
$\Lambda(R,\phi)$. Let us examine the structure of the automaton
$\fsa{A}$. For ease of explanation, view $\fsa{A}$ as a directed graph
with no failure states: $\fsa{A}$ fails if it is in a state and reads
a symbol that does not label any outgoing edge from that state.

Since $\fsa{A}$ recognizes words in $\conv((a^*)^2)$, it will
only successfully read words lying in $(a,a)^*\bigl((a,\$)^* \cup
(\$,a)^*\bigr)$. Thus an edge labelled by $(a,a)$ leads to a state
whose outgoing edges can have labels $(a,a)$, $(a,\$)$,
$(\$,a)$. However, an edge labelled by $(a,\$)$ leads to a state all
of whose outgoing edges are labelled by $(a,\$)$. In fact, the
determinism of $\fsa{A}$ ensures there is at most one such outgoing
edge. Similarly, an edge labelled by $(\$,a)$ leads to a state with
either no outgoing edges or a single outgoing edge labelled by
$(\$,a)$.

Since $\fsa{A}$ is deterministic, while it successfully reads pairs
$(a,a)$ it follows a uniquely determined path which, if the string of
such pairs is long enough, will form a uniquely determined loop. This
loop, if it exists, is simple. From various points along this loop and
the path leading to it, paths labelled by $(a,\$)$ and $(\$,a)$ may
`branch off'. In turn, these paths, if they are long enough, lead into
uniquely determined simple loops. \fullref{Figure}{fig:unaryautomaton}
shows an example.

\begin{figure}[tb]
\centerline{\includegraphics{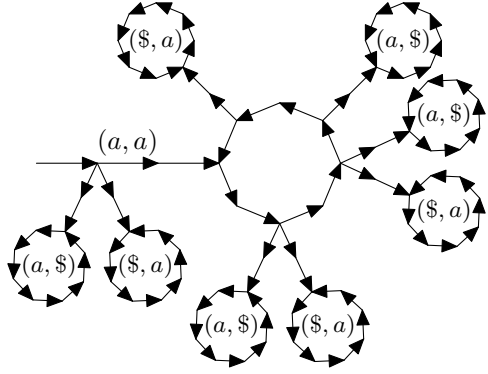}}
\caption{Example two-tape automaton recognizing
  $\Lambda(R,\phi)$. Edges labelled $(a,a)$ form a path that leads
  into a uniquely determined loop. From this path and loop paths
  labelled by $(a,\$)$ or $(\$,a)$ branch off.}
\label{fig:unaryautomaton}
\end{figure}

Let $D$ be a multiple of the lengths of the loops in $\fsa{A}$
that also exceeds the number of states in $\fsa{A}$.

Let $\fsa{A}$ have initial state $q_0$ and transition function
$\delta$. Consider a word $uvw \in \conv L(\fsa{A})$, where $v =
b^\beta$ for some $b \in \{a,\$\}^{2}$ and $\beta \geq D$. Suppose
that $(q_0,u)\delta = q$. When $\fsa{A}$ is in state $q$ and reads
$v$, it completes a loop before finishing reading $v$. So $v$
factorizes as $v'v''v'''$, with $|v''| > 0$, such that $(q,v')\delta =
(q,v'v'')\delta = q'$. Assume that $|v'|$ is minimal, so that $q'$ is
the first state on the loop that $\fsa{A}$ encounters while reading
$v$. Assume further that $|v''|$ is minimal, so that $\fsa{A}$ makes
exactly one circuit around the loop while reading $v''$. Now, by
definition, $D$ is a multiple of $|v''|$. Let $m = D/|v''|$. So
$|v(v'')^{m+1}v'''| = |v| + D$. By the pumping lemma,
$uv'(v'')^{m+1}v'''w \in \conv L(\fsa{A})$.

Consider what this means in terms of the pair $\vec{p} =
(a^{p_1},a^{p_2})$ such that $\conv(\vec{p}) = uvw$. Since $v' \in
b^*$, it follows that $uv'(v'')^{m+1}v'''w = \conv(a^{p_1+q_1},a^{p_{2}+q_{2}})$, where
\[
q_j = \begin{cases} 0 & \text{if $p_j \leq |u|$} \\
D & \text{if $p_j \geq |uv|$}.
\end{cases}
\]
(Note that either $p_j \leq |u|$ or $p_j \geq |uv|$ since $v \in b^*$
for a fixed pair $b \in \{a,\$\}^{2}$.) Therefore we have the following:

\begin{pumprule}
\label{rule:up}
If the components of a pair in $\Lambda(R,\phi)$ can be
partitioned into those that are of length less than $l \in
\nset$ and those that have length at least $l + D$, then [the word
encoding] this pair can be pumped so as to increase by $D$ the
lengths of those components that are at least $l + D$ letters
long and yield another [word encoding a] pair in $\Lambda(R,\phi)$.
\end{pumprule}

(Notice that this also applies when both components have length
at least $D$; in this case, set $l = 0$.) 

With the same setup as above, suppose $|v| \geq 2D$. Then $\fsa{A}_i$
must follow the loop labelled by $v''$ starting at $q'$ at least
$m = D/|v''|$ times. That is, $v$ factorizes as
$v'(v'')^m\tilde{v}'''$. By the pumping lemma, $uv'\tilde{v}''' \in
\conv L(\fsa{A}_i)$ and $|v'\tilde{v}'''| = |v| - D$. Therefore, we also
have the following:

\begin{pumprule}
\label{rule:down}
If the components of a pair in $\Lambda(R,\phi)$ can be divided
into those that are of length less than $l \in \nset$ and those
that have length at least $l + 2D$, then [the word encoding] this
pair can be pumped so as to decrease by $D$ the length of those
components that are at least $l + 2D$ letters long and yield
another [word encoding a] pair in $\Lambda(R,\phi)$.
\end{pumprule}

This ability to pump so as to increase or decrease lengths of
components of a pair by a constant $D$ lends itself to a very useful
diagrammatic representation of the unary FA-pre\-sent\-a\-tion
$(a^*,\phi)$. Consider a grid of $D$ rows and infinitely many
columns. The rows, from bottom to top, are $B[0],\ldots,B[D-1]$. The
columns, starting from the left, are $C[0],C[1],\ldots$. The point in
column $C[x]$ and row $B[y]$ corresponds to the word $a^{xD+y}$. For
example, in the following diagram, the distinguished point is in
column $C[3]$ and row $B[2]$ and so corresponds to $a^{3D+2}$:

\inpargraphic{\jobname-pumpingdiagram1.eps}

The power of such diagrams is due to a natural correspondence between
pumping as in \fullref{Pumping rules}{rule:up} and~\ref{rule:down} and certain
simple manipulations of pairs of points in the diagram. Before describing this
correspondence, we must set up some notation. We will not distinguish
between a point in the grid and the word to which it corresponds. The
columns are ordered in the obvious way, with $C[x] < C[x']$ if and
only if $x < x'$.

For any element $u \in a^*$, let $b(u)$ be the index of the row
containing $u$ and let $c(u)$ be the index of the column containing
$u$. For brevity, write $B[u]$ for $B[b(u)]$ and $C[u]$ for $C[c(u)]$.
Extend the notation for intervals on $\nset$ to intervals of
contiguous columns. For example, for $x,x' \in \nset$ with $x \leq
x'$, let $C[x,x')$ denotes the set of elements in columns
  $C[x],\ldots,C[x'-1]$, and $C(x,\infty)$ denotes the set of
  elements in columns $C[x+1],C[x+2],\ldots$.

Define for every $n \in \zset$ a partial map $\tau_n
: a^* \to a^*$, where $a^k\tau_n$ is defined to be $a^{k+nD}$ if $k+nD
\geq 0$ and is otherwise undefined. Notice that if $n \geq 0$, the map
$\tau_n$ is defined everywhere. In terms of the diagram, $a^k\tau_n$
is the element obtained by shifting $a^k$ to the right by $n$ columns
if $n \geq 0$ and to the left by $-n$ columns if $n < 0$. The values
of $k$ and $n < 0$ for which $a^k\tau_n$ are undefined are precisely
those where shifting $a^k$ to the left by $-n$ columns would carry it
beyond the left-hand edge of the diagram.

Now, elements of $\Lambda(R,\phi)$ are pairs of
words, and thus can be viewed as arrows in the diagram.

Consider a pair $(p,q)$ in $\Lambda(R,\phi)$, viewed as an arrow in
the diagram. If the arrow $(p,q)$ neither starts nor ends in $C[0]$,
so that both $p$ and $q$ are of length at least $D$ and so the word
encoding $(p,q)$ can be pumped before both components in accordance
with \fullref{Pumping rule}{rule:up}. This corresponds to shifting
both components rightwards by one column. This rightward shifting of
components can be iterated arbitrarily many times to yield new arrows.

Similarly, if the arrow $(p,q)$ starts and ends in non-adjacent
columns, then $p$ and $q$ differ in length by at least $D$ and hence
the word encoding $(p,q)$ can be pumped between these two components
in accordance with \fullref{Pumping rule}{rule:up}. This corresponds
to shifting the rightmost of $p$ or $q$ rightwards by one column.
This rightward shifting of arrows can be iterated arbitrarily many
times to yield new arrows.

Hence we have the following diagrammatic version of \fullref{Pumping
  rule}{rule:up}:

\begin{arrowrule}
\label{arrowrule:up}
Consider an element $(p,q)$ of $\Lambda(R,\phi)$, viewed as an arrow
in the diagram.
\begin{enumerate}

\item If $(p,q)$ neither starts nor ends in $C[0]$, then for any $k
  \in \nset$ the arrow $(p\tau_k,q\tau_k)$ obtained by shifting $(p,q)$ right
  by $k$ columns also lies in $\Lambda(R,\phi)$.

\item If $c(p) > c(q) + 1$, then for any $k \in \nset$ the arrow
  $(p\tau_k,q)$ obtained by shifting $p$ right by $k$ columns also
  lies in $\Lambda(R,\phi)$.

\item If $c(q) > c(p) + 1$, then for any $k \in \nset$ the arrow
  $(p,q\tau_k)$ obtained by shifting $q$ right by $k$ columns also
  lies in $\Lambda(R,\phi)$.

\end{enumerate}

\end{arrowrule}

On the other hand, if the arrow $(p,q)$ neither starts nor ends in
$C[0,1]$, then $p$ and $q$ are both of length at least $2D$ and so the
word encoding $(p,q)$ can be pumped before both components in
accordance with \fullref{Pumping rule}{rule:down}. This corresponds to
shifting both components leftwards by one column. This leftward
shifting can be iterated to yield new arrows for as long as neither
$p$ nor $q$ lies in $C[0,1]$.

Similarly, if $c(p)$ and $c(q)$ differ by at least $2$, then $p$ and
$q$ differ in length by at least $2D$ and hence the word encoding
$(p,q)$ can be pumped between these two components in accordance with
\fullref{Pumping rule}{rule:down}. This corresponds to shifting the
rightmost of $p$ or $q$ leftwards by one column.  This leftward shift
of one end of the arrow can be iterated to yield new arrows for as
long as there are at least two columns between $p$ and $q$.

Hence we have the following diagrammatic version of \fullref{Pumping
  rule}{rule:down}:

\begin{arrowrule}
\label{arrowrule:down}
Consider an element $(p,q)$ of $\Lambda(R,\phi)$ viewed as an arrow
in the diagram.
\begin{enumerate}

\item If $(p,q) \in C[h,\infty)$, then for any $k \in \nset$ with
  $0 < k < h$ the arrow $(p\tau_{-k},q\tau_{-k})$ obtained by shifting
  $(p,q)$ left by $k$ columns also lies in $\Lambda(R,\phi)$.

\item If $c(p) > c(q) + h$, then for any $k \in \nset$ with
  $0 < k < h$ the arrow $(p\tau_{-k},q)$ obtained by shifting
  $p$ left by $k$ columns also lies in $\Lambda(R,\phi)$.

\item If $c(q) > c(p) + h$, then for any $k \in \nset$ with
  $0 < k < h$ the arrow $(p,q\tau_{-k})$ obtained by shifting
  $q$ left by $k$ columns also lies in $\Lambda(R,\phi)$.

\end{enumerate}
\end{arrowrule}

An arrow from $p$ to $q$, where $|c(p)-c(q)| \leq 1$ is called a
\defterm{short arrow}. Any other arrow is called a \defterm{long
  arrow}. In both \fullref{Arrow rules}{arrowrule:up} and
\ref{arrowrule:down}, case~1 applies to both long and short arrows;
cases~2 and~3 apply only to long arrows. Hence, as shown in
\fullref{Figure}{fig:arrowrules}, the two ends of a short arrow
maintain the same relative position when \fullref{Arrow
  rules}{arrowrule:up} and \ref{arrowrule:down} are applied; those of
a long arrow need not.

\begin{figure}[tb]
\centerline{\includegraphics{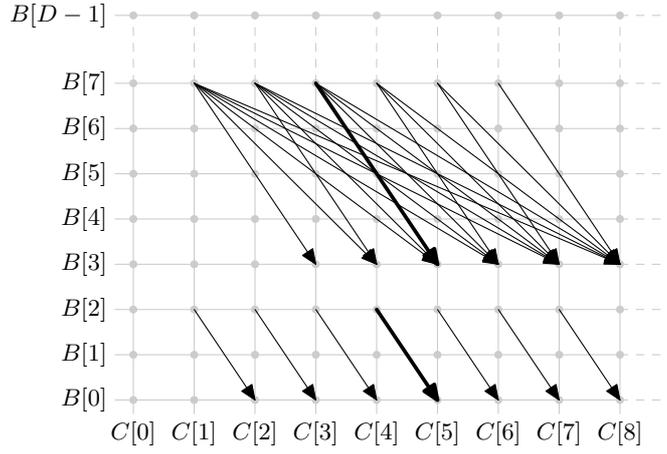}}
\caption{Arrows arising from a long arrow and a short arrow via
  \fullref{Arrow rules}{arrowrule:up} and \ref{arrowrule:down}.}
\label{fig:arrowrules}
\end{figure}

\section{Transitive closure}

Consider an FA-pre\-sent\-a\-tion $(L,\phi)$ for a relational
structure $\mathcal{S}$ that includes a binary relation $R$. Let $R_1$
and $R_2$ be, respectively, the reflexive and symmetric closures of
$R$:
\begin{align*}
(x,y) \in R_1 &\iff ((x,y) \in R) \lor (x = y); \\
(x,y) \in R_2 &\iff ((x,y) \in R) \lor ((y,x) \in R).
\end{align*}
Then since $R_1$ and $R_2$ are defined by first-order formulae,
\fullref{Proposition}{prop:firstorderreg} shows that
$\conv\Lambda(R_1,\phi)$ and $\conv\Lambda(R_2,\phi)$ are regular.

In contrast, the transitive closure $R^+$ is not defined by a
first-order formula and so \fullref{Proposition}{prop:firstorderreg}
does not apply. Indeed, since there exist FA-pre\-sent\-a\-ble directed
graphs where reachability is undecidable \cite[Examples~2.4(iv)
  \&~3.17]{rubin_survey}, $\conv\Lambda(R^+,\phi)$ is not regular
in the case of general FA-pre\-sent\-a\-tions.

However, this section is dedicated to showing that, in the particular
case of unary FA-pre\-sent\-a\-ble structures,
$\conv\Lambda(R^+,\phi)$ \emph{i}s
regular. \fullref{Theorem}{thm:reftransclosure} below proves that
$\conv\Lambda(R^*,\phi)$ is regular, where $R^*$ is the reflexive and
transitive closure of $R$. The corresponding result for $R^+$ can be
proved by a similar method. Notice that
\fullref{Theorem}{thm:reftransclosure} considerably strengthens both
\cite[Lemma~7.10]{blumensath_diploma}, which essentially states (in
different language) that the reflexive and transitive closure of a
unary FA-presentable \emph{unary function} is unary FA-presentable,
and \cite[Corollary~7.6]{khoussainov_unarygraphalgorithmic}, which
states that in a unary FA-presentable undirected graph of finite
degree, the reachability relation is regular.

\begin{theorem}
\label{thm:reftransclosure}
Let $\mathcal{S}$ be a structure admitting an injective unary FA-pre\-sent\-a\-tion
$(a^*,\phi)$. Let $R$ be some binary relation in the signature of
$\mathcal{S}$. Then $\Lambda(R^*,\phi)$ is regular, where $R^*$
denotes the reflexive and transitive closure of $R$. Hence $\mathcal{S}$ augmented
by $R^*$ is also unary FA-pre\-sent\-a\-ble.
\end{theorem}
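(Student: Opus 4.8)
The plan is to work entirely in the grid-diagram picture from \fullref{\S}{sec:pumping}, and to show that the relation $R^*$ is determined by a bounded amount of "local" information about each of its pairs $(p,q)$ — namely, the columns and rows of $p$ and $q$ when these are small, plus, when $p$ and $q$ are "far apart" or "far to the right", only the rows $b(p)$, $b(q)$ and the sign of $c(p)-c(q)$ — together with finitely much reachability data that can itself be precomputed. The key point is that the pumping/arrow rules let us slide chains of $R$-edges around, so that $R$-reachability is essentially preserved under column shifts; hence $\Lambda(R^*,\phi)$ should be a finite union of sets, each of which is either a finite relation or has one of the "eventually periodic in the columns" shapes that are patently regular (for instance, sets of the form $\{(p,q) : b(p)=i,\ b(q)=j,\ c(q)\ge c(p)+N\}$ and their translates, which have regular $\conv$-images).

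First I would set up the reachability relation generated by short and long arrows separately. For short arrows, \fullref{Arrow rule}{arrowrule:up}(1) and \fullref{Arrow rule}{arrowrule:down}(1) say that a short arrow from row $i$, column $x$ to row $j$, column $x+\epsilon$ (with $\epsilon\in\{-1,0,1\}$) can be shifted freely to the right and freely to the left until it hits column $C[0,1]$. So if there is one such short arrow with $c(p)$ sufficiently large, there is one for \emph{every} sufficiently large column; and concatenating these gives long "ladders" of $R$-edges that move steadily left or right along the grid, changing row in a bounded, eventually periodic pattern. I would isolate the finite "short-arrow graph" on the $D$ rows (with the three column-offsets as extra data) and observe that $R^*$-paths built purely from short arrows among columns $\ge 1$ have lengths/row-patterns governed by paths in this finite graph — so the set of pairs $(p,q)$ joined by such a path is regular by a routine automaton construction on the grid coordinates.

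Next I would fold in the long arrows. A long arrow, by \fullref{Arrow rule}{arrowrule:up}(2,3) and \fullref{Arrow rule}{arrowrule:down}(2,3), can have one endpoint dragged arbitrarily far right, and dragged left until the two columns are within distance $1$; so the \emph{existence} of a long arrow between rows $i$ and $j$ with $c(q) - c(p)$ large (say) immediately gives a long arrow between those rows at every sufficiently large gap, and also a short arrow from row $i$ to row $j$ with offset $+1$. Thus, modulo a bounded set of exceptional small-column/small-gap cases (of which there are only finitely many, each contributing a finite — hence regular — piece to $\Lambda(R^*,\phi)$), the long arrows contribute nothing that the short-arrow analysis did not already contribute: every long arrow degenerates, for reachability purposes, to a short arrow. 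Combining this with the previous step, $R^*$ restricted to pairs with both coordinates large is a regular subset of the grid, determined by reachability in the finite short-arrow graph together with a finite set of "booster" long arrows that can reach into the small columns; the remaining finitely many pairs involving $C[0],\ldots,C[N]$ are handled by brute force, and a final application of \fullref{Lemma}{lem:union} (or directly, closure of regular languages under finite union) assembles $\Lambda(R^*,\phi)$. The reflexive closure is free since the diagonal is regular. Finally, regularity of $\Lambda(R^*,\phi)$ means $\mathcal{S}$ augmented by $R^*$ still satisfies the definition of an FA-presentation over $a^*$, so it is unary FA-presentable.

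The main obstacle I anticipate is the bookkeeping around the "boundary" columns $C[0]$ and $C[0,1]$: the arrow rules fail there, so a path in the full relation $R^*$ may genuinely dip into the first column or two and come back out, and one must argue that doing so buys nothing beyond a bounded correction — equivalently, that any $R^*$-path can be rerouted to avoid the small columns (except for finitely many source/target pairs that actually live there) at the cost of replacing long arrows by the short arrows they degenerate to. Making that rerouting argument precise, and bounding the constant $N$ beyond which everything becomes periodic in the column index in terms of $D$ and the number of states, is the technical heart of the proof; once that is in hand, the regularity of each resulting piece is a standard check on the grid.
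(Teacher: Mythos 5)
Your high-level picture --- that $\Lambda(R^*,\phi)$ should be a finite union of boundary pieces and column-periodic pieces, with the difficulty concentrated near the small columns --- matches the shape of the paper's argument, but two essential steps are missing or wrong, and they are exactly where the work lies. First, the claim that a long arrow can be ``dragged left until the two columns are within distance $1$'' and hence ``degenerates, for reachability purposes, to a short arrow'' is false: \fullref{Arrow rule}{arrowrule:down}(2,3) only ever shrinks the gap to $2$, and, more seriously, a long arrow from $p$ gives one-step access to a fixed row in \emph{every} sufficiently large column, which no fixed-offset arrow can simulate. (Take $R$ to consist of a single long-arrow family from row $1$ to row $2$ and nothing else: reachability contains $\bigl\{\bigl((1,x),(2,y)\bigr):y\ge x+2\bigr\}$, whereas the ``degenerated'' fixed-offset arrows would reach only one column from each source, and there are no further arrows to continue with.) So long arrows cannot be eliminated; they must be carried through the analysis, which is why the paper's \fullref{Lemma}{lem:sthreesfour} has a delicate sub-case that iteratively shortens the first long arrow of a path.

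Second, and more fundamentally, the assertion that short-arrow reachability ``is regular by a routine automaton construction on the grid coordinates'' hides the central problem: shift-invariance of individual arrows does not transfer to shift-invariance of \emph{paths}, because a witnessing path from $p$ to $q$ may overshoot far to the right of both endpoints, backtrack, and interleave long and short arrows, and an automaton reading $\conv(p,q)$ sees only the two lengths, not the intermediate vertices. The paper needs two nontrivial devices here that your proposal does not supply: (i) a normalization showing every path decomposes as a descent to its minimal column, a bounded excursion there, and an ascent, where the ascent and descent can be replaced by chains of $R$-paths of bounded length whose endpoints have strictly monotone word-lengths, each hop advancing at most $2D+2$ columns (\fullref{Lemma}{lem:nearcolumnpaths} and \fullref{Lemma}{lem:sthreesfour}, proved by induction on column distance with a pigeonhole-and-reroute argument); and (ii) an automaton recognizing such unboundedly long monotone chains of a regular relation (\fullref{Lemma}{lem:s3regular}, built by running two copies of the automaton for the bounded-hop relation and splicing accepting runs with $\emptyword$-transitions --- possible precisely because the chain is monotone in length). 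Without (i) and (ii), ``the regularity of each resulting piece is a standard check'' is not justified; with them, your outline essentially becomes the paper's proof.
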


\begin{proof}
Suppose the diagram for $(a^*,\phi)$ has $D$ rows. The relation $R$ is
binary, so elements of $\Lambda(R,\phi)$ may be viewed as arrows
between points in the diagram. Two points will then lie in
$\Lambda(R^*,\phi)$ if and only if they are linked by a directed path
(possibly of length zero). Thus, throughout the
proof, we will reason mainly about arrows and directed paths.

Informally, the overall strategy is to break up a path from $p$ to $q$
into three parts: a subpath from $p$ to the minimum column the path
visits, a subpath from one vertex of this column to another, and a
subpath from this column to $q$, and then to replace these subpaths
with subpaths that are either short or that can be broken into
segments between columns at most $2D+2$ apart in a way that makes the
replacement subpath recognizable by an automaton. This constant $2D+2$
becomes vital only later, in a pumping argument in the proof of
\fullref{Lemma}{lem:sthreesfour} below, but it is introduced
immediately because certain other constants are defined in terms of
it.

For every $p,q \in C[0,2D+2]$ such that there is a directed path
from $p$ to $q$, fix some such path $\alpha_{p,q}$. Let
$k_1$ be the maximum of the lengths of the various paths
$\alpha_{p,q}$.

\begin{lemma}
\label{lem:nearcolumnpaths}
There is a constant $k_2$ with the following property: for all points
$p$ and $q$ with $|c(p)- c(q)| \leq 2D+2$, if there is a directed path
from $p$ to $q$ that does not visit the column $C[0]$, then there is a
directed path from $p$ to $q$ of length at most $k_2$ that does not visit
$C[0]$.
\end{lemma}

\begin{proof}
For convenience, let $h = 2D+2$. Consider all pairs of distinct points
$p'$, $q'$ in $C[1,h+1]$. For each such pair, consider whether
there is some $i \in \nset^0$ such that there is a directed
path from $p'\tau_i$ to $q'\tau_i$ that does not visit
$C[0]$. If some such $i$ exists, let $i(p',q')$ be the minimum such
$i$ and let $\alpha_{p',q'}$ be a directed path from
$p\tau_{i(p',q')}$ to $q\tau_{i(p',q')}$. Let $k_2$ be the maximum of
the lengths of the various paths $\alpha_{p',q'}$.

\begin{figure}[tb]
\centerline{\includegraphics{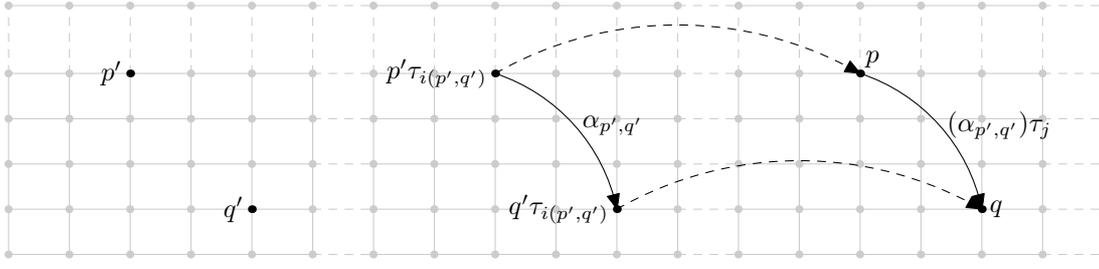}}
\caption{A path of length at most $k_2$ from $p$ to $q$ is obtained by
  shifting the path $\alpha_{p',q'}$ to the right by $j = c(p) -1
  -i(p',q')$ columns. Notice that the path $\alpha_{p',q'}$ may visit
  vertices to the left of $p\tau_{i(p',q')}$ or the right of
  $q\tau_{i(p',q')}$: the only thing that is guaranteed is that its
  length is at most $k_2$ and that it does not visit any point of
  $C[0]$.}
\label{fig:nearcolumnpaths}
\end{figure}

Now let $p$ and $q$ be arbitrary with $c(p) \leq c(q) \leq c(p) +
h$. The case where $c(q) \leq c(p) \leq c(q)+h$ is similar. Suppose
there is a directed path from $p$ to $q$ that does not visit
$C[0]$. Let $p' = p\tau_{-(c(p)-1)}$ and $q' =
q\tau_{-(c(p)-1)}$. Then $p'$ and $q'$ lie in $C[1,h+1]$. Since
there is a directed path from $p$ to $q$, the quantity
$i(p',q')$ and the path $\alpha_{p',q'}$ are defined. Furthermore, the
minimality of $i(p',q')$ ensures that $i(p',q') \leq c(p)-1$. Let $j =
c(p)-1-i(p',q')$. Then the directed path $(\alpha_{p',q'})\tau_{j}$ is
defined by \fullref{Arrow rule}{arrowrule:up} and runs from $p$ to
$q$. (See \fullref{Figure}{fig:nearcolumnpaths}.)  So there is a
directed path from $p$ to $q$ of length at most $k_2$ that does not visit $C[0]$.
\end{proof}

Let $k = \max\{k_1,k_2\}+1$. Let
\[
S = R^{\leq k} = E \cup R \cup (R \circ R) \cup (R \circ R \circ R) \cup \ldots \cup (\underbrace{R \circ \cdots \circ R}_{\text{$k$ times}}),
\]
where $E$ is the equality relation. Then $S$ is first-order definable
and so $\Lambda(S,\phi)$ is regular.

Define the relation
\begin{align}
V &= \bigl\{(t_0\phi,t_n\phi) : (\exists n \in \nset)(\exists t_1,\ldots,t_{n-1} \in a^*)(\forall i \in \{0,\ldots,n-1\}) \label{eq:transclosure1} \\
 &\qquad\qquad\qquad\qquad\qquad\bigl((|t_i| < |t_{i+1}| \land (t_i\phi,t_{i+1}\phi) \in S\bigr)\bigr\},\nonumber
\end{align}
and dually
\begin{align}
V' &= \bigl\{(t_0\phi,t_n\phi) : (\exists n \in \nset)(\exists t_1,\ldots,t_{n-1} \in a^*)(\forall i \in \{0,\ldots,n-1\}) \label{eq:transclosure2} \\
 &\qquad\qquad\qquad\qquad\qquad\bigl((|t_{i+1}| < |t_i| \land (t_i\phi,t_{i+1}\phi) \in S\bigr)\bigr\}.\nonumber
\end{align}
Notice that $V \subseteq R^+$ and $V' \subseteq R^+$, and that
\eqref{eq:transclosure1} and \eqref{eq:transclosure2} are not
first-order definitions because in the quantifications $(\exists
t_1,\ldots,t_{n-1})$, the number of variables is arbitrary. Notice
further that it follows immediately from their definitions that $V$ and $V'$ are
transitive.

[The definitions of $V$ and $V'$ could be formulated using quantification over
  the domain of $\mathcal{S}$, but \eqref{eq:transclosure1} and
  \eqref{eq:transclosure2} are notationally more useful since we will
  reason using the words $t_i \in a^*$.]

\begin{lemma}
\label{lem:s3regular}
The relations $\Lambda(V,\phi)$ and  $\Lambda(V',\phi)$ are regular.
\end{lemma}

\begin{proof}
We prove the regularity of $\Lambda(V,\phi)$; the argument for
$\Lambda(V',\phi)$ is symmetric.

Let $\fsa{A}$ be an automaton recognizing
$\conv\Lambda(S,\phi)$. Construct an automaton $\fsa{B}$ recognizing
$\conv\Lambda(V,\phi)$ as follows.

The automaton $\fsa{B}$ reads words of the form $\conv(a^m,a^{m+n})$
for $m \in \nset^0$, $n \in \nset$; that is, where the word on the
left-hand track is shorter than the one on the right-hand track. So
the automaton need only read input symbols from $\{(a,a),(\$,a)\}$.

The operation of the automaton $\fsa{B}$ consists in running two
copies $\fsa{A}_1$, $\fsa{A}_2$ of the automaton
$\fsa{A}$ simultaneously. The copy $\fsa{A}_1$ faithfully
simulates the operation of $\fsa{A}$. The second copy
$\fsa{A}_2$ always follows the $(a,a)$ transition, regardless of
whether the input is $(a,a)$ or $(\$,a)$. Furthermore, whenever
$\fsa{A}_1$ is in a state corresponding to an accept state $f$ of
$\fsa{A}$, and $\fsa{A}_2$ is in a state corresponding to
any state $q$ of $\fsa{A}$, the automaton $\fsa{B}$ can make
an $\emptyword$-transition so that both $\fsa{A}_1$ and
$\fsa{A}_2$ are in states corresponding to $q$. The accept
states of $\fsa{B}$ are those where $\fsa{A}_1$ is in a
state corresponding to an accept state of $\fsa{A}$, and
$\fsa{A}_2$ is in any state.

Prove that $L(\fsa{B}) \subseteq \conv\Lambda(V,\phi)$ as follows:
Suppose $\fsa{B}$ reads $h_0$ symbols $(a,a)$ before switching to
reading symbols $(\$,a)$. Suppose it makes $n-1$ of the
$\emptyword$-transitions described above, after having read a total of
$h_1$, $h_2$, \ldots, $h_{n-1}$ symbols $(\$,a)$, and that it accepts
after having read a total of $h_n$ symbols.

Now, if $\fsa{B}$ makes an $\emptyword$-transition so that $\fsa{A}_1$
and $\fsa{A}_2$ are both in state $q$, then if it makes another
$\emptyword$-transition without reading any input, then it does not
change its state, for both $\fsa{A}_1$ and $\fsa{A}_2$ continue to
have state $q$. Similarly, if $\fsa{B}$ makes an
$\emptyword$-transition while reading symbols $(a,a)$, it does not
change its state. Therefore $\fsa{B}$ can read the same input by not
making any $\emptyword$-transitions while reading symbols $(a,a)$, and
not making more than one consecutive $\emptyword$-transition. Hence we
can assume without loss that $h_0 < h_1 < \ldots < h_n$. For each $i
\in \{0,\ldots,n\}$, set $t_i = a^{h_i}$.

After having read the word $(a,a)^{h_0}(\$,a)^{h_1-h_0}$, the automaton
$\fsa{B}$ makes an $\emptyword$-transition. So, by the construction of
$\fsa{B}$, the first simulated copy of $\fsa{A}$ must have been in an
accept state. Hence $(a,a)^{h_0}(\$,a)^{h_1-h_0} \in L(\fsa{A})$ and so
$(t_0\phi,t_1\phi) \in S$.

Immediately after making the $\emptyword$-transition after having read
a total of $h_i$ symbols (where $i \in \{1,\ldots,n-1\}$), the first
simulated copy of $\fsa{A}$ is in the state $\fsa{A}$ enters after
having read $(a,a)^{h_i}$. The automaton $\fsa{B}$ reads
$(\$,a)^{h_{i+1}-h_i}$ and then makes another $\emptyword$-transition
or accepts; in either case the first simulated copy of $\fsa{A}$ is in
an accept state. Thus $\fsa{A}$ must accept
$(a,a)^{h_i}(\$,a)^{h_{i+1}-h_i} \in L(\fsa{A})$ and so
$(t_i\phi,t_{i+1}\phi) \in S$.

Thus $(t_i\phi,t_{i+1}\phi) \in S$ and $|t_i| < |t_{i+1}|$ for
all $i \in \{0,\ldots,n-1\}$. So $(t_0\phi,t_n\phi) \in V$.

To see that $\conv\Lambda(V,\phi) \subseteq L(\fsa{B})$,
proceed as follows. Let $(s_0\phi,s_n\phi) \in V$, and let
$t_1,\ldots,t_{n-1}$ be such that $(t_i\phi,t_{i+1}\phi) \in S$ and
$|t_i| < |t_{i+1}|$. Let $h_i = |t_i|$. Then
$\conv(a^{h_i},a^{h_{i+1}}) \in L(\fsa{A})$. Then $\fsa{B}$ can read
$(a,a)^{h_0}(\$,a)^{h_1-h_0}$, taking the first simulated copy of
$\fsa{A}$ to an accept state, then make an
$\emptyword$-transition. After the $i$-th $\emptyword$-transition, the
first simulated copy of $\fsa{A}$ is in the state $\fsa{A}$ would be
in after reading $(a,a)^{h_i}$. So reading $(\$,a)^{h_{i+1}-h_i}$
brings this copy of $\fsa{A}$ into an accept state, where $\fsa{B}$
can either make another transition or accept. Hence $\fsa{B}$ accepts
after reading $(a,a)^{h_0}(\$,a)^{h_n-h_0}$. So
$\conv\Lambda(V,\phi) \subseteq L(\fsa{B})$.
\end{proof}

Define $T$ to consist of all pairs $(p\phi,q\phi)$ such that there is
a directed path of non-zero length from $p$ to $q$ in which all
visited points except the first lie in $C(p,\infty)$. (This
entails $c(q) > c(p)$.)

Define $U$ to consist of all pairs $(p\phi,q\phi)$ such that there is
a directed path from $p$ to $q$ in which all visited points except the
last lie in $C(q,\infty)$. (This entails $c(p) > c(q)$.)

\begin{lemma}
\label{lem:sthreesfour}
$T \subseteq V$ and $U \subseteq V'$.
\end{lemma}

\begin{proof}
We prove that $T \subseteq V$. A symmetric argument proves $U \subseteq V'$.

We are required to show that $(p\phi,q\phi) \in T \implies
(p\phi,q\phi) \in V$. We will prove this statement via induction on
$c(q) - c(p)$.

First, if $c(q) - c(p) \leq 2D+2$, then there is a directed path from $p$ to
$q$ of length at most $k$ (by the definition of $k$ in terms of $k_1$ and $k_2$). Hence
$(p\phi,q\phi) \in S$. Since $|p| < |q|$, it follows from \eqref{eq:transclosure1} that
$(p\phi,q\phi) \in V$.

So suppose $c(q) - c(p) = h > 2D+2$ and that the result holds for all
$p',q'$ with $c(q') - c(p') < h$. Let $\alpha$ be a directed path from $p$ to
$q$ in which all visited points except the first lie in
$C(p,\infty)$, as per the definition of $T$. There are several
cases to consider:

\begin{enumerate}

\item The path $\alpha$ visits some point in $C(p,q)$. Let $x$ be
  the last such point on $\alpha$ in this range. Let $\beta$ be the
  subpath of $\alpha$ up to and including this visit to $x$, and let
  $\gamma$ be the subpath from $x$ to $q$. Then $\beta$ is a directed
  path that shows that $(p\phi,x\phi) \in T$, and $\gamma$ is a
  directed path that shows that $(x\phi,q\phi) \in T$, since the
  choice of $x$ ensures that $\gamma$ never visits $C[0,x]$
  again. Hence, since $c(x) - c(p)$ and $c(q) - c(x)$ are both less
  than $h$, the induction hypothesis applies to show that
  $(p\phi,x\phi) \in V$ and $(x\phi,q\phi) \in V$. Since $V$ is
  transitive, it follows that $(p\phi,q\phi) \in V$.

\item The path $\alpha$ does not visit any point in
  $C(p,q)$. Then the first arrow in $\gamma$ is a long arrow that runs from $p$ to
  some point $x \in C[q,\infty)$. Now consider two sub-cases:

\begin{enumerate}

\item $c(x) - c(q) \leq 2D+2$. Then there is a directed path from $x$
  to $q$ of length at most $k_2$. Hence there is a path from $p$ to
  $q$ of length at most $k$ (since $k \geq k_2+1$). Hence
  $(p\phi,q\phi) \in S$. Since $|p| < |q|$, it follows from
  \eqref{eq:transclosure1} that $(p\phi,q\phi) \in V$.

\item $c(x) - c(q) > 2D+2$. The subpath $\beta$ from $x$ to $q$ has to
  visit or bypass the $2D+2$ columns in $C(q,x)$. One of two
  cases must hold: either the subpath $\beta$ includes some arrow
  between two points $s$ and $t$ with $c(t) - c(s) > 2$, or the subpath
  $\beta$ visits at least $D+1$ columns between $C[x]$ and
  $C[q]$. Consider each of these sub-sub-cases in turn; in both we will
  construct a new directed path $\alpha'$ from $p$ to $q$:

\begin{enumerate}

\item The subpath $\beta$ includes some arrow between points $s$ and
  $t$ with $c(t) - c(s) > 2$. Let $\gamma$ be the subpath of $\beta$
  from $x$ to $s$ and let $\delta$ be the subpath of $\beta$ from $t$
  to $q$. Since the path $\beta$ does not visit any point in
  $C[0,q)$ and $c(q) > 2D+2$, the path $(\gamma)\tau_{-1}$ exists
    by \fullref{Arrow rule}{arrowrule:down}. Let $\alpha'$ be the path
    formed by concatenating the arrow from $p$ to $x\tau_{-1}$ (which
    exists by \fullref{Arrow rule}{arrowrule:down}), the path
    $(\gamma)\tau_{-1}$, the arrow from $s\tau_{-1}$ to $t$ (which
    exists by \fullref{Arrow rules}{arrowrule:up} and
    \ref{arrowrule:down}), and the path $\delta$.

\item The subpath $\beta$ visits at least $D+1$ columns between $C[x]$
  and $C[q]$. Therefore, since there are only $D$ distinct rows in the
  diagram, the pigeonhole principle shows that the subpath $\beta$ visits
  two points $s$ and $t$ (in that order) with $c(x) \geq c(s) > c(t)
  \geq c(q)$ and $b(s) = b(t)$; furthermore, we can choose $s$ and $t$
  with $c(s)-c(t) \leq 2D+2$. Let $\gamma$ be the subpath of $\beta$
  from $x$ to $s$ and let $\delta$ be the subpath of $\beta$ from $t$
  to $x$. Since the path $\beta$ does not visit any point in
  $C[0,q)$ and $c(q) > 2D+2$, the path
    $(\gamma)\tau_{-(c(s)-c(t))}$ exists by \fullref{Arrow
      rule}{arrowrule:down}. Let $\alpha'$ by the path formed by
    concatenating the arrow from $p$ to $x\tau_{-(c(s)-c(t))}$ (which
    exists by \fullref{Arrow rule}{arrowrule:down}), the path
    $(\gamma)\tau_{-(c(s)-c(t))}$, and the path $\delta$.

\end{enumerate}
In either sub-sub-case, the path $\alpha'$ also runs from $p$ to $q$,
but the first arrow of $\alpha'$ ends at a point $x'$ where $c(x) >
c(x') \geq c(x') - 2D-2$. So replacing $\alpha$ by $\alpha'$ and
iterating this process eventually yields a path $\alpha$ from $p$ to
$q$ where $c(x) - c(q) \leq 2D+2$, reducing this sub-case to
sub-case~(a).\qedhere
\end{enumerate}
\end{enumerate}
\end{proof}

Let
\[
W = S \cup (S \circ V) \cup (V' \circ S) \cup (V' \circ S \circ V).
\]
Notice that $\Lambda(W,\phi)$ is regular and that $W \subseteq R^*$.

\begin{lemma}
\label{lem:trans}
Suppose there is a directed path from $p$ to $q$. Then
$(p\phi,q\phi) \in W$.
\end{lemma}

\begin{proof}
Let $\alpha$ be a directed path from $p$ to $q$. Let $i \in \nset^0$
be maximal such that all points on $\alpha$ lie in $C[i,\infty)$. Let
  $x$ be the point in $C[i]$ that $\alpha$ visits first, and let
  $\beta$ be the subpath of $\alpha$ from $p$ to $x$. Let $y$ be the
  point in $C[i]$ that $\alpha$ visits last, and let $\gamma$ be the
  subpath of $\alpha$ from $y$ to $q$.

Then there is path from $x$ to $y$ of length at most $k$ (by the
definition of $k$ in terms of $k_1$ and $k_2$), and so $(x\phi,y\phi)
\in S$. If $p \neq x$, then $\beta$ is a path of non-zero length from
$p$ to $x$, every point of which, except the last, lies in
$C(i,\infty)$ by the choice of $x$; hence $(p\phi,x\phi) \in
U$. If $y \neq q$, then $\gamma$ is a path of non-zero length from $y$
to $q$, every point of which, except the first, lies in
$C(i,\infty)$; hence $(y\phi,q\phi) \in T$.

Therefore, there are four cases:
\begin{enumerate}

\item $p = x$ and $y = q$. Then $(p\phi,q\phi) = (x\phi,y\phi) \in S$;
\item $p \neq x$ and $y = q$. Then $(p\phi,q\phi) = (p\phi,y\phi) = (p\phi,x\phi) \circ (x\phi,y\phi) \in U \circ S$;
\item $p = x$ and $y \neq q$. Then $(p\phi,q\phi) = (x\phi,q\phi) = (x\phi,y\phi) \circ (y\phi,q\phi) \in S \circ T$;
\item $p \neq x$ and $y \neq q$. Then $(p\phi,q\phi) = (p\phi,x\phi) \circ (x\phi,y\phi) \circ (y\phi,q\phi) \in U \circ S \circ T$.

\end{enumerate}
Since $T \subseteq V$ and $U \subseteq V'$ by
\fullref{Lemma}{lem:sthreesfour} and its dual, in each case
$(p\phi,q\phi) \in W$.
\end{proof}

We can now complete the proof of
\fullref{Theorem}{thm:reftransclosure}. As noted above, $W \subseteq
R^*$. By \fullref{Lemma}{lem:trans}, $R^* \subseteq W$. Therefore $W =
R^*$, and hence $\Lambda(R^*,\phi) = \Lambda(W,\phi)$ is regular.
\end{proof}

As noted above, the regularity of $\Lambda(R^+,\phi)$ can be proved by
a similar method. The proof is slightly more technical, since one has
to exclude paths of length zero, but conceptually the same. We state
the result in full for completeness:

\begin{theorem}
\label{thm:transclosure}
Let $\mathcal{S}$ be a structure admitting an injective unary FA-pre\-sent\-a\-tion
$(a^*,\phi)$. Let $R$ be some binary relation in the signature of
$\mathcal{S}$. Then $\Lambda(R^+,\phi)$ is regular, where $R^+$
denotes the transitive closure of $R$. Hence $\mathcal{S}$ augmented
by $R^+$ is also unary FA-pre\-sent\-a\-ble.
\end{theorem}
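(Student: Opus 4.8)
The cleanest plan is to reduce immediately to \fullref{Theorem}{thm:reftransclosure} via the identity $R^+ = R \circ R^*$, valid for every binary relation $R$. By \fullref{Theorem}{thm:reftransclosure}, $\mathcal{S}$ augmented by the reflexive and transitive closure $R^*$ still admits the unary FA-presentation $(a^*,\phi)$, so in this augmented structure $R^*$ lies in the signature and $R^+$ is defined by the first-order formula $\exists y\,\bigl(R(x,y) \land R^*(y,z)\bigr)$. Hence, by \fullref{Proposition}{prop:firstorderreg}, $\Lambda(R^+,\phi)$ is regular; and since the $\Lambda$-languages of $R$ and of the remaining relations of $\mathcal{S}$ are already regular, $(a^*,\phi)$ is a unary FA-presentation of $\mathcal{S}$ augmented by $R^+$. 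On this route there is no real obstacle: one need only verify the elementary set-theoretic equality $R^+ = R\circ R^*$, and the argument could equally be phrased using closure of regular relations under composition, exactly as is done for $W$ in the proof of \fullref{Theorem}{thm:reftransclosure}.

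A more self-contained alternative --- the route suggested by the remark above --- is to re-run the proof of \fullref{Theorem}{thm:reftransclosure} while everywhere tracking directed paths of \emph{non-zero} length. One would replace $S = R^{\leq k}$ by its proper part $S' = R \cup (R\circ R) \cup \cdots \cup (\underbrace{R\circ\cdots\circ R}_{k})$, which is still first-order definable, so $\Lambda(S',\phi)$ is regular. The relations $V$ and $V'$ of \eqref{eq:transclosure1} and \eqref{eq:transclosure2} need no change: each of their steps strictly increases length, hence connects distinct elements and so is realized by a path of length between $1$ and $k$, that is, lies in $S'$; as there is at least one such step, $V, V' \subseteq R^+$ exactly as before, and \fullref{Lemma}{lem:s3regular} (the construction of $\fsa{B}$) goes through verbatim. \fullref{Lemma}{lem:sthreesfour} also survives, since the paths witnessing membership in $T$ and $U$ automatically have non-zero length --- in both cases the endpoints lie in distinct columns --- so its base case yields $(p\phi,q\phi)\in S'\subseteq R^+$ and the pumping case-analysis is untouched. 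The genuinely new point is the analogue of \fullref{Lemma}{lem:trans}: when a non-zero path $\alpha$ from $p$ to $q$ is cut at the first and last points $x$, $y$ of the minimal column it visits, the middle subpath from $x$ to $y$ may now be trivial --- precisely when $x = y$, for example when $\alpha$ is a cycle based in that column --- and then it contributes no $S$-step.

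This forces two adjustments. First, \fullref{Lemma}{lem:nearcolumnpaths} must be strengthened so that it applies also when $p = q$: a \emph{non-zero} directed path from $p$ to $q$ that avoids $C[0]$ (a cycle, when $p = q$) should yield one of bounded length avoiding $C[0]$. Its proof is unchanged except that one now quantifies over all, rather than merely distinct, pairs of points in a fixed block of $2D+2$ consecutive columns, and the fixed paths $\alpha_{p,q}$ for $p,q \in C[0,2D+2]$ must likewise be required to have non-zero length. Second, $W$ must be enlarged to
\[
W' = S' \cup (S'\circ V) \cup (V'\circ S') \cup (V'\circ S'\circ V) \cup V \cup V' \cup (V'\circ V),
\]
whose seven terms record which of a $V'$-prefix, an $S'$-middle and a $V$-suffix are present (at least one being present); each term is contained in $R^+$, being a composite of relations contained in the transitive relation $R^+$, and $\Lambda(W',\phi)$ is regular as a finite union of composites of regular relations. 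Running through the analogue of \fullref{Lemma}{lem:trans} according to whether $p = x$, whether $y = q$, and whether $x = y$ then shows $(p\phi,q\phi) \in W'$ whenever there is a non-zero directed path from $p$ to $q$, whence $W' = R^+$ and $\Lambda(R^+,\phi) = \Lambda(W',\phi)$ is regular. The main obstacle on this route is purely bookkeeping: enumerating the combinations of trivial and non-trivial subpaths and checking that each lands in one of the seven terms of $W'$.
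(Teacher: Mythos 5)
Both of your routes are correct. Note first that the paper does not actually carry out a proof of \fullref{Theorem}{thm:transclosure}: it only remarks that the argument for $R^+$ is ``similar'' to that for $R^*$ but ``slightly more technical, since one has to exclude paths of length zero''. Your second route is exactly that elaboration, and your bookkeeping is right: the only places where zero-length paths intrude are the fixed paths $\alpha_{p,q}$ (and \fullref{Lemma}{lem:nearcolumnpaths}, which must indeed be extended to the case $p=q$, i.e.\ to bounded-length cycles avoiding $C[0]$) and the middle subpath from $x$ to $y$ in \fullref{Lemma}{lem:trans}, which degenerates precisely when $x=y$; your seven-term $W'$ covers all the resulting combinations, each term lies in the transitive relation $R^+$, and regularity is preserved throughout ($V$ and $V'$ are genuinely unchanged, since a step with $|t_i|<|t_{i+1}|$ joins distinct elements and so already lies in $S\setminus E\subseteq S'$). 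Your first route, however, is a genuinely different and far more economical argument that the paper does not mention: by \fullref{Theorem}{thm:reftransclosure} the presentation $(a^*,\phi)$ already serves for $\mathcal{S}$ augmented by $R^*$, and with the paper's left-to-right convention for $\circ$ the identity $R^+=R\circ R^*$ is immediate, so $R^+$ is first-order definable over the augmented structure and \fullref{Proposition}{prop:firstorderreg} gives regularity of $\Lambda(R^+,\phi)$ in one line. The reduction buys brevity and avoids re-examining the path decomposition entirely; the direct route buys only self-containedness. Either suffices, and the first would be the preferable proof to record.
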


\begin{corollary}
\label{corol:equivrelgen}
Let $\mathcal{S}$ be a structure admitting an injective unary
FA-pre\-sent\-a\-tion $(a^*,\phi)$. Let $R$ be some binary relation in
the signature of $\mathcal{S}$. Let $Q$ be the equivalence relation
generated by $R$. Then $\Lambda(Q,\phi)$ is regular. Hence
$\mathcal{S}$ augmented by $Q$ is also unary FA-pre\-sent\-a\-ble.
\end{corollary}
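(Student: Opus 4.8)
The plan is to realize the equivalence relation $Q$ generated by $R$ as the reflexive and transitive closure of the symmetric closure $R_2$ of $R$ (the relation $R_2$ already defined near the start of this section), and then to quote \fullref{Theorem}{thm:reftransclosure}. The point is that $R_2$, being defined by the first-order formula $(x,y) \in R \lor (y,x) \in R$, gives a relation for which $\Lambda(R_2,\phi)$ is regular by \fullref{Proposition}{prop:firstorderreg}; equivalently, $\mathcal{S}$ augmented by $R_2$ still admits the injective unary FA-pre\-sent\-a\-tion $(a^*,\phi)$.

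First I would apply \fullref{Theorem}{thm:reftransclosure} to this augmented structure, with $R_2$ playing the role of $R$. This yields that $\Lambda(R_2^*,\phi)$ is regular, where $R_2^*$ is the reflexive and transitive closure of $R_2$; equivalently, $\mathcal{S}$ augmented by $R_2^*$ is unary FA-pre\-sent\-a\-ble. Next I would check that $R_2^* = Q$. By construction $R_2^*$ is reflexive and transitive; it is symmetric because $R_2$ is symmetric, so reversing a directed $R_2$-path again gives a directed $R_2$-path; and it contains $R$, since $R \subseteq R_2 \subseteq R_2^*$. Thus $R_2^*$ is an equivalence relation containing $R$. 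Conversely, any equivalence relation containing $R$ contains $R_2$ (by symmetry) and hence contains the reflexive and transitive closure $R_2^*$. Therefore $R_2^*$ is the least equivalence relation containing $R$, i.e. $R_2^* = Q$, so $\Lambda(Q,\phi) = \Lambda(R_2^*,\phi)$ is regular and $\mathcal{S}$ augmented by $Q$ admits a unary FA-pre\-sent\-a\-tion.

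There is essentially no obstacle here: the corollary is a direct consequence of \fullref{Theorem}{thm:reftransclosure} applied to the (first-order definable, hence harmless to adjoin) symmetric closure of $R$. The only point that merits an explicit word is the identification $Q = R_2^*$, and in particular the symmetry of $R_2^*$, which follows immediately from the symmetry of $R_2$.
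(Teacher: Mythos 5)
Your proposal is correct and is essentially identical to the paper's own proof: both pass to the symmetric closure of $R$ (which is first-order definable, hence has regular $\Lambda$-relation) and then apply \fullref{Theorem}{thm:reftransclosure} to its reflexive and transitive closure, which equals $Q$. Your extra verification that this closure really is the equivalence relation generated by $R$ is a detail the paper leaves implicit.
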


\begin{proof}
Let $R'$ be the symmetric closure of $R$. Then $\Lambda(R',\phi)$ is
regular since $R'$ is first-order definable in terms of $R$. Since the
equivalence relation $Q$ is the reflexive and transitive closure
$(R')^*$ of $R'$, the relation $\Lambda(Q,\phi)$ is regular by
\fullref{Theorem}{thm:reftransclosure}.
\end{proof}

\section{Binary relations}

This section is devoted to characterizing unary FA-pre\-sent\-a\-ble binary
relations (\fullref{Theorem}{thm:binrelchar}), with the aim of
subsequently giving useful characterizations of unary FA presentable
quasi-orders (\fullref{Theorem}{thm:qosetchar}), partial orders
(\fullref{Theorem}{thm:posetchar}), and tournaments
(\fullref{Theorem}{thm:tournamentchar}).

These characterizations have a common form. A structure of one of
these species is unary FA-pre\-sent\-a\-ble if it can be obtained by
extending a finite structure of the same species in a particular
`periodic' fashion that we call `propagation'. The way in which the
finite structure extends is determined by a collection of
distinguished five-element subsets and the relations holding between
these and the rest of the structure. The importance of the subsets
having five elements is to ensure that transitivity is preserved when
an infinite structure is obtained by propagating a finite one (see
the comments following the proof of \fullref{Lemma}{lem:exttransitive}).

\subsection{FA-foundational binary relations}
\label{subsec:foundational}

Let $\rr$ be a binary relation defined on a finite set $Q$. We will
consider $(Q,\rr)$ as a directed graph with vertex set $Q$ and edges
$\rr$, so that there is at most one directed edge from $p$ to $q$ for
any $p,q \in Q$.

Suppose $Q$ is equipped with a distinguished collection of disjoint
subsets $P_0,\ldots,P_{n-1}$ called \defterm{seeds}, that fulfil the
following conditions. Each seed $P_k$ consists of five elements
$p^{(k)}_1$, $p^{(k)}_2$, $p^{(k)}_3$, $p^{(k)}_4$, $p^{(k)}_5$ such
that the following conditions are satisfied for $k,l \in
\{0,\ldots,n-1\}$ (including the possibility that $k = l$) and $q \in
Q' = Q - (P_0 \cup \ldots \cup P_{n-1})$:
\begin{enumerate}

\begin{figure}[p]
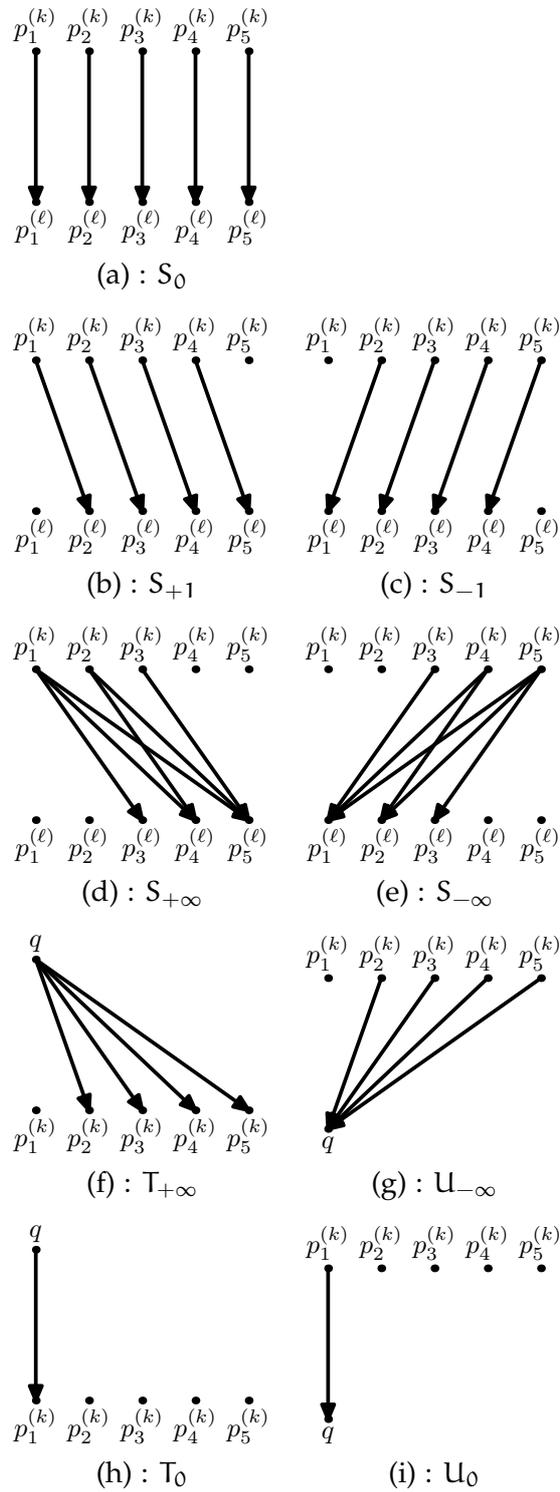

\centerline{%
\begin{tabular}{cc}
\includegraphics{\jobname-qoset-base-weak-same.eps} \\
(a) : $S_0$ \\[3mm]
\includegraphics{\jobname-qoset-base-weak-right.eps} &
\includegraphics{\jobname-qoset-base-weak-left.eps} \\
(b) : $S_{+1}$ & (c) : $S_{-1}$ \\[3mm]
\includegraphics{\jobname-qoset-base-strong-right.eps} &
\includegraphics{\jobname-qoset-base-strong-left.eps} \\
(d) : $S_{+\infty}$ & (e) : $S_{-\infty}$ \\[3mm]
\includegraphics{\jobname-qoset-base-strong-singletonout.eps} &
\includegraphics{\jobname-qoset-base-strong-singletonin.eps} \\
(f) : $T_{+\infty}$ & (g) : $U_{-\infty}$ \\[3mm]
\includegraphics{\jobname-qoset-base-weak-singletonout.eps} &
\includegraphics{\jobname-qoset-base-weak-singletonin.eps} \\
(h) : $T_0$ & (i) : $U_0$
\end{tabular}%
}
\caption{Conditions on edges between $P_k$, $P_l$, and $q$, where $k,l \in \{0,\ldots,n-1\}$ and $q \in Q'$.}
\label{fig:baseconnections}
\end{figure}

\item $p^{(k)}_1 \rr p^{(l)}_1 \iff p^{(k)}_2 \rr p^{(l)}_2 \iff
  p^{(k)}_3 \rr p^{(l)}_3 \iff p^{(k)}_4 \rr p^{(l)}_4 \iff p^{(k)}_5
  \rr p^{(l)}_5$. That is, either all or none of the edges in
  \fullref{Figure}{fig:baseconnections}(a) run from $P_k$ to
  $P_l$. (There may be other edges between $P_k$ and $P_l$ that are
  not shown in the figure, but either all the edges shown here are present
  or none are. The same caveat applies to the remaining conditions.) If
  all of these edges are present, we say there is an $S_0$ connection
  from $P_k$ to $P_l$.

\item $p^{(k)}_1 \rr p^{(l)}_2 \iff p^{(k)}_2 \rr p^{(l)}_3 \iff
  p^{(k)}_3 \rr p^{(l)}_4 \iff p^{(k)}_4 \rr p^{(l)}_5$. That is,
  either all or none of the edges in
  \fullref{Figure}{fig:baseconnections}(b) run from $P_k$ to $P_l$. If
  all these are present, we say there is an $S_{+1}$ connection from
  $P_k$ to $P_l$.

\item $p^{(k)}_2 \rr p^{(l)}_1 \iff p^{(k)}_3 \rr p^{(l)}_2 \iff
  p^{(k)}_4 \rr p^{(l)}_3 \iff p^{(k)}_5 \rr p^{(l)}_4$. That is,
  either all or none of the edges in
  \fullref{Figure}{fig:baseconnections}(c) run from $P_k$ to $P_l$. If
  all these edges are present, we say there is an $S_{-1}$ connection
  from $P_k$ to $P_l$.

\item $p^{(k)}_1 \rr p^{(l)}_3 \iff p^{(k)}_1 \rr p^{(l)}_4 \iff
  p^{(k)}_1 \rr p^{(l)}_5 \iff p^{(k)}_2 \rr p^{(l)}_4 \iff p^{(k)}_2
  \rr p^{(l)}_5 \iff p^{(k)}_3 \rr p^{(l)}_5$. That is, either all or
  none of the edges in \fullref{Figure}{fig:baseconnections}(d) run
  from $P_k$ to $P_l$. If all these edges are present, we say there is
  an $S_{+\infty}$ connection from $P_k$ to $P_l$.

\item $p^{(k)}_3 \rr p^{(l)}_1 \iff p^{(k)}_4 \rr p^{(l)}_1 \iff
  p^{(k)}_5 \rr p^{(l)}_1 \iff p^{(k)}_4 \rr p^{(l)}_2 \iff p^{(k)}_5
  \rr p^{(l)}_2 \iff p^{(k)}_5 \rr p^{(l)}_3$. That is, either all or
  none of the edges in \fullref{Figure}{fig:baseconnections}(e) run
  from $P_k$ to $P_l$. If all these edges are present, we say there is
  an $S_{-\infty}$ from $P_k$ to $P_l$.

\item $q \rr p^{(k)}_2 \iff q \rr p^{(k)}_3 \iff q \rr p^{(k)}_4 \iff
  q \rr p^{(k)}_5$. That is, either all or none of the edges in
  \fullref{Figure}{fig:baseconnections}(f) run from $q$ to $P_k$. If
  all these edges are present, we say there is a $T_{+\infty}$ from
  $q$ to $P_k$.

\item $p^{(k)}_2 \rr q \iff p^{(k)}_3 \rr q \iff p^{(k)}_4 \rr q \iff
  p^{(k)}_5 \rr q$. That is, either all or none of the edges in
  \fullref{Figure}{fig:baseconnections}(g) run from $P_k$ to $q$. If
  all these edges are present, we say there is a $U_{-\infty}$
  connection from $P_k$ to $q$.

\end{enumerate}
A finite binary relation equipped with such a collection of
distinguished subsets is called a \defterm{unary FA-foundational
  binary relation}.

For convenience, define the following additional connections for $q
\in Q'$ and $k \in \{0,\ldots,n-1\}$:
\begin{enumerate}
\item $q \rr p^{(k)}_1$. That is, the edge in
  \fullref{Figure}{fig:baseconnections}(h) runs between $q$ and
  $p^{(k)}_1$. If this edge is present, we say there is a $T_0$
  connection from $q$ to $P_k$.

\item $p^{(k)}_1 \rr q$. That is, the edge in
  \fullref{Figure}{fig:baseconnections}(i) runs between $p^{(k)}_1$
  and $q$. If this edge is present, we say there is a $U_0$ connection
  from $P_k$ to $q$.
\end{enumerate}

Let $k,l \in \{0,\ldots,n-1\}$. Notice that every possible edge from
$P_k$ to $P_l$ is part of exactly one connection $S_{-\infty},
S_{-1}, S_0, S_{+1}, S_{+\infty}$. Thus the set of edges from $P_k$ to
$P_l$ is made up of a union (possibly empty) of these
connections. Similarly, the set of edges from $q \in Q'$ to $P_k$ is
made up of a union of $T_0$ and $T_{+\infty}$, and similarly the set
of edges from $P_k$ to $q$ is made up of a union of $U_0$ and
$U_{-\infty}$.

\subsection{Propagating an FA-foundational binary relation}
\label{subsec:propagating}

Extend $Q$ to an infinite set as follows. For each $k \in
\{0,\ldots,n-1\}$, let $\ext{P}_k = \{p^{(k)}_i : i \in \nset\}$. Let
\[
\ext{Q} = Q' \cup \bigcup_{k=0}^{n-1} \ext{P}_k.
\]
That is, to obtain $\ext{Q}$ from $Q$, each seed $P_k$
of $Q$ is extended to an infinite subset $\ext{P}_k$.

We now describe how to extend $\rr$ to a binary relation $\rre$ on
$\ext{Q}$. Define $\rre$ as follows: first, for $p,q \in Q'$ as
follows: $p \rre q$ if $p \rr q$. For any $k,l \in \{0,\ldots,n-1\}$
and $q \in Q'$:
\begin{enumerate}

\begin{figure}[p]
\centerline{%
\begin{tabular}{cc}
\includegraphics{\jobname-qoset-propagation-weak-same.eps} \\
(a) : $\ext{S}_0$ \\[3mm]
\includegraphics{\jobname-qoset-propagation-weak-right.eps} &
\includegraphics{\jobname-qoset-propagation-weak-left.eps} \\
(b) : $\ext{S}_{+1}$ & (c) : $\ext{S}_{-1}$ \\[3mm]
\includegraphics{\jobname-qoset-propagation-strong-right.eps} &
\includegraphics{\jobname-qoset-propagation-strong-left.eps} \\
(d) : $\ext{S}_{+\infty}$ & (e) $\ext{S}_{-\infty}$ \\[3mm]
\includegraphics{\jobname-qoset-propagation-strong-singletonout.eps} &
\includegraphics{\jobname-qoset-propagation-strong-singletonin.eps} \\
(f) : $\ext{T}_{+\infty}$ & (g) : $\ext{U}_{-\infty}$ \\[3mm]
\includegraphics{\jobname-qoset-propagation-weak-singletonout.eps} &
\includegraphics{\jobname-qoset-propagation-weak-singletonin.eps} \\
(h) : $\ext{T}_0$ & (i) : $\ext{U}_0$
\end{tabular}%
}
\caption{Extension of $\rr$ to $\rre$, where $q \in Q'$ and $k,l \in \{0,\ldots,n-1\}$.}
\label{fig:extensions}
\end{figure}

\item If $p^{(k)}_1 \rr p^{(l)}_1$, then $p^{(k)}_i \rre p^{(l)}_i$
  for all $i \in \nset$. That is, if there is an edge
  $(p^{(k)}_1,p^{(l)}_1) \in \rr$, then all the edges shown in
  \fullref{Figure}{fig:extensions}(a) are present. [We will discuss
    why some edges are shown as bold and some as normal weight later in this
    subsection. There may be other edges between $\ext{P}_k$ and
    $\ext{P}_l$ not shown in this figure. These remarks apply to the
    other cases below.] If all these edges are present, we say there
  is an $\ext{S}_0$ connection from $\ext{P}_k$ to $\ext{P}_l$.

\item If $p^{(k)}_1 \rr p^{(l)}_2$, then $p^{(k)}_i \rre
  p^{(l)}_{i+1}$ for all $i \in \nset$. That is, if there is an edge
  from $(p^{(k)}_1,p^{(l)}_2) \in \rr$, then all the edges shown in
  \fullref{Figure}{fig:extensions}(b) are present. If all these edges
  are present, we say there is an $\ext{S}_{+1}$ connection from $\ext{P}_k$
  to $\ext{P}_l$.

\item If $p^{(k)}_2 \rr p^{(l)}_1$, then $p^{(k)}_{i+1} \rre
  p^{(l)}_i$ for all $i \in \nset$. That is, if there is an edge
  $(p^{(k)}_2,p^{(l)}_1) \in \rr$, then all the edges shown in
  \fullref{Figure}{fig:extensions}(c) are present. If all these edges
  are present, we say there is an $\ext{S}_{-1}$ connection from $\ext{P}_k$
  to $\ext{P}_l$.

\item If $p^{(k)}_1 \rr p^{(l)}_3$, then $p^{(k)}_i \rre
  p^{(l)}_{i+j}$ for all $i,j \in \nset$ with $j \geq 2$. That is, if
  there is an edge $(p^{(k)}_1,p^{(l)}_3) \in \rr$, then all the edges
  shown in \fullref{Figure}{fig:extensions}(d) are present.  (For
  clarity, only edges that both start and end in the scope of this
  diagram are shown. The same applies to the following diagrams.) If
  all these edges are present, we say there is an $\ext{S}_{+\infty}$
  connection from $\ext{P}_k$ to $\ext{P}_l$.

\item If $p^{(k)}_3 \rr p^{(l)}_1$, then $p^{(k)}_{i+j} \rre
  p^{(l)}_i$ for all $i,j \in \nset$ with $j \geq 2$. That is, if
  there is an edge $(p^{(k)}_3,p^{(l)}_1) \in \rr$, then all the edges
  shown in \fullref{Figure}{fig:extensions}(e) exist. If all these
  edges are present, we say there is an $\ext{S}_{-\infty}$ connection
  from $\ext{P}_k$ to $\ext{P}_l$.

\item If $q \rr p^{(k)}_2$, then $q \rre p^{(k)}_i$ for all $i \in
  \nset$ with $i \geq 2$. That is, if there is an edge $(q,p^{(k)}_2)
  \in \rr$, then all the edges shown in
  \fullref{Figure}{fig:extensions}(f) are present. If all these edges
  are present, we say there is an $\ext{T}_{+\infty}$ connection from
  $q$ to $\ext{P}_l$.

\item If $p^{(k)}_2 \rr q$, then $p^{(k)}_i \rre q$ for all $i \in
  \nset$ with $i \geq 2$. That is, if there is an edge $(p^{(k)}_2,q)
  \in \rr$, then all the edges shown in
  \fullref{Figure}{fig:extensions}(g) are present. If all these edges
  are present, we say there is an $\ext{U}_{-\infty}$ connection from
  $\ext{P}_k$ to $q$.

\end{enumerate}

For convenience, define the following additional connections for $q
\in Q'$ and $k \in \{1,\ldots,n\}$:
\begin{enumerate}
\item If $q \rr p^{(k)}_1$, then $q \rre p^{(k)}_1$. That is, the edge
  in \fullref{Figure}{fig:extensions}(h) runs from $q$ to
  $p^{(k)}_1$. If this edge is present, we say that there is a
  $\elt{T}_0$ connection from $q$ to $\ext{P}_k$. Notice that a $T_0$
  connection from $q$ to $P_k$ and a $\ext{T}_0$ connection from $q$
  to $\ext{P}_k$ consist of the same edge.

\item If $p^{(k)}_1 \rr q$, then $p^{(k)}_1 \rre q$. That is, the edge
  in \fullref{Figure}{fig:extensions}(i) runs from $p^{(k)}_1$ to
  $q$. If this edge is present, we say there is a $\elt{U}_0$
  connection from $\ext{P}_k$ to $q$. Notice that a $U_0$ connection
  from $P_k$ to $q$ and a $\ext{U}_0$ connection from $\ext{P}_k$ to
  $q$ consist of the same edge.

\end{enumerate}

Observe that each edge in any diagram in
\fullref{Figure}{fig:baseconnections} also appears in the
corresponding diagram in \fullref{Figure}{fig:extensions}. That is, $p
\rr q$ if and only if $p \rre q$ for any $p,q \in Q$. Thus $\rre$
genuinely extends $\rr$. (The bold edges in
\fullref{Figure}{fig:extensions} are those present in $\rr$.)

Notice further that a $S_\sigma$ connection from $P_k$ to $P_l$
gives rise to an $\ext{S}_\sigma$ connection from $\ext{P}_k$ to
$\ext{P}_l$ for any $\sigma \in
\{-\infty,-1,0,+1,+\infty\}$. Similarly, a $T_\sigma$ connection
from $q \in Q'$ to $P_k$ gives rise to a $\ext{T}_\sigma$
connection from $q$ to $\ext{P}_l$ for $\sigma \in \{0,+\infty\}$,
and a $U_\sigma$ connection from $P_k$ to $q \in Q'$ gives rise to
a $\ext{U}_\sigma$ connection from $q$ to $\ext{P}_l$ for $\sigma
\in \{-\infty,0\}$.

Let $k,l \in \{0,\ldots,n-1\}$. Notice that every possible edge from
$\ext{P}_k$ to $\elt{P}_l$ is part of exactly one connection
$\ext{S}_{-\infty}, \ext{S}_{-1}, \ext{S}_0, \ext{S}_{+1},
\ext{S}_{+\infty}$. Thus the set of edges from $\ext{P}_k$ to $\ext{P}_l$ is
made up of a union (possibly empty) of these connections. Similarly,
the set of edges from $q \in Q'$ to $\ext{P}_k$ is made up of a union of
$\ext{T}_0$ and $\ext{T}_{+\infty}$, and similarly the set of edges from $\ext{P}_k$ to
$q$ is made up of a union of $\ext{U}_0$ and $\ext{U}_{-\infty}$.

\subsection{Characterization of binary relations}

\begin{theorem}
\label{thm:binrelchar}
A binary relation is unary FA-pre\-sent\-a\-ble if and only if it can be
obtained by propagating a unary FA-foundational binary relation.
\end{theorem}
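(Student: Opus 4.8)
The plan is to prove the two implications separately, using the diagrammatic machinery of \fullref{\S}{sec:pumping} throughout.

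For the ``if'' direction, I would start from a unary FA-foundational binary relation $(Q,\rr)$ with seeds $P_0,\ldots,P_{n-1}$ and its propagation $(\ext{Q},\rre)$. I need to produce a unary FA-presentation. The natural choice is to lay $\ext{Q}$ out in a diagram of $D = n$ rows (one row per seed, plus finitely many extra rows for the finitely many elements of $Q'$, which I would absorb either into extra columns at the left or by a preliminary disjoint-union step using \fullref{Lemma}{lem:union}): the element $p^{(k)}_i$ sits in row $k$, column $i$, and the elements of $Q'$ sit in column $C[0]$. Then I must check that $\Lambda(\rre,\phi)$ is regular. Each of the connection types $\ext{S}_{-\infty},\ext{S}_{-1},\ext{S}_0,\ext{S}_{+1},\ext{S}_{+\infty}$ between a pair of rows, and each of $\ext{T}_0,\ext{T}_{+\infty},\ext{U}_0,\ext{U}_{-\infty}$ between a row and column $C[0]$, contributes a set of pairs $(a^{kD+i},a^{lD+j})$ cut out by a simple arithmetic condition on $(i,j)$ (equality, shift by $\pm 1$, or a half-line $j \geq i+2$ etc.), all of which are regular; the total relation is a finite union over the (finitely many) row-pairs and row/column pairs of such regular pieces, together with the finite piece on $Q' \times Q'$. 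The equality relation on $a^*$ is trivially regular. Hence $(a^*,\phi)$ is a unary FA-presentation and $(\ext{Q},\rre)$ is unary FA-presentable.

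For the ``only if'' direction, let $(X,R)$ be unary FA-presentable; by \fullref{Theorem}{thm:allwords} fix an injective unary FA-presentation $(a^*,\phi)$ with diagram of $D$ rows, so $X$ is identified with the points of the diagram and $R$ with a set of arrows, and \fullref{Arrow rules}{arrowrule:up} and~\ref{arrowrule:down} are available. The idea is to let the ``finite part'' $Q$ be a sufficiently long initial segment of columns, say $C[0,N]$ for a suitable $N$, and to designate, inside each row, a five-element seed $P_k = \{p^{(k)}_1,\ldots,p^{(k)}_5\}$ consisting of five consecutive points of row $k$ at columns $C[m],\ldots,C[m+4]$ for some $m$ chosen large enough that the \fullref{Arrow rules}{} can be freely applied (e.g. $m > 1$ and $m+4 < N$). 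One then reads off which connections $S_\sigma$, $T_\sigma$, $U_\sigma$ are present from the actual arrows of $R$ among these seeds and $Q' = C[0,N] - \bigcup_k P_k$. The work is to verify: (i) the foundational axioms~(1)--(7) of \fullref{\S}{subsec:foundational} hold for these seeds — this is exactly \fullref{Arrow rules}{arrowrule:up} and~\ref{arrowrule:down} applied to short and long arrows between the five designated points (the ``$\iff$'' chains are precisely orbits of single arrows under shifting by one column); (ii) the propagated relation $\rre$ built from this data coincides with $R$ on all of $X$ — that an arrow of $R$ between two far-out columns, or between a far-out column and $C[0]$, is forced to belong to one of the $\ext{S}_\sigma$/$\ext{T}_\sigma$/$\ext{U}_\sigma$ connection patterns, again by pumping. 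The choice of $N$ (and a Ramsey/pigeonhole argument over the finitely many possible ``connection types'' between pairs of row-tails) guarantees that beyond column $N$ the pattern has stabilized, so that extending the seeds periodically reproduces $R$ exactly.

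The main obstacle I expect is part~(ii) of the ``only if'' direction, and in particular handling \emph{long} arrows correctly. A short arrow between two points stays rigid under the \fullref{Arrow rules}{}, so its shift-orbit is easy to describe; but a long arrow can have its two ends shifted independently, so a single long arrow in $R$ generates a whole two-parameter family of arrows (this is why the $S_{\pm\infty}$, $T_{+\infty}$, $U_{-\infty}$ connections involve the parameters $i,j$ with $j \geq i+2$ rather than a single diagonal). Showing that $R$, restricted to arrows with at least one endpoint far to the right, decomposes \emph{exactly} into finitely many such triangular families — no more and no less — requires a careful pumping-up and pumping-down argument bounding how an arrow's endpoints can move, and it is here that the constant $D$ (the number of rows) and the choice of the five-element, rather than shorter, seeds do the real work: five consecutive points are enough to witness, via the \fullref{Arrow rules}{}, the difference between a ``$\pm 1$-shift'' connection and a ``$\pm\infty$'' connection, and to pin down the behaviour near the boundary between them.
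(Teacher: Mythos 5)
Your proposal follows essentially the same route as the paper in both directions: the ``if'' direction via an explicit encoding $a^{|Q'|+ni+k} \mapsto p^{(k)}_i$ and the observation that each connection type contributes a regular (arithmetically defined) set of pairs, and the ``only if'' direction via the diagram and the Arrow rules. The one substantive difference is in the ``only if'' direction, where you take $Q$ to be a long initial segment $C[0,N]$, place the seeds at columns $C[m],\ldots,C[m+4]$ somewhere in its interior, and invoke a Ramsey/pigeonhole argument to show the arrow pattern ``stabilizes'' beyond column $N$. This extra machinery is unnecessary: \fullref{Arrow rules}{arrowrule:up} and~\ref{arrowrule:down} already show that any arrow with both endpoints in $C[1,\infty)$ can be shifted to \emph{every} position of the same type (same row pair; same offset if short, arbitrary offset of absolute value at least $2$ in the same direction if long), so the pattern is uniform from column $1$ onwards and the only exceptional column is $C[0]$. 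The paper therefore simply takes $Q' = (C[0])\phi$ and $P_k = (B[k]\cap C[1,5])\phi$, and the foundational axioms and the identity ``propagation of $(Q,\rr)$ equals $(\ext{Q},\rre)$'' follow in one step from the fact that the five connection types partition the possible edges between two rows.

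One point in your setup needs repair if you insist on a larger finite part: propagation extends each seed by \emph{appending} elements $p^{(k)}_6,p^{(k)}_7,\ldots$ after $p^{(k)}_5$, while $Q'$ stays fixed. If the seeds sit at columns $C[m],\ldots,C[m+4]$ with $m+4<N$, then the points of the original diagram in columns $C[m+5],\ldots,C[N]$ would have to be simultaneously elements of $Q'$ (they are in $Q\setminus\bigcup_k P_k$) and the propagated elements $p^{(k)}_6,\ldots$ — so the propagated structure would not reproduce the original. The seeds must occupy the rightmost five columns of the finite part, which is exactly the paper's choice $C[1,5]$ with $Q'=C[0]$. Also, your closing remark about why five elements are needed is slightly off target for this theorem: the size five is irrelevant to \fullref{Theorem}{thm:binrelchar} itself (any length of seed that distinguishes offsets $\leq-2,-1,0,+1,\geq 2$ would do) and is chosen so that transitivity of two $S_{+\infty}$ connections can be witnessed inside $Q$, which matters only for \fullref{Lemma}{lem:exttransitive}.
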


\begin{proof}
\textit{First part.} Let $(\ext{Q},\rre)$ be a binary relation
obtained by propagating a unary FA-foundational binary
relation. Retain notation from
\fullref{Subsections}{subsec:foundational} and
\ref{subsec:propagating}. Notice first that if $(\ext{Q},\rre)$ is
finite (which can happen if the number of seeds $n$ is
$0$), it is unary FA-pre\-sent\-a\-ble. So assume $(\ext{Q},\rre)$ is
infinite, which requires $n > 0$.

Define a representation map $\phi : a^* \to \ext{Q}$ as follows.
Elements of $Q'$ are represented by the words
$a^0,\ldots,a^{|Q'|-1}$. The elements $p^{(k)}_i \in P_k$, where $k \in
\{0,\ldots,n-1\}$ and $i \in \nset$ are represented by words of the form
$a^{|Q'|+ni+k}$. That is, given a word $a^h$ with $h \geq |Q'|$,
the set $P_k$ to which $a^h\phi$ belongs is determined by the
remainder of dividing $h - |Q'|$ by $n$, and the subscript $i$ is
determined by the (integer) quotient of $h - |Q'|$ by
$n$.

An automaton recognizing $\conv\Lambda(\rre,\phi)$ functions as
follows: while reading each of its two tracks, it stores either the length
of the word read up to a maximum length of $|Q'|$, or the length of
the word modulo $n$. Thus, for the input word on each track, the
automaton knows either which element of $Q'$ is represented by the
input word, or which of the various $P_k$ contains the element
represented by the input word. The automaton also stores the
difference in lengths between the two input words, up to a maximum
difference of $\pm (2n+|Q'|)$. In particular, therefore, in the case
when both input words represent elements $p^{(k)}_i$ and
$p^{(l)}_j$, the automaton knows whether $j-i$ is less than or
equal to $-2$, equal to $-1$, $0$, or $1$, or at least $2$. In the
case when one word represents $q \in Q'$ and the other $p^{(k)}_i$,
the automaton knows whether the subscript $i$ is $1$ or at least $2$.

We claim that this bounded amount of stored information is enough for
the automaton to decide whether the two input words represent elements
related by $\rre$. 

First, if both words represent elements of $Q'$, the automaton can
accept if and only if the two elements (which it stores) are related
by $\rre$.

Second, if the element represented by the left-hand input word is $q
\in Q'$ and the other element is $p^{(k)}_i$, the automaton accepts
either if $i=1$ and there is a $\ext{T}_0$ connection between $q$ and
$\ext{P}_k$, or if $i \geq 2$ and there is a $\ext{T}_{+\infty}$
connection between $q$ and $\ext{P}_k$. Recall that the automaton
stores the element $q$, the index $k$, and whether the subscript $i$
is $1$ or at least $2$.

The case where the element represented by the right-hand input word is $q
\in Q'$ and the other element is $p^{(k)}_i$ is similar.

Third, if the element represented by the left-hand input word is
$p^{(k)}_i$ and the other is $p^{(l)}_j$, then the automaton must
accept if and only if
\begin{itemize}
\item $j-i \geq 2$ and there is a $\ext{S}_{+\infty}$ connection between $\ext{P}_k$ and $\ext{P}_l$, or
\item $j-i = 1$ and there is a $\ext{S}_{+1}$ connection between $\ext{P}_k$ and $\ext{P}_l$, or
\item $j-i = 0$ and there is a $\ext{S}_{0}$ connection between $\ext{P}_k$ and $\ext{P}_l$, or
\item $j-i = -1$ and there is a $\ext{S}_{-1}$ connection between $\ext{P}_k$ and $\ext{P}_l$, or
\item $j-i \leq 2$ and there is a $\ext{S}_{-\infty}$ connection between $\ext{P}_k$ and $\ext{P}_l$.
\end{itemize}
Recall that the automaton knows sufficient information about
$j-i$ and knows the indices $k$ and $l$.

\medskip
\noindent\textit{Second part.} Suppose that $(\ext{Q},\rre)$ is a
unary FA-pre\-sent\-a\-ble binary relation. If $\ext{Q}$ is finite, it is
a unary FA-foundational binary relation with $n=0$, as defined in
\fullref{Subsection}{subsec:foundational}.

So assume without loss of generality that $\ext{Q}$ is infinite and
let $(a^*,\phi)$ be an injective unary FA-pre\-sent\-a\-tion for
$(\ext{Q},\leq)$. Suppose the diagram for $(a^,\phi)$ has $D$
rows. Each element of $\Lambda(\rre,\phi)$ is an arrow between two points in the diagram.

Let $Q'$ be those points represented by words in the leftmost column
$C[0]$ of the diagram; that is, $Q' = (C[0])\phi$. For $k =
0,\ldots,D-1$ and $i \in \nset$, let $p^{(k)}_i$ be the element
represented by the unique word in the row $B[k]$ and column $C[i]$;
that is, $p^{(k)}_i = (B[k] \cap C[i])\phi$. Let $\ext{P}_k = (B[k] -
C[0])\phi$ and $P_k = (B[k] \cap C[1,5])\phi$.

As a consequence of \fullref{Arrow rules}{arrowrule:up} and
\ref{arrowrule:down}, for any $\sigma \in \{-\infty,-1,0,+1,+\infty\}$
and $k, l \in \{0,\ldots,D-1\}$, either there is an $\ext{S}_{\sigma}$
connection from from $\ext{P}_k$ to $\ext{P}_l$, or no edge that is
part of an $\ext{S}_{\sigma}$ connection runs from $\ext{P}_k$ to
$\ext{P}_l$. Similarly, for any $q \in Q'$ and $k \in
\{0,\ldots,D-1\}$, either there is a $\ext{T}_{+\infty}$ connection
from $q$ to $\ext{P}_k$, or no edge that is part of an
$\ext{T}_{+\infty}$ connection runs from $q$ to $\ext{P}_k$, and
either there is a $\ext{U}_{-\infty}$ connection from $\ext{P}_k$ to
$q$, or no edge that is part of an $\ext{U}_{-\infty}$ connection runs
from $\ext{P}_k$ to $q$.

Let $Q = Q' \cup \bigcup_{k=0}^{D-1} P_k$, and let $\rr$ be the
restriction of $\rre$ to $Q$. Then $(Q,\rr)$ is a finite binary
relation equipped with distinguished subsets
$P_0,\ldots,P_{D-1}$. Furthermore, the conditions on connections
between the sets $P_k$ hold (as restrictions of the conditions on
connections between the sets $\ext{P}_k$ in $(\ext{Q},\rre)$) by the
observations in the previous paragraph. Hence $(Q,\rr)$ is a unary
FA-foundational binary relation with seeds $P_0,\ldots,P_{D-1}$. It is
easy to see that propagating $(Q,\rr)$ yields $(\ext{Q},\rre)$.
\end{proof}

\subsection{Preservation of properties}

Preparatory to the characterization results in the next section, we
prove that various properties are preserved in passing from $(Q,\rr)$
to $(\ext{Q},\rre)$ and vice versa. The key to several of the proofs
is the following result:

\begin{lemma}
\label{lem:isomorphicpairs}
For any $x,y \in \ext{Q}$, there exist $x',y' \in Q$ such that:
\begin{enumerate}
\item if $x = y$, then $x'=y'$;
\item the map $x \mapsto x'$ and $y \mapsto y'$ is an isomorphism
  between the induced substructures $\{x,y\}$ and $\{x',y'\}$.
\end{enumerate}
\end{lemma}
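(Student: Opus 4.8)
The plan is to use the column/row structure in the propagated relation $(\ext{Q},\rre)$ to move any pair of points into the seed region $Q$ while preserving the induced substructure on that pair. The key observation is that in $(\ext{Q},\rre)$ every element is of the form $q \in Q'$ or $p^{(k)}_i$ for $k \in \{0,\dots,n-1\}$ and $i \in \nset$, and by the way connections propagate, whether $x \rre y$ holds depends only on (i) which of $Q'$ or which seed-index $k$ each point belongs to, and (ii) a bounded amount of information about the difference of the seed-subscripts (namely, for a pair of seed-elements $p^{(k)}_i, p^{(l)}_j$, whether $j-i$ is $\leq -2$, $-1$, $0$, $1$, or $\geq 2$; and for a mixed pair, whether the seed-subscript is $1$ or $\geq 2$).

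First I would split into cases according to the types of $x$ and $y$. If both lie in $Q'$, take $x' = x$, $y' = y$; there is nothing to do. If $x = q \in Q'$ and $y = p^{(k)}_i$ (or vice versa), set $y' = p^{(k)}_1$ if $i = 1$ and $y' = p^{(k)}_2$ if $i \geq 2$, and $x' = q$; since $\ext{T}_\sigma$ and $\ext{U}_\sigma$ connections depend only on whether the subscript is $1$ or $\geq 2$, the edges between $\{x,y\}$ agree with those between $\{x',y'\}$, so the induced substructures are isomorphic via $x \mapsto x'$, $y \mapsto y'$. The main case is $x = p^{(k)}_i$, $y = p^{(l)}_j$. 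Here I would choose $x'$ and $y'$ in the seeds $P_k$, $P_l$ with subscripts in $\{1,\dots,5\}$ so that $j' - i'$ agrees with $j - i$ whenever $|j-i| \leq 1$, and has the same sign with $|j'-i'| \geq 2$ whenever $|j - i| \geq 2$. Concretely, if $j - i = d$ with $|d| \leq 1$ one can take $\{i',j'\} = \{2, 2+d\} \subseteq \{1,\dots,5\}$ (when $k \neq l$) — though when $k = l$ some care is needed because $i = j$ forces $x = y$, handled by condition~(1), and a single subscript (say $i' = j' = 2$) suffices. If $|d| \geq 2$, take, say, $i' = 1$, $j' = 4$ when $d > 0$ and $i' = 4$, $j' = 1$ when $d < 0$; these lie in $\{1,\dots,5\}$ and have absolute difference $3 \geq 2$. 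In every sub-case the relevant $\ext{S}_\sigma$ connection present from $\ext{P}_k$ to $\ext{P}_l$ restricts to the corresponding edge between $x'$ and $y'$ (the edge is there by definition of the connection), and conversely no spurious edge appears; symmetrically for edges from $y'$ to $x'$. Hence $x \mapsto x'$, $y \mapsto y'$ is the required isomorphism, and when $x = y$ the construction gives $x' = y'$.

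The main obstacle is bookkeeping: one must verify that in each of the finitely many sub-cases the chosen subscripts genuinely lie in $\{1,\dots,5\}$ and that the edge present/absent in $(\ext{Q},\rre)$ between $x$ and $y$ is exactly matched by the edge present/absent between $x'$ and $y'$, for both directions of the (possibly asymmetric) relation. The number five for the seed size is precisely what makes this possible: with $|d| \leq 1$ we need at least three consecutive subscripts available (to realize differences $-1, 0, +1$ around a common anchor), and with $|d| \geq 2$ we need two subscripts at distance $\geq 2$ inside the seed, and these constraints can be met simultaneously, with room to spare, inside a five-element block. I would present the seed-element case as the substantive argument and dispatch the other two cases in a sentence each.
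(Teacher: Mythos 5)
Your proposal is correct and follows essentially the same route as the paper: split into cases according to whether $x$ and $y$ lie in $Q'$ or in seeds, and for seed elements pick representatives in $\{1,\ldots,5\}$ whose subscript difference lies in the same class ($\leq -2$, $-1$, $0$, $+1$, $\geq 2$) as $j-i$, so that exactly the same $\ext{S}_\sigma$, $\ext{T}_\sigma$, $\ext{U}_\sigma$ connections govern both pairs (the paper anchors $x'$ at subscript $3$ and varies $y'$ over $1,\ldots,5$, which is only cosmetically different from your choices). The only point worth making explicit in a write-up is that the isomorphism of induced substructures must also preserve self-loops ($x \rre x$ iff $x' \rr x'$), which the paper checks via the $\ext{S}_0$ connection from $\ext{P}_k$ to itself and which your ``bounded information'' observation already covers.
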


\begin{proof}
Consider three cases separately, depending on whether none, one, or
both of $x$ and $y$ lie in $Q$:
\begin{enumerate}

\item Suppose both $x$ and $y$ lie in $Q$. Then let $x'=x$ and $y' = y$;
there is nothing to prove.

\item Suppose only one of $x$ and $y$ lies in $Q$; without loss of
  generality, assume $x \in Q'$ and $y \in \ext{Q} - Q'$. Then $y =
  p^{(k)}_i \in \ext{P}_k$ for some $k \in \{0,\ldots,n-1\}$ and $i
  \in \nset$. Let $x' = x$ and $y' = p^{(k)}_2$. Then the following
  are equivalent: (1) $x' \rr y'$; (2) there is an ${T}_{+\infty}$
  connection from $x'$ to $P_k$; (3) there is an $\ext{T}_{+\infty}$
  connection from $x$ to $\ext{P}_k$; (4) $x \rre y$. Similarly, by
  $U_{-\infty}$ and $\ext{U}_{-\infty}$ connections from $P_k$ to $x$ and
  $\ext{P}_k$ to $x$, we see that $y' \rr x'$ if and only if $y \rre
  x$.  Similarly, by considering $S_{0}$ and $\ext{S}_{0}$ connections
  from $P_k$ to $P_k$ and $\ext{P}_k$ to $\ext{P}_k$, we see that $y'
  \rr y'$ if and only if $y \rre y$. Hence in this second case the map
  is an isomorphism of induced substructures.

\item Suppose neither $x$ nor $y$ lies in $Q$. Then $x,y \in \ext{Q} - Q'$
and hence $x = p^{(k)}_i$ and $y = p^{(l)}_j$ for some $k,l \in
\{0,\ldots,n-1\}$ and $i,j \in \nset$. Let $x' = p^{(k)}_3$ and let
\begin{equation}
\label{eq:isomorphicpairs1}
y' = \left\{\begin{array}{ll}
p^{(l)}_1 & \text{if $i \leq j-2$},\\
p^{(l)}_2 & \text{if $i = j-1$},\\
p^{(l)}_3 & \text{if $i = j$},\\
p^{(l)}_4 & \text{if $i = j+1$},\\
p^{(l)}_5 & \text{if $i \geq j+2$}.
\end{array}\right.
\end{equation}
Then the following are equivalent: (1) $x' \rr y'$; (2) there is an
$S_\sigma$ connection from $P_k$ to $P_l$, where
\[ 
\sigma = \left\{\begin{array}{rl}
\infty & \text{if $i \leq j-2$},\\
+1 & \text{if $i = j-1$},\\
0 & \text{if $i = j$},\\
-1 & \text{if $i = j+1$},\\
-\infty & \text{if $i \geq j+2$};
\end{array}\right.
\]
(3) there is an $\ext{S}_\sigma$ connection from $\ext{P}_k$ to
$\ext{P}_l$; (4) $x \rre y$ (by the choice of $y'$ in
\eqref{eq:isomorphicpairs1}). Similarly, $y' \rr x'$ if and only if $y
\rre x$. Finally, considering the presence or absence of an
$\ext{S}_0$ connection from $\ext{P}_k$ to $\ext{P}_k$ shows that $x'
\rr x'$ if and only if $x \rre x$; similarly, $y' \rr y'$ if and only
if $y \rre y$.  Hence in this third case the map is an isomorphism of
induced substructures.
\end{enumerate}
\end{proof}

Since reflexivity, symmetry, and anti-symmetry are defined by axioms
over two variables, the following three lemmata follow easily from
\fullref{Lemma}{lem:isomorphicpairs}.

\begin{lemma}
\label{lem:extreflexive}
The unary FA-foundational binary relation $(Q,\rr)$ is reflexive if and
only if the propagated binary relation $(\ext{Q},\rre)$ is reflexive.
\end{lemma}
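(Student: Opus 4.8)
The plan is to derive this directly from \fullref{Lemma}{lem:isomorphicpairs}, exploiting the fact that reflexivity is the property ``$(\forall x)\, x \mathrel{\rr} x$'', i.e.\ an axiom quantifying over a single variable and hence certainly over at most two. First I would prove the forward implication: assume $(Q,\rr)$ is reflexive and let $x \in \ext{Q}$ be arbitrary; apply \fullref{Lemma}{lem:isomorphicpairs} to the pair $x = y$ to obtain $x' = y' \in Q$ (using clause~(1) of that lemma) such that $\{x\} \to \{x'\}$ is an isomorphism of induced substructures. Since $(Q,\rr)$ is reflexive we have $x' \mathrel{\rr} x'$, and since the map is an isomorphism of the induced substructures this transfers to $x \mathrel{\rre} x$. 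As $x$ was arbitrary, $(\ext{Q},\rre)$ is reflexive.

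For the converse, I would note that $\rr$ is the restriction of $\rre$ to $Q$ (as established in \fullref{Subsection}{subsec:propagating}, where each edge of $\rr$ reappears in $\rre$ and no new edges among elements of $Q$ are introduced), so if $(\ext{Q},\rre)$ is reflexive then in particular $q \mathrel{\rre} q$ for every $q \in Q$, whence $q \mathrel{\rr} q$; thus $(Q,\rr)$ is reflexive. Alternatively, one could phrase the converse uniformly with the forward direction by observing that $Q \subseteq \ext{Q}$ and that for $x \in Q$ the induced substructure $\{x\}$ is literally the same in $(Q,\rr)$ and $(\ext{Q},\rre)$.

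There is essentially no obstacle here: the entire content is packaged into \fullref{Lemma}{lem:isomorphicpairs}, and the only thing to check is the trivial observation that a one-element induced substructure records exactly whether the loop at that element is present, which is precisely reflexivity at that point. The lemma is stated precisely so that Lemmata~\ref{lem:extreflexive}, \ref{lem:extsymmetric}-type statements, and the anti-symmetry analogue all follow by the same two-line argument; the only care needed is to invoke clause~(1) of \fullref{Lemma}{lem:isomorphicpairs} so that the diagonal pair $x = y$ maps to a diagonal pair $x' = y'$, ensuring the loop at $x$ corresponds to a genuine loop (rather than an edge between two distinct points) at $x'$.
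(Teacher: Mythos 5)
Your proposal is correct and matches the paper's approach exactly: the paper likewise derives Lemma~\ref{lem:extreflexive} (together with the symmetry and anti-symmetry analogues) directly from Lemma~\ref{lem:isomorphicpairs}, on the grounds that reflexivity is an axiom over at most two variables. Your explicit use of clause~(1) to keep the diagonal pair diagonal, and the observation that the converse follows because $\rr$ is the restriction of $\rre$ to $Q$, fill in precisely the details the paper leaves implicit.
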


\begin{lemma}
\label{lem:extsymmetric}
The unary FA-foundational binary relation $(Q,\rr)$ is symmetric if and
only if the propagated binary relation $(\ext{Q},\rre)$ is symmetric.
\end{lemma}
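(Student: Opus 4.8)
The plan is to derive both directions from \fullref{Lemma}{lem:isomorphicpairs}, in exactly the same way as \fullref{Lemma}{lem:extreflexive}, exploiting the fact that symmetry is expressed by the axiom $(\forall x)(\forall y)\,(x \rr y \rightarrow y \rr x)$, which quantifies over only two variables; hence its truth on a pair $\{x,y\}$ is an intrinsic property of the two-element induced substructure on $\{x,y\}$.

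For the direction ``$(\ext{Q},\rre)$ symmetric $\Rightarrow$ $(Q,\rr)$ symmetric'', I would first observe that $(Q,\rr)$ is precisely the substructure of $(\ext{Q},\rre)$ induced on $Q$: as noted in \fullref{Subsection}{subsec:propagating}, $p \rr q$ if and only if $p \rre q$ for all $p,q \in Q$. Symmetry passes to induced substructures, so this implication is immediate. For the converse, assume $(Q,\rr)$ is symmetric and let $x,y \in \ext{Q}$ satisfy $x \rre y$. Apply \fullref{Lemma}{lem:isomorphicpairs} to obtain $x',y' \in Q$ for which $x \mapsto x'$, $y \mapsto y'$ is an isomorphism of the induced substructures $\{x,y\}$ and $\{x',y'\}$. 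Tracking the edge across this isomorphism: $x \rre y$ forces $x' \rr y'$; symmetry of $(Q,\rr)$ then gives $y' \rr x'$; and applying the same isomorphism in reverse gives $y \rre x$. Since $x$ and $y$ were arbitrary, $(\ext{Q},\rre)$ is symmetric.

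There is essentially no obstacle: the work has already been done in \fullref{Lemma}{lem:isomorphicpairs}, which guarantees that every two-element induced substructure of $(\ext{Q},\rre)$ is realised inside $(Q,\rr)$ (and, trivially, conversely), so any property defined by a universal two-variable axiom transfers in both directions. The only point to keep in mind is that \fullref{Lemma}{lem:isomorphicpairs} is invoked with no constraint relating $x$ and $y$, which is exactly what is needed since the symmetry axiom must be checked on ordered pairs of possibly distinct elements.
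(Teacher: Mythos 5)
Your proposal is correct and matches the paper's approach exactly: the paper dispatches this lemma (together with the reflexivity and anti-symmetry analogues) by the single remark that these properties are defined by two-variable axioms and hence follow from \fullref{Lemma}{lem:isomorphicpairs}. You have simply spelled out the details that the paper leaves implicit, including the correct observation that the reverse direction is immediate because $\rr$ is the restriction of $\rre$ to $Q$.
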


\begin{lemma}
\label{lem:extantisymmetric}
The unary FA-foundational binary relation $(Q,\rr)$ is anti-symmetric if and
only if the propagated binary relation $(\ext{Q},\rre)$ is anti-symmetric.
\end{lemma}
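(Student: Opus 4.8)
The plan is to obtain both directions from \fullref{Lemma}{lem:isomorphicpairs}, exactly as \fullref{Lemmata}{lem:extreflexive} and~\ref{lem:extsymmetric} are obtained.

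For the ``only if'' direction, recall from \fullref{Subsection}{subsec:propagating} that $p \rr q \iff p \rre q$ for all $p,q \in Q$, so $(Q,\rr)$ is precisely the induced substructure of $(\ext{Q},\rre)$ on $Q$. Anti-symmetry is a universally quantified condition on pairs, of the form $\forall x\,\forall y\,\bigl((x \rre y \land y \rre x) \to x = y\bigr)$, and every such condition is inherited by induced substructures; hence anti-symmetry of $(\ext{Q},\rre)$ immediately gives anti-symmetry of $(Q,\rr)$.

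For the ``if'' direction, assume $(Q,\rr)$ is anti-symmetric and take $x,y \in \ext{Q}$ with $x \rre y$ and $y \rre x$. By \fullref{Lemma}{lem:isomorphicpairs} there are $x',y' \in Q$ such that $x \mapsto x'$, $y \mapsto y'$ is an isomorphism of the induced substructures on $\{x,y\}$ and $\{x',y'\}$; transporting the two hypotheses across this isomorphism yields $x' \rr y'$ and $y' \rr x'$, so $x' = y'$ by anti-symmetry of $(Q,\rr)$. It then remains only to pull $x' = y'$ back to $x = y$, and this is the one step that warrants attention — indeed the only obstacle worth flagging. An isomorphism between $\{x,y\}$ and $\{x',y'\}$ is in particular a bijection, so $x \neq y$ would force $x' \neq y'$; equivalently, inspecting cases~2 and~3 of the proof of \fullref{Lemma}{lem:isomorphicpairs} exhibits $x'$ and $y'$ either in distinct seeds or with distinct subscripts inside a single seed whenever $x \neq y$, so that condition~(1) of that lemma in fact holds as a biconditional $x = y \iff x' = y'$. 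Using \fullref{Lemma}{lem:isomorphicpairs}(1) in this effective ``iff'' form, $x' = y'$ gives $x = y$, and the proof is complete; the transfer of the two ``$\rre$''-edges across the isomorphism is then wholly routine.
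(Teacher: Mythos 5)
Your proof is correct and follows exactly the route the paper intends: the paper derives this lemma (together with the reflexivity and symmetry versions) directly from Lemma~\ref{lem:isomorphicpairs} on the grounds that anti-symmetry is a two-variable axiom, which is precisely the transfer argument you carry out. Your extra care over the step $x'=y'\Rightarrow x=y$ is well placed and correctly resolved --- the map of Lemma~\ref{lem:isomorphicpairs} being an isomorphism of induced substructures (hence a bijection) forces $x\neq y\Rightarrow x'\neq y'$, as one can also verify directly from cases~2 and~3 of its proof.
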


The next result is the analogue of
\fullref{Lemmata}{lem:extreflexive}--\ref{lem:extantisymmetric} for
transitivity.

\begin{lemma}
\label{lem:exttransitive}
The unary FA-foundational binary relation $(Q,\rr)$ is transitive if and
only if the propagated binary relation $(\ext{Q},\rre)$ is transitive.
\end{lemma}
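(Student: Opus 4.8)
The plan is to establish the two implications separately. The forward implication---that transitivity of $(\ext{Q},\rre)$ forces transitivity of $(Q,\rr)$---is immediate: by the construction of $\rre$ we have $p \rr q \iff p \rre q$ for all $p,q \in Q$, so $(Q,\rr)$ is the substructure of $(\ext{Q},\rre)$ induced on the subset $Q$, and transitivity passes to induced substructures.

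For the converse---transitivity of $(Q,\rr)$ forces transitivity of $(\ext{Q},\rre)$---the idea is to prove a three-element strengthening of \fullref{Lemma}{lem:isomorphicpairs}: for any $x,y,z \in \ext{Q}$ there exist $x',y',z' \in Q$ such that the assignment $x \mapsto x'$, $y \mapsto y'$, $z \mapsto z'$ respects equality (if $x=y$ then $x'=y'$, and likewise for the other two pairs) and is an isomorphism between the induced substructures on $\{x,y,z\}$ and on $\{x',y',z'\}$. Granting this, transitivity of $(\ext{Q},\rre)$ follows at once: given $x \rre y$ and $y \rre z$, pick $x',y',z'$ as above; then $x' \rr y'$ and $y' \rr z'$, whence $x' \rr z'$ by transitivity of $(Q,\rr)$, and therefore $x \rre z$. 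Coincidences among $x,y,z$ (in particular $x=z$) cause no trouble, since the assignment respects equality.

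To build $x',y',z'$ I would use the following structural fact, read off from the propagation rules and \fullref{Figure}{fig:extensions}: whether an edge of $\rre$ joins $p^{(k)}_i \in \ext{P}_k$ to $p^{(l)}_j \in \ext{P}_l$ depends only on which of the connections $\ext{S}_{-\infty},\ext{S}_{-1},\ext{S}_0,\ext{S}_{+1},\ext{S}_{+\infty}$ holds from $\ext{P}_k$ to $\ext{P}_l$, together with which of the cases $j-i\le-2$, $j-i=-1$, $j-i=0$, $j-i=+1$, $j-i\ge+2$ occurs; and whether an edge joins $q\in Q'$ to $p^{(k)}_i$ (in either direction) depends only on the relevant $\ext{T}_0$, $\ext{T}_{+\infty}$ (respectively $\ext{U}_0$, $\ext{U}_{-\infty}$) connections, together with whether $i=1$ or $i\ge2$. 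Hence it suffices to choose $x',y',z'$ so that every element of $Q'$ among $x,y,z$ is left fixed; elements lying in a common seed $\ext{P}_k$ are mapped into $P_k=\{p^{(k)}_1,\ldots,p^{(k)}_5\}$ preserving their relative order, their pairwise index-differences clamped to the range $\{\le-2,-1,0,+1,\ge+2\}$, and---in the case that $x,y,z$ also contains a $Q'$-element---whether each such index equals $1$ or is at least $2$; and index-differences between distinct seeds, clamped in the same way, are likewise preserved. Since $x,y,z$ is only a triple, at most one seed can contain two or more of these elements, so only a short case analysis is needed, and in every case the five positions suffice. The one genuinely tight configuration is three elements whose clamped differences force two consecutive gaps of size at least $2$---for instance $p^{(k)}_1,p^{(k)}_3,p^{(k)}_5$ lying in a single seed---where one must use the positions $1$, $3$, and $5$; this is precisely why the seeds are required to have five elements, and where the argument would fail were they to have only four.

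I expect the main obstacle to be the bookkeeping in this case analysis, in particular the configuration of two elements sharing a seed together with a $Q'$-element, where one must simultaneously respect the clamped index-difference between the two seed elements and the ``equals $1$ versus at least $2$'' datum for each of them, all within five positions, and where one still has to verify the isomorphism property for all of the (up to) nine directed edges, loops included. The conceptual core---that the clamped-difference data and the ``$1$ versus $\ge2$'' data are the only data an $\rre$-edge can depend upon---is immediate from inspecting the propagation rules, after which only routine checking remains.
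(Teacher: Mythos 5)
Your proposal is correct, but it takes a genuinely different route from the paper's. The paper proves the nontrivial direction by a direct case analysis on compositions of connections: given an edge of $\ext{S}_\rho$ from $\ext{P}_k$ to $\ext{P}_l$ followed by an edge of $\ext{S}_\sigma$ from $\ext{P}_l$ to $\ext{P}_m$, it pulls both back to connections $S_\rho$ and $S_\sigma$ in $(Q,\rr)$, applies transitivity there to force one or more connections $S_\tau$ from $P_k$ to $P_m$, and pushes these forward to $\ext{S}_\tau$ connections containing the required edge; the resulting ``composition table'' of connections (including the $T$- and $U$-cases) is recorded explicitly, with one case worked in detail. You instead upgrade \fullref{Lemma}{lem:isomorphicpairs} from pairs to triples --- every triple in $\ext{Q}$ has an isomorphic copy in $Q$ obtained by clamping index differences into $\{\leq -2,-1,0,+1,\geq +2\}$ and recording the ``$=1$ versus $\geq 2$'' data where a $Q'$-element is involved --- and then observe that transitivity, being a universal sentence in three variables, transfers automatically. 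Your reduction is sound: the propagation rules do make every $\rre$-edge depend only on the data you preserve, the clamping arithmetic always fits three indices into $\{1,\ldots,5\}$ (since $\min(\hat g_1+\hat g_2,2)=\min(g_1+g_2,2)$ for the two sorted gaps), and your tight configuration $p^{(k)}_1,p^{(k)}_3,p^{(k)}_5$ is the same five-element obstruction the paper isolates in its remark following the proof. Your approach is more systematic and yields, for free, preservation of any three-variable universal property; the paper's approach costs a longer case enumeration but produces the explicit table of enforced connections, which documents exactly which combinations of connections can coexist in a transitive propagated relation.
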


\begin{proof}
Suppose first that $(\ext{Q},\rre)$ is transitive. Then $(Q,\rr)$ is
transitive since $\rr$ is the restriction of $\rre$ to $Q$.

Now suppose that $(Q,\rr)$ is transitive. Showing that
$(\ext{Q},\rre)$ is transitive requires consideration of many cases
similar to each other. Suppose there is an edge from $p^{(k)}_h$ to
$p^{(l)}_i$ and an edge from $p^{(l)}_i$ to $p^{(m)}_j$. To prove
transitivity, it is necessary to show that there is an edge from
$p^{(k)}_h$ to $p^{(m)}_j$. The different cases arise from the various
possible connections in which these edges lie.

Every case proceeds in the same way: the edge from $p^{(k)}_h$ to
$p^{(l)}_i$ and the edge from $p^{(l)}_i$ to $p^{(m)}_j$ lie in
connections $\ext{S}_\rho$ and $\ext{S}_\sigma$. By the definition of
propagation, there is a $S_\rho$ connection from $P_k$ to $P_l$ and an
$S_\sigma$ connection from $P_l$ to $P_m$. Transitivity in $(Q,\rr)$
then forces one or more connections $S_\tau$ to hold between $P_k$ and
$P_m$, and propagation then requires $\ext{S}_\tau$ to hold between
$\ext{P}_k$ and $\ext{P}_m$, and one of these connections contains an
edge from $p^{(k)}_h$ to $p^{(m)}_j$. The cases involving one or two
edges to or from elements of $Q'$ are similar.

We will limit ourselves to proving one exemplary case in full
detail and summarizing the others. Consider the case where the edge from $p^{(k)}_h$ to
$p^{(l)}_i$ lies in an $\ext{S}_\infty$ connection and the edge from
$p^{(l)}_i$ to $p^{(m)}_j$ lies an $\ext{S}_{-1}$ connection. That is, $h
+ 2 \leq i$ and $i-1 = j$. Hence $j \geq h+1$.

By the definition of propagation, in $(Q,\rr)$ there is an
${S}_\infty$ connection from ${P}_k$ to ${P}_l$ and an ${S}_{-1}$
connection from ${P}_l$ to ${P}_m$. In particular, there are edges
from $p^{(k)}_1$ to $p^{(l)}_3$, from $p^{(l)}_3$ to
$p^{(m)}_2$, from $p^{(k)}_1$ to $p^{(l)}_4$, and from
$p^{(l)}_4$ to $p^{(m)}_3$. Hence, by the transitivity of
$(Q,\rr)$, there are edges from $p^{(k)}_1$ to $p^{(m)}_2$, and from
$p^{(k)}_1$ to $p^{(m)}_3$. Therefore there are $S_{+\infty}$ and
$S_{+1}$ connections from $P_k$ to $P_m$.

Therefore, by the definition of propagation, there are
$\ext{S}_{+\infty}$ and $\ext{S}_{+1}$ connections from
$\ext{P}_k$ to $\ext{P}_m$. Thus there are edges from
$p^{(k)}_h$ to $p^{(m)}_{j}$ for $j = h+1$ and for $j \geq h+2$, and
thus for all $j \geq h+1$. 

Hence the transitivity condition holds when the first edge from
$p^{(k)}_h$ to $p^{(l)}_i$ lies in $\ext{S}_\infty$ connection from
$\ext{P}_k$ to $\ext{P}_l$ and the second lies edge from $p^{(l)}_i$
to $p^{(m)}_j$ in an $\ext{S}_{-1}$ connection from $\ext{P}_l$ to
$\ext{P}_m$.

The various cases are summarized in
\fullref{Table}{tbl:enforcedconnections}. 

We remark on two further exemplary cases: if there is a $T_0$ connection from $q
\in Q'$ to $P_l$ and a $U_0$ connection from $P_l$ to $s \in
Q'$, then transitivity means there is a single edge (a `$Q'$-edge')
from $q$ to $s$. If there is a $T_0$ connection from $q \in Q'$ to
$P_l$ and a $U_{-\infty}$ connection from $P_l$ to $s \in Q'$,
then transitivity does \emph{not} enforce an edge from $q$ to $s$.
\begin{table}[t]
\centerline{\(\displaystyle\begin{array}{c|c|c|c|c|c|c|c|c|c|}
 & S_{-\infty} & S_{-1} & S_0 & S_{+1} & S_{+\infty} & T_0 & T_{+\infty} & U_0 & U_{-\infty} \\
\hline
S_{-\infty} & S_{-\infty} & S_{-\infty} & S_{-\infty} & S_{-\infty,-1} & S_{*} & \text{---} & \text{---} & U_{-\infty} & U_{-\infty} \\
\hline
S_{-1} & S_{-\infty} & S_{-\infty} & S_{-1} & S_0 & S_{+1,+\infty} & \text{---} & \text{---} & U_{-\infty} & U_{-\infty} \\
\hline
S_0 & S_{-\infty} & S_{-1} & S_0 & S_{+1} & S_{+\infty} & \text{---} & \text{---} & U_{-\infty} & U_0 \\
\hline
S_{+1} & S_{-\infty,-1} & S_{0} & S_{+1} & S_{+\infty} & S_{+\infty} & \text{---} & \text{---} & \text{None} & U_{0,-\infty} \\
\hline
S_{+\infty} & S_{*} & S_{+1,+\infty} & S_{+\infty} & S_{+\infty} & S_{+\infty} & \text{---} & \text{---} & \text{None} & U_{0,-\infty} \\
\hline
T_0 & \text{None} & \text{None} & T_0 & T_{+\infty} & T_{+\infty} & \text{---} & \text{---} & \text{$Q'$ edge} & \text{None} \\
\hline
T_{+\infty} & T_{0,+\infty} & T_{0,+\infty} & T_{+\infty} & T_{+\infty} & T_{+\infty} & \text{---} & \text{---} & \text{None} & \text{$Q'$-edge} \\
\hline
U_0 & \text{---} & \text{---} & \text{---} & \text{---} & \text{---} & S_0 & S_{+\infty} & \text{---} & \text{---} \\
\hline
U_{-\infty} & \text{---} & \text{---} & \text{---} & \text{---} & \text{---} & S_{-1,-\infty} & S_{*} & \text{---} & \text{---} \\
\hline
\end{array}\)}
\caption{Connections enforced by the transitivity of $(Q,\rr)$. Row
  labels show the connection between $P_k$ (or an element of $Q'$) and
  $P_l$ (or an element of $Q'$), column labels show the connection
  between $P_l$ (or an element of $Q'$) and $P_m$ (or an element of
  $Q'$). The cell in a particular row and column shows the
  connection(s) between $P_k$ (or an element of $Q'$) and $P_m$ (or an
  element of $Q'$) enforced by transitivity. The notation $S_*$
  abbreviates $S_{-\infty},S_{-1},S_0,S_1,S_{+\infty}$; the notation
  $S_{+1,+\infty}$ abbreviates $S_{+1},S_{+\infty}$; the notation
  $S_{-\infty,-1}$ abbreviates $S_{-\infty},S_{-1}$. A table for the
  connections required for transitivity of $(\ext{Q},\rre)$ can be
  obtained from this table by replacing each $S_\sigma$, $T_\sigma$,
  or $U_\sigma$ by the corresponding $\ext{S}_\sigma$,
  $\ext{T}_\sigma$, or $\ext{U}_\sigma$.}
\label{tbl:enforcedconnections}
\end{table}
\end{proof}

[The proof of \fullref{Lemma}{lem:exttransitive} shows why the sets
  $P_k$ contain five elements: to ensure that if there is an
  $S_{+\infty}$ connection from $P_k$ to $P_l$ and an $S_{+\infty}$
  connection from $P_l$ to $P_m$, then there is an $S_{+\infty}$
  connection from $P_k$ to $P_m$. This requires considering an edge
  from $p^{(k)}_1$ to $p^{(l)}_3$ and an edge from $p^{(l)}_3$ to
  $p^{(m)}_5$ and applying transitivity in $(Q,\rr)$ to get an edge
  from $p^{(k)}_1$ to $p^{(m)}_5$ and hence an $S_{+\infty}$
  connection from $P_k$ to $P_l$.]

\subsection{Orders}

Equipped with the lemmata from the previous subsection, we can now
characterize unary FA-pre\-sent\-a\-ble quasi-orders and partial
orders. Recall that a \defterm{quasi-order} is a binary relation that is
reflexive and transitive. The following characterization follows
immediately from the lemmata in the previous subsection.

\begin{proposition}
\label{prop:qosetchar}
A quasi-order is unary FA-pre\-sent\-a\-ble if and only if it can be
obtained by propagating a unary FA-foundational quasi-order.
\end{proposition}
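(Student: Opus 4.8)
The plan is to combine the characterization of unary FA-presentable binary relations (\fullref{Theorem}{thm:binrelchar}) with the preservation lemmata just proved, so that almost no new work is needed. A quasi-order is exactly a binary relation that is both reflexive and transitive, and we already know (\fullref{Lemma}{lem:extreflexive} and \fullref{Lemma}{lem:exttransitive}) that each of these properties is inherited in both directions when passing between a unary FA-foundational binary relation $(Q,\rr)$ and the propagated relation $(\ext{Q},\rre)$.

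For the ``only if'' direction, suppose $(\ext{Q},\rre)$ is a unary FA-presentable quasi-order. By \fullref{Theorem}{thm:binrelchar}, it is obtained by propagating some unary FA-foundational binary relation $(Q,\rr)$. Since $(\ext{Q},\rre)$ is reflexive and transitive, \fullref{Lemma}{lem:extreflexive} and \fullref{Lemma}{lem:exttransitive} show that $(Q,\rr)$ is also reflexive and transitive, i.e.\ a unary FA-foundational quasi-order. Thus $(\ext{Q},\rre)$ is obtained by propagating a unary FA-foundational quasi-order. For the ``if'' direction, suppose $(\ext{Q},\rre)$ is obtained by propagating a unary FA-foundational quasi-order $(Q,\rr)$. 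By \fullref{Theorem}{thm:binrelchar}, $(\ext{Q},\rre)$ is unary FA-presentable as a binary relation; and since $(Q,\rr)$ is reflexive and transitive, \fullref{Lemma}{lem:extreflexive} and \fullref{Lemma}{lem:exttransitive} applied in the forward direction show that $(\ext{Q},\rre)$ is reflexive and transitive, hence a quasi-order.

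There is essentially no obstacle here; the substance of the argument has been front-loaded into \fullref{Theorem}{thm:binrelchar} and, crucially, into the bidirectional preservation of transitivity in \fullref{Lemma}{lem:exttransitive} (which is why the seeds were chosen to have five elements). The only point requiring a sentence of care is that a unary FA-foundational quasi-order should be defined as precisely a unary FA-foundational binary relation that happens to be reflexive and transitive, so that ``propagating a unary FA-foundational quasi-order'' is a meaningful phrase; with that convention the proof is the two short paragraphs above, and indeed the statement is an immediate corollary of the preceding lemmata as the surrounding text already asserts.
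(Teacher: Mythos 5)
Your proof is correct and matches the paper's argument exactly: the paper's proof is the one-line remark that the result is immediate from Theorem~\ref{thm:binrelchar} together with Lemmata~\ref{lem:extreflexive} and~\ref{lem:exttransitive}, which is precisely the two-directional combination you spell out. No issues.
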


\begin{proof}
This is immediate from \fullref{Theorem}{thm:binrelchar} and
\fullref{Lemmata}{lem:extreflexive} and \ref{lem:exttransitive}.
\end{proof}

However, we can improve \fullref{Proposition}{prop:qosetchar} to the
following result:

\begin{theorem}
\label{thm:qosetchar}
A quasi-order is unary FA-pre\-sent\-a\-ble if and only if it can be
obtained by propagating a unary FA-foundational quasi-order in which
every seed $P_k$ is either an anti-chain (with none of
the $p^{(k)}_i$ being comparable), an ascending chain (with $p^{(k)}_1
< p^{(k)}_2 < p^{(k)}_3 < p^{(k)}_4 < p^{(k)}_5$), a descending chain
(with $p^{(k)}_1 > p^{(k)}_2 > p^{(k)}_3 > p^{(k)}_4 > p^{(k)}_5$), or
a strongly connected component (with $p^{(k)}_i \leq p^{(k)}_j$ for all
$i,j \in \{1,\ldots,5\}$).
\end{theorem}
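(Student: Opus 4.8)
The plan is as follows. The backward implication is immediate: a quasi-order obtained by propagating a unary FA-foundational quasi-order of the restricted form is in particular obtained by propagating a unary FA-foundational quasi-order, so it is unary FA-pre\-sent\-a\-ble by \fullref{Proposition}{prop:qosetchar}.

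For the forward implication, I would proceed as follows. Let $(\ext{Q},\rre)$ be a unary FA-pre\-sent\-a\-ble quasi-order; if it is finite there is nothing to prove (take $n = 0$ seeds), so assume it is infinite and, by \fullref{Theorem}{thm:allwords}, fix an injective unary FA-pre\-sent\-a\-tion $(a^*,\phi)$. I would then re-run the construction from the second part of the proof of \fullref{Theorem}{thm:binrelchar}, but basing the diagram on $2D$ rows, where $D$ is chosen as in \fullref{\S}{sec:pumping} (a multiple of the loop lengths of a recognizing automaton that also exceeds its number of states). Since $2D$ is again such a number, the construction goes through and produces a finite unary FA-foundational binary relation $(Q,\rr)$ with seeds $P_0,\dots,P_{2D-1}$ which propagates to $(\ext{Q},\rre)$; moreover $(Q,\rr)$ is reflexive and transitive, hence a unary FA-foundational quasi-order, by \fullref{Lemmata}{lem:extreflexive} and \ref{lem:exttransitive}. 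It then remains only to show that each seed $P_k$ has one of the four prescribed shapes.

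The heart of the argument is to relate the $2D$-row diagram to the $D$-row diagram. After the routine bookkeeping (comparing $m = c\cdot 2D + r$ with $m = c'\cdot D + r'$) one finds that the words occupying columns $C[1],C[2],C[3],\dots$ of a fixed row of the $2D$-row diagram occupy columns $C[2],C[4],C[6],\dots$ (or $C[3],C[5],C[7],\dots$, depending on which of the two cases $r$ falls into) of a fixed row of the $D$-row diagram. In particular, for any extended seed $\ext{P}_k$, the edge from $p^{(k)}_1$ to $p^{(k)}_2$ (whose presence defines the $\ext{S}_{+1}$ connection from $\ext{P}_k$ to itself) and the edge from $p^{(k)}_1$ to $p^{(k)}_3$ (whose presence defines the $\ext{S}_{+\infty}$ connection from $\ext{P}_k$ to itself) both translate, in the $D$-row diagram, into edges within one and the same row whose endpoints lie at least two columns apart --- that is, into edges belonging to the single $\ext{S}_{+\infty}$ connection of that row to itself. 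By the all-or-nothing nature of connections in a propagated structure, the $\ext{S}_{+1}$ and $\ext{S}_{+\infty}$ connections from $\ext{P}_k$ to $\ext{P}_k$ are therefore present or absent simultaneously; the same reasoning handles $\ext{S}_{-1}$ and $\ext{S}_{-\infty}$; and the $\ext{S}_0$ connection from $\ext{P}_k$ to itself is present because $\rre$ is reflexive. Consequently, within each extended seed $\ext{P}_k$, either all edges $p^{(k)}_i \rre p^{(k)}_j$ with $i<j$ are present or none is, and similarly for the edges with $i>j$. Restricting to the five-element seed $P_k$, this leaves exactly four cases: anti-chain (neither family present), ascending chain ($i<j$ all, $i>j$ none), descending chain ($i>j$ all, $i<j$ none), and strongly connected component (all present), as required.

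I expect the index bookkeeping to be the only real obstacle: one must check carefully that $2D$ is a legitimate number of rows, so that \fullref{Theorem}{thm:binrelchar} and the arrow rules apply without change, and then track faithfully which edges of the new foundational structure correspond, in the coarser $D$-row diagram, to parts of a single $\ext{S}_{\pm\infty}$ connection. Beyond its use via \fullref{Lemma}{lem:exttransitive}, transitivity plays no role in this shape analysis; it enters only to certify that $(Q,\rr)$ is a quasi-order.
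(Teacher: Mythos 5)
Your proof is correct, and its skeleton coincides with the paper's: the backward direction is exactly \fullref{Proposition}{prop:qosetchar}, and the forward direction re-runs the second part of the proof of \fullref{Theorem}{thm:binrelchar} and then reduces the shape of each seed to the question of which self-connections $\ext{S}_{\pm1}$, $\ext{S}_{\pm\infty}$, $\ext{S}_0$ are present, with the key point being that $\ext{S}_{+1}$ and $\ext{S}_{+\infty}$ (and dually $\ext{S}_{-1}$ and $\ext{S}_{-\infty}$) stand or fall together. Where you genuinely diverge is in how that equivalence is proved. The paper stays in the $D$-row diagram: it gets $\ext{S}_{+\infty}\Rightarrow\ext{S}_{+1}$ by pumping the pair $(a^l,a^{l+2D})$ down to $(a^l,a^{l+D})$ via \fullref{Pumping rule}{rule:down}, and gets the converse $\ext{S}_{+1}\Rightarrow\ext{S}_{+\infty}$ from transitivity of the quasi-order. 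Your doubled-diagram device instead turns both the defining edge of $\ext{S}_{+1}$ and that of $\ext{S}_{+\infty}$ for a $2D$-row seed into long arrows lying in one and the same $\ext{S}_{+\infty}$ connection of the $D$-row structure, so both implications follow from the arrow rules alone (equivalently, from \fullref{Pumping rules}{rule:up} and~\ref{rule:down} applied directly to $(a^{2D+k},a^{4D+k})$ and $(a^{2D+k},a^{6D+k})$). The bookkeeping you worry about does check out: $2D$ is again a multiple of the loop lengths exceeding the number of states, and the column translation is as you describe. What your route buys is the observation that the dichotomy on self-connections of seeds needs neither reflexivity nor transitivity --- it holds for arbitrary unary FA-presentable binary relations, with the quasi-order axioms entering only to certify that the foundational structure is a quasi-order and that $\ext{S}_0$ is present; the paper's route is shorter but leans on transitivity for one of the two implications.
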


\begin{proof}
Notice that in one direction the result has already been proven:
propagating such a unary FA-foundational quasi-order yields a unary
FA-pre\-sent\-a\-ble quasi-order by
\fullref{Proposition}{prop:qosetchar}.

Therefore let $(\ext{Q},\leq)$ be a unary FA-pre\-sent\-a\-ble quasi-order
and let $(a^*,\phi)$ be an injective unary FA-pre\-sent\-a\-tion. Follow the
second part of the proof of \fullref{Theorem}{thm:binrelchar} to
obtain a unary FA-foundational quasi-order $(Q,\leq)$, with
distinguished sets $P_k$, that, when propagated, yields
$(\ext{Q},\leq)$.  Consider some distinguished set $P_k$ and the
corresponding $\ext{P}_k$.

Suppose there is an $\ext{S}_{+\infty}$ connection from $\ext{P}_k$ to
itself. Then there is an edge from $p^{(k)}_1$ to $p^{(k)}_3$. Let
$a^l,a^m \in a^*$ be such that $a^l\phi = p^{(k)}_1$ and $a^m\phi =
p^{(k)}_3$. By the definition of $\ext{P}_k$ (in the proof of
\fullref{Theorem}{thm:binrelchar}), $b(a^l) = b(a^m) = k$ and $c(a^l)
= 1$ and $c(a^m) = 3$. Thus $m = l+2D$. That is, $(a^l,a^{l+2D}) \in
\Lambda(\leq,\phi)$. By \fullref{Pumping rule}{rule:down},
$(a^l,a^{l+D}) \in \Lambda(\leq,\phi)$. Notice that $b(a^{l+D}) = b(a^l)$ and $c(a^{l+D}) =
c(a^l) + 1 = 2$. Thus there is an edge from $a^l\phi = p^{(k)}_1$ to
$a^{l+D}\phi = p^{(k)}_2$. Hence there is an $\ext{S}_{+1}$ connection
from $\ext{P}_k$ to itself.

Similarly, one can show that if there is an $\ext{S}_{-\infty}$ from
$\ext{P}_k$ to itself, then there is an $\ext{S}_{-1}$ connection from
$\ext{P}_k$ to itself.

If there is an $\ext{S}_{+1}$ connection from $\ext{P}_k$ to itself,
there is an $\ext{S}_{+\infty}$ connection from $\ext{P}_k$ to itself
as a consequence of transitivity. Similarly, if there is an
$\ext{S}_{-1}$ connection from $\ext{P}_k$ to itself, there is an
$\ext{S}_{-\infty}$ connection from $\ext{P}_k$ to itself as a
consequence of transitivity.

Thus from $\ext{P}_k$ to itself, either there are both $\ext{S}_{+1}$
and $\ext{S}_{+\infty}$ connections or there are neither, and
similarly for $\ext{S}_{-1}$ and $\ext{S}_{-\infty}$ connections. Thus
there are four possibilities:
\begin{enumerate}

\item No connections
  $\ext{S}_{-\infty},\ext{S}_{-1},\ext{S}_{+1},\ext{S}_{+\infty}$ from
  $\ext{P}_k$ to itself. Then $\ext{P}_k$ and thus $P_k$ are
  antichains.

\item Connections $\ext{S}_{-\infty}$ and $\ext{S}_{-1}$ from
  $\ext{P}_k$ to itself, but neither $\ext{S}_{+1}$ nor
  $\ext{S}_{+\infty}$. Then $\ext{P}_k$ and thus $P_k$ are descending
  chains.

\item Connections $\ext{S}_{+\infty}$ and $\ext{S}_{+1}$ from
  $\ext{P}_k$ to itself, but neither $\ext{S}_{-1}$ nor
  $\ext{S}_{-\infty}$. Then $\ext{P}_k$ and thus $P_k$ are ascending
  chains.

\item All connections
  $\ext{S}_{-\infty},\ext{S}_{-1},\ext{S}_{+1},\ext{S}_{+\infty}$ from
  $\ext{P}_k$ to itself. Then $\ext{P}_k$ and thus $P_k$ are strongly
  connected components.

\end{enumerate}
\end{proof}

The preceding result yields the following decomposition result for
unary FA-presentable quasi-orders:

\begin{corollary}
\label{corol:qosetsareunions}
Every unary FA-presentable quasi-order decomposes as a finite disjoint
union of trivial quasi-orders, countably infinite ascending chains,
countably infinite descending chains, countably infinite anti-chains,
and countably infinite strongly connected components.
\end{corollary}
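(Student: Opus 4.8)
The plan is to deduce this decomposition directly from the structural classification in \fullref{Theorem}{thm:qosetchar}. Let $(\ext{Q},\leq)$ be a unary FA-presentable quasi-order. If it is finite, then it is a finite disjoint union of trivial quasi-orders (one for each element), and we are done; so assume $\ext{Q}$ is infinite. By \fullref{Theorem}{thm:qosetchar}, $(\ext{Q},\leq)$ is obtained by propagating a unary FA-foundational quasi-order $(Q,\leq)$ in which each seed $P_k$ (for $k \in \{0,\ldots,n-1\}$) is an anti-chain, an ascending chain, a descending chain, or a strongly connected component, and $\ext{Q} = Q' \cup \bigcup_{k=0}^{n-1} \ext{P}_k$ where each $\ext{P}_k = \{p^{(k)}_i : i \in \nset\}$.

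The key step is to argue that the decomposition into connected components of the underlying undirected graph of $(\ext{Q},\leq)$ has only finitely many pieces, each of which is one of the listed types. First I would observe that $Q' = (C[0])\phi$ is finite, so $Q'$ contributes at most $|Q'|$ elements, each lying in some component. Next, for each $k$, I would examine which inter-block and intra-block connections are present. If $\ext{P}_k$ has no edges to or from $Q'$ and no edges to or from any $\ext{P}_l$ with $l \neq k$ (and $\ext{P}_l$ likewise has no edges to $\ext{P}_k$), then $\ext{P}_k$ is a union of connected components on its own: by the case analysis in \fullref{Theorem}{thm:qosetchar}, such an $\ext{P}_k$ is a countably infinite anti-chain (so its connected components are countably many trivial quasi-orders), a countably infinite ascending chain, a countably infinite descending chain, or a countably infinite strongly connected component, matching the desired list. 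Otherwise $\ext{P}_k$ is linked (possibly via a chain of blocks and via $Q'$) to finitely many other blocks and to $Q'$; but since there are only $n$ blocks and $Q'$ is finite, all such "entangled" blocks together with $Q'$ form finitely many connected components, each of which is countable.

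The main obstacle — and the one point requiring real care — is showing that each of these finitely many "entangled" components, which may involve several infinite blocks $\ext{P}_k$ glued together and glued to $Q'$, is again one of the listed types (a trivial quasi-order, an infinite ascending or descending chain, an infinite anti-chain, or an infinite strongly connected component) rather than something genuinely new. Here I would use transitivity crucially: if $\ext{P}_k$ is connected to $\ext{P}_l$ by an $\ext{S}_\sigma$ connection with $\sigma \neq 0$ having an edge in a given direction, transitivity of $\leq$ together with \fullref{Table}{tbl:enforcedconnections} and the shape of the individual seeds forces the union $\ext{P}_k \cup \ext{P}_l$ to collapse into a single strongly connected component, or into a single chain, or forces comparabilities that are incompatible with having, say, both an infinite ascending sub-chain and an infinite descending sub-chain in the same component unless the whole thing is strongly connected (since a quasi-order has no "infinite fences"). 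A finite number of applications of this reasoning, bounded by $n$, reduces every entangled component to a single block-type; absorbing the finitely many $Q'$ elements changes nothing essential, since attaching finitely many points to a countable chain or strongly connected component or anti-chain, under transitivity, again yields a structure of the same type (in the anti-chain case, any comparability from a $Q'$ element would, by the $\ext{T}$/$\ext{U}$ connections and transitivity, force that $Q'$ element to be comparable to infinitely many seed elements and collapse matters into a chain or SCC). Collecting the finitely many components so obtained gives the claimed finite disjoint-union decomposition.
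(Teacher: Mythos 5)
There is a genuine gap here, and it begins with a misreading of what the corollary asserts. The intended decomposition is simply the partition of the domain into the finite set $Q'$ (contributing finitely many trivial, i.e.\ one-element, quasi-orders) and the sets $\ext{P}_0,\ldots,\ext{P}_{n-1}$, each of which, \emph{as an induced sub-quasi-order}, is a countably infinite anti-chain, ascending chain, descending chain, or strongly connected component by \fullref{Theorem}{thm:qosetchar}. No claim is made that these pieces are mutually unrelated, nor that they are the connected components of the underlying undirected graph; compare \fullref{Corollary}{corol:tournamentsareunions} and its proof, where the analogous ``finite disjoint union'' is applied to a tournament, whose pieces are all joined to one another by edges. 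Read this way, the corollary is immediate from \fullref{Theorem}{thm:qosetchar} and needs none of the machinery you introduce.

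The statement you actually set out to prove --- that the \emph{connected components} are finitely many and each of the listed types --- is false, so the obstacles you flag cannot be overcome. For a counterexample, take two seeds propagating to anti-chains $\ext{P}_0$ and $\ext{P}_1$ with a single $\ext{S}_0$ connection from $\ext{P}_0$ to $\ext{P}_1$ (so $p^{(0)}_i \leq p^{(1)}_i$ for all $i$) and no other non-reflexive relations: this is a reflexive, transitive, unary FA-presentable relation whose connected components are infinitely many two-element chains, which are neither trivial nor of any of the infinite types. Likewise an infinite fence $b_1 \leq a_1$, $b_1 \leq a_2$, $b_2 \leq a_2$, $b_2 \leq a_3,\ldots$ (two anti-chain seeds joined by $\ext{S}_0$ and $\ext{S}_{+1}$ connections) is a connected unary FA-presentable partial order that is not a chain, an anti-chain, or a strongly connected component; this refutes both your claim that quasi-orders admit no infinite fences and the central collapsing step, namely that transitivity forces an ``entangled'' union of blocks into a single listed type. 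The fix is not to repair that step but to prove the statement the corollary actually makes, which follows in one line from \fullref{Theorem}{thm:qosetchar}.
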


We can now characterize unary FA-presentable partial orders:

\begin{theorem}
\label{thm:posetchar}
A partial order is unary FA-pre\-sent\-a\-ble if and only if it can be
obtained by propagating a unary FA-foundational partial order in which
every distinguished set $P_k$ is either an anti-chain (with none of
the $p^{(k)}_i$ being comparable), an ascending chain (with $p^{(k)}_1
< p^{(k)}_2 < p^{(k)}_3 < p^{(k)}_4 < p^{(k)}_5$), or a descending chain
(with $p^{(k)}_1 > p^{(k)}_2 > p^{(k)}_3 > p^{(k)}_4 > p^{(k)}_5$).
\end{theorem}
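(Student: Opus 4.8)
The plan is to imitate the proof of \fullref{Theorem}{thm:qosetchar}, which already handles quasi-orders, and then use the additional hypothesis of anti-symmetry to rule out the strongly connected component case. One direction is immediate: propagating such a unary FA-foundational partial order yields a unary FA-pre\-sent\-a\-ble quasi-order by \fullref{Proposition}{prop:qosetchar}, and it is a partial order because anti-symmetry is preserved under propagation by \fullref{Lemma}{lem:extantisymmetric} (taking $(Q,\rr)$ to be a partial order, so anti-symmetric, forces $(\ext{Q},\rre)$ anti-symmetric). For the converse, let $(\ext{Q},\leq)$ be a unary FA-pre\-sent\-a\-ble partial order with injective unary FA-pre\-sent\-a\-tion $(a^*,\phi)$. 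Since a partial order is in particular a quasi-order, \fullref{Theorem}{thm:qosetchar} applies: following the second part of the proof of \fullref{Theorem}{thm:binrelchar} produces a unary FA-foundational quasi-order $(Q,\leq)$ with distinguished sets $P_k$ whose propagation is $(\ext{Q},\leq)$, and each seed $P_k$ is an anti-chain, an ascending chain, a descending chain, or a strongly connected component.

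It remains only to exclude the strongly connected component case. Suppose some seed $P_k$ were a strongly connected component. Then $p^{(k)}_1 \leq p^{(k)}_2$ and $p^{(k)}_2 \leq p^{(k)}_1$ both hold in $(Q,\leq) \subseteq (\ext{Q},\leq)$. But $p^{(k)}_1 \neq p^{(k)}_2$ (distinct columns of the diagram, or distinct elements of $\ext{Q}$ by injectivity of $\phi$), contradicting the anti-symmetry of $(\ext{Q},\leq)$. Hence no seed is a strongly connected component, so every seed is an anti-chain, an ascending chain, or a descending chain. By \fullref{Lemma}{lem:extantisymmetric}, since $(\ext{Q},\leq)$ is anti-symmetric so is $(Q,\leq)$, which together with reflexivity and transitivity (inherited as a restriction) makes $(Q,\leq)$ a partial order. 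Thus $(\ext{Q},\leq)$ is obtained by propagating a unary FA-foundational partial order of the required form.

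There is essentially no obstacle here: the heavy lifting was done in \fullref{Theorem}{thm:qosetchar}, and the only new content is the one-line observation that a strongly connected seed violates anti-symmetry, plus the routine check that the restriction $(Q,\leq)$ is genuinely a partial order (which follows from \fullref{Lemma}{lem:extantisymmetric} and the fact that reflexivity and transitivity restrict). The proof is therefore little more than an appeal to \fullref{Theorem}{thm:qosetchar} and \fullref{Lemma}{lem:extantisymmetric} together with this elimination step; the finite case $n = 0$ is handled exactly as before, since a finite partial order is trivially a unary FA-foundational partial order with no seeds.
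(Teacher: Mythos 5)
Your proposal is correct and follows essentially the same route as the paper, which deduces the result immediately from \fullref{Theorem}{thm:qosetchar}, \fullref{Lemma}{lem:extantisymmetric}, and the observation that a (nontrivial) strongly connected component cannot occur in a partial order. Your version merely spells out these same steps in more detail.
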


\begin{proof}
This is immediate from \fullref{Theorem}{thm:qosetchar},
\fullref{Lemma}{lem:extantisymmetric}, and the observation that no
partial order contains a strongly connected component.
\end{proof}

We also have a decomposition result for unary FA-presentable partial
orders, analogous to \fullref{Corollary}{corol:qosetsareunions}:

\begin{corollary}
\label{corol:posetsareunions}
Every unary FA-presentable partial order decomposes as a finite
disjoint union of trivial partial orders, countably infinite ascending
chains, countably infinite descending chains, and countably infinite
anti-chains.
\end{corollary}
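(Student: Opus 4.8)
The plan is to derive this directly from the decomposition already obtained for quasi-orders, \fullref{Corollary}{corol:qosetsareunions}, rather than re-running the propagation analysis. Every partial order is in particular a quasi-order, being reflexive and transitive, so \fullref{Corollary}{corol:qosetsareunions} applies and shows that the given unary FA-presentable partial order $(\ext{Q},\leq)$ decomposes as a finite disjoint union whose summands are each one of five types: trivial quasi-orders, countably infinite ascending chains, countably infinite descending chains, countably infinite anti-chains, or countably infinite strongly connected components.

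It then remains to eliminate the fifth type. Each summand of a disjoint union is the induced substructure on that block, so it inherits anti-symmetry from $(\ext{Q},\leq)$. But a countably infinite strongly connected component contains two distinct elements $x$ and $y$ with both $x \leq y$ and $y \leq x$, contradicting anti-symmetry; hence no summand is of this type. Finally, a trivial quasi-order is exactly a trivial partial order, a one-element reflexive structure, so after this renaming the summands are precisely of the four types named in the statement.

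The argument is thus essentially bookkeeping on top of \fullref{Corollary}{corol:qosetsareunions}, and there is no substantive obstacle; the only points requiring a moment's care are that a summand of a disjoint union really is an induced substructure, so that anti-symmetry descends to it, and that ``trivial quasi-order'' and ``trivial partial order'' denote the same one-element structure. One could instead argue directly from \fullref{Theorem}{thm:posetchar}, observing that propagating a foundational partial order whose seeds are anti-chains, ascending chains, and descending chains yields, for each seed, a countably infinite anti-chain, ascending chain, or descending chain; but this would repeat work already carried out in the quasi-order case, so the deduction above is preferable.
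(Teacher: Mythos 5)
Your proposal is correct and matches the paper's route in substance: the paper obtains this corollary from Theorem \ref{thm:posetchar}, which is itself deduced from the quasi-order classification together with anti-symmetry ruling out strongly connected components, exactly the two ingredients you use. Passing through Corollary \ref{corol:qosetsareunions} instead of Theorem \ref{thm:posetchar} is only a reorganization of the same bookkeeping, and your two points of care (summands are induced substructures; trivial quasi-order equals trivial partial order) are both fine.
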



\subsection{Tournaments}
\label{subsec:tournaments}

Recall that $(X,\rightarrow)$ (where $\rightarrow$ is a binary
relation on $X$) is a \defterm{tournament} if (when viewed as a
directed graph) every pair of distinct vertices is connected by a
single directed edge, and there is no edge from a vertex to
itself. That is, for every $x,y \in X$ with $x \neq y$, either $x
\rightarrow y$ or $y \rightarrow x$ (but not both), and $x
\not\rightarrow x$ for every $x \in X$. Since this is an axiom over
two variables, the following characterization of unary FA-presentable
tournaments is an easy consequence of
\fullref{Lemma}{lem:isomorphicpairs}.

\begin{theorem}
\label{thm:tournamentchar}
A tournament is unary FA-pre\-sent\-a\-ble if and only if it can be
obtained by propagating a unary FA-foundational tournament.
\end{theorem}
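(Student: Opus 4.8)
The plan is to derive this from the general characterization of unary FA-presentable binary relations (\fullref{Theorem}{thm:binrelchar}) together with \fullref{Lemma}{lem:isomorphicpairs}, exactly in the way that \fullref{Lemma}{lem:isomorphicpairs} was used to prove the preservation lemmata \fullref{Lemmata}{lem:extreflexive}--\ref{lem:extantisymmetric}. The crucial observation is that the defining property of a tournament --- that for every pair of distinct vertices $x,y$ exactly one of $x \to y$, $y \to x$ holds, and no vertex has a loop --- is an axiom quantifying over at most two variables. Hence it is a property of all induced two-element (and one-element) substructures, and \fullref{Lemma}{lem:isomorphicpairs} transfers it between $(Q,\rr)$ and $(\ext{Q},\rre)$ in both directions.

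Concretely, first I would dispatch the easy direction: if $(\ext Q, \rre)$ is obtained by propagating a unary FA-foundational tournament $(Q,\rr)$, then by \fullref{Theorem}{thm:binrelchar} it is unary FA-presentable, and we need only check it really is a tournament. For that, take any $x, y \in \ext Q$; \fullref{Lemma}{lem:isomorphicpairs} gives $x', y' \in Q$ with $\{x,y\} \cong \{x',y'\}$ as induced substructures, and moreover $x = y$ iff $x' = y'$. If $x = y$, then $x' = y'$, and since $(Q,\rr)$ is loop-free we get $x' \not\rr x'$, hence $x \not\rre x$. If $x \neq y$, then $x' \neq y'$, so by the tournament property of $(Q,\rr)$ exactly one of $x' \rr y'$, $y' \rr x'$ holds; the isomorphism of induced substructures then gives that exactly one of $x \rre y$, $y \rre x$ holds. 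So $(\ext Q, \rre)$ is a tournament.

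For the converse, suppose $(\ext Q, \rre)$ is a unary FA-presentable tournament. If $\ext Q$ is finite it is itself a unary FA-foundational tournament with $n = 0$. Otherwise apply the second part of the proof of \fullref{Theorem}{thm:binrelchar} to obtain a unary FA-foundational binary relation $(Q, \rr)$ whose propagation is $(\ext Q, \rre)$, where $\rr$ is the restriction of $\rre$ to $Q \subseteq \ext Q$. Since the tournament property is inherited by induced substructures, $(Q, \rr)$ is automatically a tournament. Hence $(\ext Q, \rre)$ is obtained by propagating a unary FA-foundational tournament, as required.

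I do not anticipate a genuine obstacle here: the entire content is that ``tournament'' is a two-variable property, so both \fullref{Theorem}{thm:binrelchar} and \fullref{Lemma}{lem:isomorphicpairs} apply off the shelf, precisely as in the proofs of \fullref{Lemmata}{lem:extreflexive}--\ref{lem:extantisymmetric}. The only mild subtlety worth noting explicitly is the loop-free part of the axiom, which is a one-variable condition; this is handled by clause~(1) of \fullref{Lemma}{lem:isomorphicpairs} ($x = y \implies x' = y'$), ensuring that the diagonal behaviour of $\rr$ and $\rre$ agree, so no separate argument is needed.
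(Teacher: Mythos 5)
Your proposal is correct and follows exactly the route the paper intends: the paper states that, since the tournament axiom quantifies over only two variables, the theorem is an easy consequence of \fullref{Lemma}{lem:isomorphicpairs} combined with \fullref{Theorem}{thm:binrelchar}, and your write-up simply makes that argument explicit (including the correct observation that the loop-free condition is handled by clause~(1) of the lemma and that the converse direction only needs that the tournament property passes to induced substructures).
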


In order to give decomposition result in the spirit of
\fullref{Corollaries}{corol:qosetsareunions} and
\ref{corol:posetsareunions}, we need some terminology.

\begin{definition}
A countably infinite tournament $(X,\rightarrow)$, where $X = \{x_i : i \in \nset\}$, is said to be:
\begin{enumerate}
\item \defterm{complete ascending} if $x_i \rightarrow x_j$ for all $i < j$;
\item \defterm{complete descending} if $x_i \leftarrow x_j$ for all $i < j$;
\item \defterm{near-complete ascending} if $x_i \leftarrow x_{i+1}$
  for all $i$, and also $x_i \rightarrow x_j$ for all $i < j - 1$;
\item \defterm{near-complete descending} if $x_i \rightarrow x_{i+1}$
  for all $i$, and also $x_i \leftarrow x_j$ for all $i < j - 1$.
\end{enumerate}
\end{definition}

\begin{corollary}
\label{corol:tournamentsareunions}
Every unary FA-presentable tournament decomposes as a finite disjoint
union of trivial tournaments, countably infinite complete ascending
tournaments, countably infinite complete descending tournaments,
countably infinite near-ascending tournaments, and countably infinite
near-descending tournaments.
\end{corollary}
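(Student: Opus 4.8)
The plan is to read off the decomposition directly from \fullref{Theorem}{thm:tournamentchar} together with the structural analysis of seeds used in the proof of \fullref{Theorem}{thm:binrelchar}. Let $(\ext{Q},\rre)$ be a unary FA-pre\-sent\-a\-ble tournament. By \fullref{Theorem}{thm:tournamentchar} it is obtained by propagating some unary FA-foundational tournament $(Q,\rr)$ with seeds $P_0,\ldots,P_{n-1}$, and, as in the second part of the proof of \fullref{Theorem}{thm:binrelchar}, we may take $(Q,\rr)$ to be the restriction of $(\ext{Q},\rre)$ to $Q$, so that $(Q,\rr)$ is itself a tournament. Recall that $\ext{Q} = Q' \cup \bigcup_{k} \ext{P}_k$. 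I would take as the parts of the decomposition the singletons $\{q\}$ for $q \in Q'$ together with the sets $\ext{P}_0,\ldots,\ext{P}_{n-1}$: since $Q'$ is finite this is a decomposition of $\ext{Q}$ into finitely many parts, each $\{q\}$ carrying a trivial tournament, so it remains only to show that each $\ext{P}_k$, with the induced relation, is a countably infinite tournament of one of the four listed kinds.

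Fix $k$ and write $x_i = p^{(k)}_i$ for $i \in \nset$. Since $(\ext{Q},\rre)$ is loop-free there is no $\ext{S}_0$ connection from $\ext{P}_k$ to itself. For the adjacent pairs: in the finite tournament $(Q,\rr)$, for each $i \in \{1,2,3,4\}$ exactly one of $p^{(k)}_i \rr p^{(k)}_{i+1}$ and $p^{(k)}_{i+1} \rr p^{(k)}_i$ holds, and the four edges $p^{(k)}_i \rr p^{(k)}_{i+1}$ are precisely those of the $S_{+1}$ connection from $P_k$ to $P_k$ while the four reverse edges are precisely those of the $S_{-1}$ connection; since each connection is subject to the all-or-none requirement of \fullref{Subsection}{subsec:foundational}, exactly one of $S_{+1}$, $S_{-1}$ is present from $P_k$ to $P_k$, and hence by the propagation rules (\fullref{Figure}{fig:extensions}(b),(c)) exactly one of $\ext{S}_{+1}$, $\ext{S}_{-1}$ holds from $\ext{P}_k$ to itself. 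The same argument applied to the distance-two pairs $x_i,x_{i+2}$, using the all-or-none requirement on the $S_{+\infty}$ and $S_{-\infty}$ connections from $P_k$ to $P_k$ and \fullref{Figure}{fig:extensions}(d),(e), shows that exactly one of $\ext{S}_{+\infty}$, $\ext{S}_{-\infty}$ holds from $\ext{P}_k$ to itself. These two dichotomies leave exactly four cases, and unwinding the definitions of the $\ext{S}$-connections and comparing with the definitions of complete and near-complete ascending/descending tournaments: $\ext{S}_{+1}$ with $\ext{S}_{+\infty}$ gives a complete ascending tournament; $\ext{S}_{-1}$ with $\ext{S}_{-\infty}$ a complete descending one; $\ext{S}_{-1}$ with $\ext{S}_{+\infty}$ a near-complete ascending one; and $\ext{S}_{+1}$ with $\ext{S}_{-\infty}$ a near-complete descending one. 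This yields the stated decomposition.

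The only mildly delicate points are the two dichotomy arguments — using the all-or-none condition together with the tournament axiom to conclude that within $\ext{P}_k$ exactly one of each complementary pair of connections is present (the point being that if, say, $S_{+1}$ is absent then some reverse edge is present, and the all-or-none condition then forces the whole $S_{-1}$ connection) — and then correctly matching each of the four resulting combinations to the named tournament types. Both are routine once the definitions of \fullref{Subsections}{subsec:foundational} and~\ref{subsec:propagating} are unwound, and no pumping argument or automaton construction is needed beyond what \fullref{Theorem}{thm:tournamentchar} already supplies.
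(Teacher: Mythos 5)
Your proposal is correct and follows essentially the same route as the paper: decompose $\ext{Q}$ into $Q'$ and the seeds $\ext{P}_k$, rule out $\ext{S}_0$, use the tournament axiom to get exactly one of each complementary pair of connections from $\ext{P}_k$ to itself, and match the four resulting combinations to the four named tournament types (your matching agrees with the paper's). The only cosmetic difference is that you run the dichotomy argument in the finite foundational tournament and then propagate, whereas the paper argues directly in the propagated structure.
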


\begin{proof}
By \fullref{Theorem}{thm:tournamentchar}, any unary FA-presentable
tournament $(X,\rightarrow)$ is obtained by propagating a unary
FA-foundational tournament. Then $(X,\rightarrow)$ is the finite
disjoint union of the finite set $Q'$ and the various $\ext{P}_k$. For
any $k$, consider the connections that can run from $\ext{P}_k$ to
$\ext{P}_k$. Clearly the presence of an $\ext{S}_0$ connection is
incompatible with $(X,\rightarrow)$ being a tournament. Again from
$(X,\rightarrow)$ being a tournament, we see that there is either an
$\ext{S}_{-\infty}$ connection or an $\ext{S}_{+\infty}$ connection
from $\ext{P}_k$ to $\ext{P}_k$ (but not both). Similarly, there is
either an $\ext{S}_{-1}$ connection or an $\ext{S}_{+1}$ connection
from $\ext{P}_k$ to $\ext{P}_k$.

There are therefore four cases to consider, depending on which
connections $\ext{S}_{-\infty}$ or $\ext{S}_{+\infty}$ and
$\ext{S}_{-1}$ or $\ext{S}_{+1}$ are present:
\begin{itemize}

\item Suppose an $\ext{S}_{-\infty}$ connection and an
  $\ext{S}_{-1}$ connection from $\ext{P}_k$ to $\ext{P}_k$ are present. Then [the
  substructure induced by] $\ext{P}_k$ is a countably infinite
  complete descending tournament.

\item Suppose an $\ext{S}_{+\infty}$ connection and an
  $\ext{S}_{+1}$ connection from $\ext{P}_k$ to $\ext{P}_k$ are present. Then
  $\ext{P}_k$ is a countably infinite complete ascending tournament.

\item Suppose an $\ext{S}_{-\infty}$ connection and an
  $\ext{S}_{+1}$ connection from $\ext{P}_k$ to $\ext{P}_k$ are present. Then
  $\ext{P}_k$ is a countably infinite near-complete descending
  tournament.

\item Suppose an $\ext{S}_{+\infty}$ connection and an
  $\ext{S}_{-1}$ connection from $\ext{P}_k$ to $\ext{P}_k$ are present. Then
  $\ext{P}_k$ is a countably infinite near-complete ascending
  tournament.

\end{itemize}
This completes the proof.
\end{proof}

\section{Trees \& Forests}
\label{sec:trees}

This section is devoted to characterizing unary FA-pre\-sent\-a\-ble
directed and undirected trees and forests. For our purposes, a
directed tree is simply a directed graph that can be obtained by
taking a tree and assigning a direction to each edge.

The characterization results describe unary FA-pre\-sent\-a\-ble trees
as those that can be obtained, via a construction we call
\defterm{attachment}, from finite trees and from two species of
infinite trees that we will define shortly: shallow stars and periodic
paths.

\begin{definition}
\label{def:attachment}
Let $(G,\gamma)$ and $(T,\eta)$ be directed graphs, and let $g \in G$
and $t \in T$ be distinguished vertices. The result of
\defterm{attaching} $(T,\eta)$ at $t$ to the vertex $g$ of
$(G,\gamma)$ is the graph obtained by taking the disjoint union of the
graphs $(G,\gamma)$ and $(T,\eta)$ and identifying the vertices $g$
and $t$.
\end{definition}

We now introduce the two species of infinite trees used in the
characterization results.

\begin{definition}
\label{def:shallowstarperiodicpath}
First we define a \defterm{template}, which comprises a quadruple
$(T,\eta,t_0,t_1)$, where $(T,\eta)$ is a finite directed tree with
vertex set $T$, edge set $\eta$, and $t_0,t_1 \in T$ are distinguished
vertices with $t_0$ being a leaf vertex.

Let $(T,\eta,t_0,t_1)$ be a template. Consider the graph
$\mathcal{S}(T,\eta,t_0,t_1)$ obtained by taking the disjoint union of
countably many copies $(T^{(j)},\eta^{(j)},t_0^{(j)},t_1^{(j)})$ and
amalgamating vertices related by the equivalence relation generated by
\begin{equation}
\label{eq:shallowstarperiodicpathrel}
\mu = \{(t_1^{(j)}, t_0^{(j+1)}) : j \in \nset^0\}.
\end{equation}
In the case where $t_0$ and $t_1$ are the same (leaf) vertex, the
graph $\mathcal{S}(T,\eta,t_0,t_1)$ is the tree obtained by
amalgamating all the leaf vertices $t_0^{(j)} = t_1^{(j)}$ into a
single vertex. In this case, we call the resulting graph a
\defterm{shallow star}, the amalgamated vertex is called the
\defterm{centre} of $\mathcal{S}(T,\eta,t_0,t_1)$, and an edge
incident on the centre is called a \defterm{ray} of
$\mathcal{S}(T,\eta,t_0,t_1)$. Notice that the centre is the unique
vertex of infinite degree in $\mathcal{S}(T,\eta,t_0,t_1)$. Notice
that either all the rays start at the centre, in which case
$\mathcal{S}(T,\eta,t)$ is said to be \defterm{outward}, or all the
rays end at the centre, in which case $\mathcal{S}(T,\eta,t)$ is said
to be \defterm{inward}. (See the example in
\fullref{Figure}{fig:shallowstars}.)

\begin{figure}[tb]
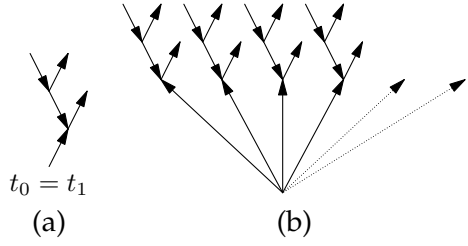

\centerline{%
\begin{tabular}{cc}
\includegraphics{\jobname-tree-shallowstartemplate.eps} & \includegraphics{\jobname-tree-shallowstar.eps} \\
(a) & (b) 
\end{tabular}%
}
\caption{Examples of a template (a) and the corresponding shallow star
  (b).}
\label{fig:shallowstars}
\end{figure}

In the case where $t_0 \neq t_1$, the graph
$\mathcal{S}(T,\eta,t_0,t_1)$ is the tree obtained by amalgamating the
vertex $t_0^{(j+1)}$ in the $(j+1)$-th copy of the template with
$t_1^{(j)}$ in the $j$-th copy. In this case, we call the resulting
graph a \defterm{periodic path}. Notice that there is a unique simple
path $\beta$ in $(T,\eta,t_0,t_1)$ from $t_0$ to $t_1$. Let
$\beta^{(j)}$ be the corresponding path in
$(T^{(j}),\eta^{(j)},t_0^{(j)},t_1^{(j)})$. Then in
$\mathcal{S}(T,\eta,t_0,t_1)$ the concatenation of the paths
$\beta^{(j)}$ form a infinite simple path. This path is called the
\defterm{spine} of $\mathcal{S}(T,\eta,t_0,t_1)$. Notice that the
spine is infinite in only one direction; in the other direction it
begins at $t_0^{(0)}$, which is called the \defterm{base} of
$\mathcal{S}(T,\eta,t_0,t_1)$. (See the example in
\fullref{Figure}{fig:periodicpath}.)

\begin{figure}[tb]
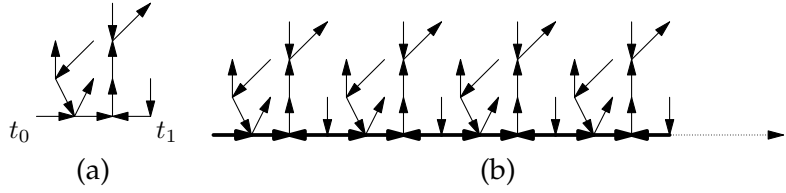

\centerline{%
\begin{tabular}{cc}
\includegraphics{\jobname-tree-periodicpathtemplate.eps} & \includegraphics{\jobname-tree-periodicpath.eps} \\
(a) & (b) 
\end{tabular}%
}
\caption{An example of a template (a) and the corresponding periodic
  path (b), with the spine of the periodic path shown in bold.}
\label{fig:periodicpath}
\end{figure}
\end{definition}

\begin{lemma}
\label{lem:amalgamatingfa}
Let $(G,\gamma)$ be graph admitting an FA-presentation (respectively,
unary FA-presentation) $(L,\phi)$, and let $Q$ be an equivalence
relation on $G$ such that $\Lambda(Q,\phi)$ is regular. Then the graph
$(G_Q,\gamma_Q)$ formed by amalgamating all vertices related by $Q$ is
also FA-presentable (respectively, unary FA-presentable).
\end{lemma}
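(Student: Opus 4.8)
The plan is to realize the amalgamated graph $(G_Q, \gamma_Q)$ as an FA-presentable structure by choosing, within each $Q$-class, a canonical representative and restricting the language accordingly. First I would use the hypothesis that $\Lambda(Q,\phi)$ is regular together with $\Lambda(=,\phi)$ being regular to observe that the relation ``$w$ is the $\le$-least word in its $Q$-class'' is regular: it is the set of $w \in L$ such that for all $w' \in L$ with $(w,w') \in \Lambda(Q,\phi)$ we have $w' \not< w$, and word order (shortlex, or in the unary case simply length order) is itself regular, so this is a first-order-definable condition over the regular relations at hand and hence a regular sublanguage $L' \subseteq L$. The map $\phi' = \phi|_{L'}$, composed with the quotient map $G \to G_Q$, then surjects onto $G_Q$, and by construction each vertex of $G_Q$ has exactly one representative in $L'$ (so $(L',\phi')$ will even be injective).

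Next I would check that the edge relation $\gamma_Q$ pulls back to a regular relation on $L'$. By definition, in $G_Q$ there is an edge from the class of $g$ to the class of $h$ precisely when there exist $g' \in [g]_Q$ and $h' \in [h]_Q$ with $(g',h') \in \gamma$. Translating through $\phi$: for $w_1, w_2 \in L'$, the pair $(w_1\phi', w_2\phi')$ lies in $\gamma_Q$ iff there exist $v_1, v_2 \in L$ with $(w_1, v_1) \in \Lambda(Q,\phi)$, $(w_2, v_2) \in \Lambda(Q,\phi)$, and $(v_1, v_2) \in \Lambda(\gamma,\phi)$. This is an existential first-order formula built from the regular relations $\Lambda(Q,\phi)$ and $\Lambda(\gamma,\phi)$ (and $L$, $L'$), so by \fullref{Proposition}{prop:firstorderreg} — applied to the structure $\mathcal{S}$ obtained from $(G,\gamma)$ by adjoining the relation $Q$, which is FA-presentable via $(L,\phi)$ since $\Lambda(Q,\phi)$ is regular — the resulting relation $\Lambda(\gamma_Q, \phi')$ is regular. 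The equality relation on $L'$ is regular since $L'$ is a regular language and representatives are unique. Hence $(L', \phi')$ is an FA-presentation of $(G_Q, \gamma_Q)$.

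Finally, the unary case requires only that one check the argument goes through when $L \subseteq a^*$: all the relations involved ($\Lambda(Q,\phi)$, $\Lambda(\gamma,\phi)$, length order, $L$) are then relations over a one-letter alphabet, the sublanguage $L' \subseteq a^*$ of canonical representatives is regular, and \fullref{Proposition}{prop:firstorderreg} still delivers regularity of $\Lambda(\gamma_Q,\phi')$; thus $(L',\phi')$ is a unary FA-presentation of $(G_Q,\gamma_Q)$. I expect no serious obstacle here: the only point needing a little care is the verification that ``least word in its $Q$-class'' really is expressible by a first-order formula over the available regular relations (so that one may invoke \fullref{Proposition}{prop:firstorderreg} rather than arguing directly with automata), and that the order used — shortlex in general, plain length order in the unary case — is indeed a regular relation so that this formula has regular solution set. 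Both are standard, so the main content of the lemma is just the packaging described above.
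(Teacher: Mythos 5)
Your proof is correct, and it takes a somewhat different route from the paper's. The paper's proof is a two-line reduction: it defines the saturated relation $\gamma' = Q \circ \gamma \circ Q$, notes that $\Lambda(\gamma',\phi)$ is regular (this is exactly the existential formula you write down for the pulled-back edge relation), observes that $Q$ is then a congruence on $(G,\gamma')$ with quotient $(G_Q,\gamma_Q)$, and cites the known fact that factoring an FA-presentable structure by a congruence with regular $\Lambda$-relation preserves FA-presentability (Rubin's survey, Corollary~3.7(iii)). What you have done is essentially unwind that citation: choosing the shortlex-least (or, in the unary case, shortest) word in each $\Lambda(Q,\phi)$-class is precisely the standard construction behind the quotient result, and it yields the additional information that the resulting presentation of $(G_Q,\gamma_Q)$ is injective. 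So your argument is more self-contained, at the cost of redoing a standard lemma; the paper's is shorter by outsourcing it. One small point of hygiene: \fullref{Proposition}{prop:firstorderreg} as stated concerns formulas over the \emph{structure's} relations, whereas your definition of $L'$ also quantifies against the shortlex order on words, which is a relation on $L$ rather than one induced from $G$ (indeed, if $\phi$ is not injective it does not even descend to $G$). What you actually need is the closure of regular (synchronized) relations under Boolean operations and projection, which is the engine behind that proposition and is standard; it would be cleaner to invoke that closure directly for the definition of $L'$, reserving the proposition for the edge relation as you do in your second paragraph.
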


\begin{proof}
Let $\gamma' = Q \circ \gamma \circ Q$; then $\Lambda(\gamma',\phi)$
is regular and so $(G,\gamma')$ is FA-presentable (respectively, unary
FA-presentable). It is easy to see that $Q$ is a congruence on
$(G,\gamma')$ and that $(G_Q,\gamma_Q)$ is obtained by factoring
$(G,\gamma')$ by $Q$. Hence $(G_Q,\gamma_Q)$ is FA-presentable
(respectively, unary FA-presentable) \cite[Corollary~3.7(iii)]{rubin_survey}.
\end{proof}

\begin{lemma}
\label{lem:attachmentunaryfa}
Let $(G,\gamma)$ and $(T,\eta)$ be unary FA-presentable directed
graphs, and let $g \in G$ and $t \in T$. The graph obtained by
attaching $(T,\eta)$ at $t$ to $g$ is also unary FA-presentable.
\end{lemma}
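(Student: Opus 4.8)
The plan is to combine the disjoint-union lemma (\fullref{Lemma}{lem:union}) with the amalgamation lemma (\fullref{Lemma}{lem:amalgamatingfa}), together with the transitive-closure results of Section~3 to supply the needed regular equivalence relation. First I would form the disjoint union $(H,\chi)$ of $(G,\gamma)$ and $(T,\eta)$; by \fullref{Lemma}{lem:union} this is unary FA-presentable, say via a unary FA-presentation $(L,\psi)$. The graph obtained by attaching $(T,\eta)$ at $t$ to $g$ is precisely the quotient $(H_Q,\chi_Q)$ of $(H,\chi)$ by the equivalence relation $Q$ whose only non-trivial class is $\{g,t\}$ (here $g$ and $t$ are regarded as the distinct elements of $H$ coming from the two summands). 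So by \fullref{Lemma}{lem:amalgamatingfa} it suffices to check that $\Lambda(Q,\psi)$ is regular.

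The second step is this regularity check. Since $Q$ differs from the equality relation only on the single pair $\{g,t\}$, we have
\[
\Lambda(Q,\psi) = \Lambda(=,\psi) \cup \{(g\psi^{-1}, t\psi^{-1}), (t\psi^{-1}, g\psi^{-1})\},
\]
which is a union of the regular relation $\Lambda(=,\psi)$ with a two-element (hence finite, hence regular) relation, and is therefore regular. (Strictly speaking one should note that $g$ and $t$ are individual elements, so their preimages under the injective part of the presentation are single words, or at worst a regular set of words if the presentation is not injective; in either case the added relation is regular.) Applying \fullref{Lemma}{lem:amalgamatingfa} then gives that $(H_Q,\chi_Q)$, i.e.\ the attachment, is unary FA-presentable, completing the proof.

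The only real subtlety — and the step I would treat most carefully — is confirming that the quotient $(H_Q,\chi_Q)$ is genuinely the attachment graph as defined in \fullref{Definition}{def:attachment}: one must check that identifying $g$ with $t$ in the disjoint union of $(G,\gamma)$ and $(T,\eta)$ yields exactly the graph described there, with no spurious merging or splitting of edges. This is essentially immediate from the definitions (the edge set of the disjoint union is just the union of the two edge sets, and collapsing the class $\{g,t\}$ reroutes edges incident on $g$ or $t$ to the amalgamated vertex), but it is the point where the abstract quotient construction and the concrete combinatorial operation have to be matched up. Everything else is a routine invocation of the preceding lemmata; in particular, nothing beyond finiteness of the added pairs is needed, so the argument works verbatim in the non-unary FA-presentable case as well, which matches the parenthetical claim that could be appended.
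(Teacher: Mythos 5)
Your proposal is correct, and its skeleton -- disjoint union via \fullref{Lemma}{lem:union}, then quotient by the equivalence relation $Q$ generated by $\{(g,t)\}$ via \fullref{Lemma}{lem:amalgamatingfa} -- is exactly the paper's. The one place you diverge is the justification that $\Lambda(Q,\psi)$ is regular: the paper notes that the generating relation $R=\{(g,t)\}$ is finite, hence $\Lambda(R,\phi)$ is regular, and then invokes \fullref{Corollary}{corol:equivrelgen} (which rests on the transitive-closure machinery of \fullref{Theorem}{thm:reftransclosure}), whereas you observe directly that the equivalence relation generated by a single pair is just equality together with the two pairs $(g,t)$ and $(t,g)$, so $\Lambda(Q,\psi)$ is the union of the regular relation $\Lambda(=,\psi)$ with a regular (indeed essentially finite) relation. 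Your version is more elementary and, as you note, works verbatim for arbitrary (non-unary) FA-presentations; the paper's version is a one-line citation of a corollary it has already paid for, and has the advantage of generalizing without change to the case where one wants to amalgamate along an infinite regular relation (as the paper in fact does in \fullref{Lemma}{lem:shallowstarperiodicpathunaryfa}, where the generated equivalence relation is genuinely infinite and your shortcut would not apply). Your parenthetical care about non-injective presentations and about matching the abstract quotient to \fullref{Definition}{def:attachment} is sound but could be dispatched by citing \fullref{Theorem}{thm:allwords} to assume injectivity outright.
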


\begin{proof}
The disjoint union $(S,\sigma)$ of $(G,\gamma)$ and $(T,\eta)$ admits
is unary FA-pre\-sent\-a\-ble by \fullref{Lemma}{lem:union}. Let $R$
be the relation $\{(g,t)\}$ and let $Q$ be the equivalence relation it
generates. Since $R$ is finite, $\Lambda(R,\phi)$ is regular. So, by
\fullref{Corollary}{corol:equivrelgen}, $\Lambda(Q,\phi)$ is
regular. The graph obtained attaching $(T,\eta)$ at $t$ to $g$ is then
unary FA-presentable by \fullref{Lemma}{lem:amalgamatingfa}.
\end{proof}

\begin{lemma}
\label{lem:shallowstarperiodicpathunaryfa}
Shallow stars and periodic paths are is unary FA-pre\-sent\-a\-ble.
\end{lemma}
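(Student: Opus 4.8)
The plan is to realize both shallow stars and periodic paths as the result of applying two constructions to a finite directed graph, each of which is covered by an existing preservation result. By \fullref{Definition}{def:shallowstarperiodicpath}, the graph $\mathcal{S}(T,\eta,t_0,t_1)$ associated to a template $(T,\eta,t_0,t_1)$ is obtained from the disjoint union of countably many copies $(T^{(j)},\eta^{(j)})$ of the finite directed graph $(T,\eta)$ by amalgamating all vertices related by the equivalence relation $Q$ generated by $\mu = \{(t_1^{(j)},t_0^{(j+1)}) : j \in \nset^0\}$. The disjoint union is unary FA-pre\-sent\-a\-ble by \fullref{Lemma}{lem:countableunion}, and amalgamation along a regular equivalence relation preserves unary FA-presentability by \fullref{Lemma}{lem:amalgamatingfa}, \emph{provided} one can exhibit a unary FA-presentation $(a^*,\phi)$ of the disjoint union for which $\Lambda(Q,\phi)$ is regular. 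So the whole argument reduces to producing such a $\phi$.

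For this I would use the concrete presentation $(a^*,\phi)$ built in the proof of \fullref{Lemma}{lem:countableunion}: writing $n = |T|$ and fixing a bijection $\psi$ from $\{a^0,\ldots,a^{n-1}\}$ onto $T$, the word $a^{nj+i}$ represents the copy of $a^i\psi$ inside $T^{(j)}$; this presentation is injective and has language $a^*$, as \fullref{Corollary}{corol:equivrelgen} requires. If $t_0 = a^{i_0}\psi$ and $t_1 = a^{i_1}\psi$ with $i_0,i_1 \in \{0,\ldots,n-1\}$, then $\mu$ pairs $a^{nj+i_1}$ with $a^{n(j+1)+i_0}$; since $n + i_0 - i_1 \geq 1$, the second word is always the longer, and hence
\[
\conv\Lambda(\mu,\phi) = (a,a)^{i_1}\bigl((a,a)^{n}\bigr)^{*}(\$,a)^{n+i_0-i_1},
\]
which is regular. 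Augmenting the disjoint union by this regular relation $\mu$ and applying \fullref{Corollary}{corol:equivrelgen} gives that $\Lambda(Q,\phi)$ is regular, whereupon \fullref{Lemma}{lem:amalgamatingfa} yields that $\mathcal{S}(T,\eta,t_0,t_1)$ is unary FA-pre\-sent\-a\-ble. The argument is uniform in the two cases: $t_0 = t_1$ (so $i_0 = i_1$) gives shallow stars, and $t_0 \neq t_1$ gives periodic paths, so both species are handled at once.

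The one step that requires genuine care — and hence the main, though modest, obstacle — is the regularity of $\Lambda(\mu,\phi)$: one must commit to the specific presentation of \fullref{Lemma}{lem:countableunion} rather than an abstract presentation supplied by \fullref{Theorem}{thm:allwords}, so as to have explicit words for the vertices $t_0^{(j)}$ and $t_1^{(j)}$, and then observe that $\mu$ links words whose lengths differ by the fixed positive amount $n + i_0 - i_1$, which is exactly what makes the encoded language regular. Once this is established, the remaining steps are direct invocations of \fullref{Corollary}{corol:equivrelgen} and \fullref{Lemma}{lem:amalgamatingfa}.
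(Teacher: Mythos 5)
Your proposal is correct and follows essentially the same route as the paper: take the explicit presentation of the countable disjoint union from \fullref{Lemma}{lem:countableunion}, observe that $\Lambda(\mu,\phi)$ is regular because $\mu$ links words whose lengths differ by a fixed positive constant, and then apply \fullref{Corollary}{corol:equivrelgen} and \fullref{Lemma}{lem:amalgamatingfa}. Your explicit computation of $\conv\Lambda(\mu,\phi)$ is a slightly more detailed justification of the regularity step than the paper gives, but the argument is the same.
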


\begin{proof}
Let $(T,\eta,t_0,t_1)$ be a template. Follow the proof of
\fullref{Lemma}{lem:countableunion} to obtain a unary FA-presentation
$(a^*,\phi)$ for the disjoint union of countably many copies of
$(T,\eta,t_0,t_1)$. Let $p$ and $q$ be such that $a^p\phi$ is the
first copy of the vertex $t_0$ and $a^q\phi$ the first copy of the
vertex $t_1$. Then, by the proof of
\fullref{Lemma}{lem:countableunion},
\[
\Lambda(\mu,\phi) = \{(a^{q+kn},a^{p+(k+1)n}) : k \in \nset\}
\]
(where $\mu$ is as defined in \eqref{eq:shallowstarperiodicpathrel})
and so is regular. Let $Q$ be the equivalence relation generated by
$\mu$; then $\Lambda(Q,\phi)$ is regular by
\fullref{Corollary}{corol:equivrelgen}. Hence
$\mathcal{S}(T,\eta,t_0,t_1)$ is unary FA-presentable by
\fullref{Lemma}{lem:amalgamatingfa}.
\end{proof}

Before stating and proving the characterization theorem for unary
FA-pre\-sent\-a\-ble directed trees, we need the following technical
lemmata:

\begin{lemma}
\label{lem:restrictlongarrows}
Let $(a^*,\phi)$ be an injective unary FA-pre\-sent\-a\-tion for a directed
tree. Then in the diagram, there cannot be a long arrow between two
points in $C[1,\infty)$. Equivalently, any long arrow must either
  start or end in $C[0]$.
\end{lemma}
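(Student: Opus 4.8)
The plan is to argue by contradiction and produce a cycle of length $4$ in the underlying undirected graph, which is impossible since the underlying graph of a directed tree is a tree. So suppose there is a long arrow between two points $u$ and $v$, with neither $u$ nor $v$ in $C[0]$. Without loss of generality $c(u) < c(v)$, and since the arrow is long, $c(v) \geq c(u) + 2$. Working throughout with undirected edges (i.e.\ writing ``there is an edge between $x$ and $y$'' for ``$(x,y) \in R$ or $(y,x) \in R$'') will let us ignore the orientation of the arrow.

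The key step is to generate enough edges by shifting. First, $v$ is the right-hand endpoint of the arrow and $c(v) > c(u)+1$, so applying case~(2) or case~(3) of \fullref{Arrow rule}{arrowrule:up} (whichever matches the orientation of the pair in $R$) repeatedly gives, for every $j \in \nset$, an edge between $u$ and $v\tau_j$. Next, since neither $u$ nor $v$ lies in $C[0]$, case~(1) of \fullref{Arrow rule}{arrowrule:up} applied once yields an edge (arrow) between $u\tau_1$ and $v\tau_1$; this is again a long arrow (its endpoints are in columns $c(u)+1$ and $c(v)+1 \geq c(u)+3$) whose right-hand endpoint is $v\tau_1$, so the same shifting argument gives an edge between $u\tau_1$ and $v\tau_j$ for every $j \geq 1$.

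Now close the loop. The four points $u$, $v\tau_1$, $u\tau_1$, $v\tau_2$ lie in the four columns $c(u)$, $c(v)+1$, $c(u)+1$, $c(v)+2$, which are pairwise distinct because $c(v) \geq c(u)+2$; hence these are four distinct vertices. By the previous paragraph there are edges between $u$ and $v\tau_1$, between $v\tau_1$ and $u\tau_1$, between $u\tau_1$ and $v\tau_2$, and between $v\tau_2$ and $u$; these four edges are pairwise distinct and form a cycle of length $4$. This contradicts the fact that the underlying undirected graph of a directed tree is a tree. Hence no long arrow has both endpoints in $C[1,\infty)$, and the ``equivalently'' clause is just the contrapositive.

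I do not expect a real obstacle here: the only points needing care are (i) checking that the pair produced after shifting both endpoints is still a long arrow, so that \fullref{Arrow rule}{arrowrule:up} can be re-applied to shift its right-hand endpoint, and (ii) verifying that the four vertices in the cycle are genuinely distinct; both reduce to trivial bookkeeping with column indices, and the orientation of the original arrow causes no trouble once one phrases everything in terms of undirected edges.
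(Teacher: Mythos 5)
Your proof is correct and is essentially the paper's argument: both produce a $4$-cycle in the underlying undirected graph by shifting the right-hand endpoint of the long arrow and shifting the whole arrow via \fullref{Arrow rule}{arrowrule:up}, then contradict the tree property. The only difference is cosmetic (you use shifts by $1$ and $2$ where the paper uses $2$ and $4$), and your bookkeeping on column indices and distinctness of the four vertices checks out.
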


\begin{proof}

\begin{figure}[tb]
\centerline{\includegraphics{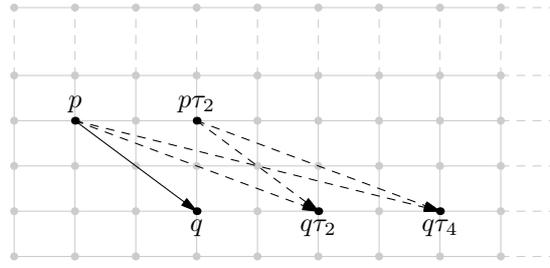}}
\caption{In a unary FA-pre\-sent\-a\-tion for a directed tree, there can be
  no arrow from $p$ to $q$ with $c(q) - c(p) > 1$ and $c(p) \geq
  1$. If such an arrow (solid in the diagram) exists, \fullref{Arrow
    rule}{arrowrule:up} implies the existence of four other arrows
  (dashed in the diagram) that mean the graph cannot be a tree.}
\label{fig:treelongarrow}
\end{figure}

Suppose that $(a^*,\phi)$ is an injective unary FA-pre\-sent\-a\-tion for a
directed tree $T$ and that there is a long arrow from $p$ to $q$ with
$p,q \in C[1,\infty)$. Conside the case when $c(q) > c(p)$; the
  other case is similar. Since $c(q)-c(p) > 1$, \fullref{Arrow
    rule}{arrowrule:up} shows that there are arrows from $p$ to
  $(q)\tau_2$, from $p$ to $(q)\tau_4$, from $(p)\tau_2$ to
  $(q)\tau_4$, and from $(p)\tau_2$ to $(q)\tau_2$, as illustrated in
  \fullref{Figure}{fig:treelongarrow}. Thus in the graph there
  is an undirected cycle $(p)\phi \to ((q)\tau_2)\phi \leftarrow
  ((p)\tau_2)\phi \to ((q)\tau_4)\phi \leftarrow (p)\phi$, which
  contradicts $T$ being a tree.
\end{proof}

\begin{lemma}
\label{lem:shortarrowdegrees}
Let $(a^*,\phi)$ be an injective unary FA-pre\-sent\-a\-tion for a directed
tree. Let $p \in C[1,\infty)$. Then $p\phi$ has degree at most $4D$.
\end{lemma}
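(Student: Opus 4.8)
The plan is to bound the degree of $p\phi$ by a direct counting argument, using \fullref{Lemma}{lem:restrictlongarrows} as the key input. Write $R$ for the edge relation of the directed tree. The degree of $p\phi$ is the number of vertices joined to $p\phi$ by an edge; since the underlying undirected graph of a tree is simple (no parallel edges, no $2$-cycles), this number equals the number of arrows of $\Lambda(R,\phi)$ incident on $p$, counted without regard to orientation. The strategy is to split these incident arrows into short arrows and long arrows and to bound each class separately by a pigeonhole count over columns of the diagram.

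For the short arrows: by definition, a short arrow incident on $p$ has its other endpoint $q$ with $|c(p) - c(q)| \leq 1$, so $q$ lies in one of the three columns $C[c(p)-1]$, $C[c(p)]$, $C[c(p)+1]$ (all genuine columns, since $c(p) \geq 1$). Each column of the diagram contains exactly $D$ points, so there are at most $3D$ short arrows incident on $p$.

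For the long arrows: this is where the hypothesis $p \in C[1,\infty)$ is used. By \fullref{Lemma}{lem:restrictlongarrows}, every long arrow must start or end in $C[0]$; since $p$ is not in $C[0]$, the other endpoint of any long arrow incident on $p$ must lie in $C[0]$. As $C[0]$ contains exactly $D$ points, there are at most $D$ long arrows incident on $p$. Adding the two bounds gives that $p\phi$ has degree at most $3D + D = 4D$, as required.

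There is no real obstacle here: once \fullref{Lemma}{lem:restrictlongarrows} is in hand, the argument is a pigeonhole count. The only points needing a word of care are the observation that it suffices to count incident arrows ignoring direction (legitimate for a tree), and the harmless overlap when $c(p) = 1$ between the column $C[c(p)-1] = C[0]$ used in the short-arrow count and the column $C[0]$ used in the long-arrow count — this overlap only lowers the true degree and so cannot violate the upper bound.
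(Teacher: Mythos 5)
Your proof is correct and is essentially the paper's argument in contrapositive form: the paper supposes the degree exceeds $4D$, observes that $C[0]\cup C[c(p)-1]\cup C[c(p)]\cup C[c(p)+1]$ contains only $4D$ points, and derives a long arrow avoiding $C[0]$, contradicting Lemma~\ref{lem:restrictlongarrows}, which is exactly your count of at most $3D$ short arrows plus $D$ long arrows. Your explicit remarks about counting undirected incidences and about the overlap when $c(p)=1$ are sound and slightly more careful than the paper's phrasing, but do not change the argument.
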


\begin{proof}
Suppose $p\phi$ has degree greater than $4D$ and let $k = c(p)$. There
are exactly $4D$ points in $C[0] \cup C[k-1] \cup C[k] \cup C[k+1]$,
so there is some point $q$ in $C[1,k-2]\cup C[k+2,\infty)$
  such that there is an arrow between $p$ and $q$ (in some
  direction). This is a long arrow since $|c(p) - c(q)| \geq 2$, and
  this contradicts \fullref{Lemma}{lem:restrictlongarrows}.
\end{proof}

\begin{lemma}
\label{lem:longarrowdegrees}
Let $(a^*,\phi)$ be an injective unary FA-pre\-sent\-a\-tion for a directed
tree. The following are equivalent:
\begin{enumerate}
\item The diagram contains a long arrow.
\item Some vertex of the graph has infinite degree.
\item Some vertex of the graph has degree greater than $3D$.
\end{enumerate}
\end{lemma}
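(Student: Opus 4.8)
The plan is to prove the three implications $(2)\Rightarrow(1)$, $(1)\Rightarrow(3)$, $(3)\Rightarrow(2)$ in a cycle. Two of these are already essentially in hand from the preceding lemmata. For $(1)\Rightarrow(3)$: suppose the diagram contains a long arrow, say from $p$ to $q$ with $c(q)-c(p)>1$ (the other orientation is symmetric). By \fullref{Lemma}{lem:restrictlongarrows} this long arrow must start or end in $C[0]$; without loss of generality say $c(p)=0$. Then \fullref{Arrow rule}{arrowrule:up}(iii) gives, for every $k\in\nset$, an arrow from $p$ to $q\tau_k$, and all the points $q\tau_k$ are distinct (they lie in distinct columns). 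Hence $p\phi$ has infinite degree, so certainly degree greater than $3D$; this proves $(1)\Rightarrow(3)$, and in fact proves $(1)\Rightarrow(2)$ directly, which I will also record. The implication $(3)\Rightarrow(2)$ is the contrapositive of a bound: I claim that if the diagram contains \emph{no} long arrow then every vertex has degree at most $3D$. Indeed, if there is no long arrow then by definition every arrow incident on a given point $x$ goes to a point in $C[x-1]\cup C[x]\cup C[x+1]$ (interpreting $C[-1]$ as empty), and these three columns contain at most $3D$ points, so $x\phi$ has degree at most $3D$. Contrapositively, degree greater than $3D$ forces a long arrow, hence (by $(1)\Rightarrow(2)$) some vertex of infinite degree. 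This is slightly circular as stated, so I will instead argue $(3)\Rightarrow(1)$ directly (degree $>3D$ at $x$ forces, by the pigeonhole count on $C[x-1]\cup C[x]\cup C[x+1]$, an arrow to a point at least two columns away, i.e.\ a long arrow) and then $(1)\Rightarrow(2)$ as above, and $(2)\Rightarrow(3)$ is trivial since infinite degree exceeds $3D$.

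So the cleanest route is the cycle $(2)\Rightarrow(3)\Rightarrow(1)\Rightarrow(2)$. The step $(2)\Rightarrow(3)$ is immediate: a vertex of infinite degree has degree greater than $3D$. The step $(3)\Rightarrow(1)$: let $x\phi$ have degree greater than $3D$ and put $k=c(x)$. The points at column-distance at most $1$ from $x$ are exactly those in $C[k-1]\cup C[k]\cup C[k+1]$ (omit $C[k-1]$ if $k=0$), and there are at most $3D$ of them, so at least one of the $>3D$ neighbours of $x\phi$ is $y\phi$ with $|c(x)-c(y)|\ge 2$; the arrow between $x$ and $y$ is then long. The step $(1)\Rightarrow(2)$ uses \fullref{Lemma}{lem:restrictlongarrows} to place one endpoint of the long arrow in $C[0]$ and then \fullref{Arrow rule}{arrowrule:up} to shift the far endpoint rightwards indefinitely, producing infinitely many distinct neighbours of the $C[0]$-endpoint, hence a vertex of infinite degree.

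The main obstacle — and it is a mild one — is being careful that when \fullref{Lemma}{lem:restrictlongarrows} forces a long arrow to have an endpoint in $C[0]$, it is the \emph{near} endpoint we can pin there, and that the repeated application of \fullref{Arrow rule}{arrowrule:up} really does produce pairwise distinct neighbours rather than revisiting vertices. Since the presentation $(a^*,\phi)$ is injective and the shifts $\tau_k$ land in strictly increasing columns, distinctness is automatic, so no real difficulty arises. One should also note the degenerate boundary case $k=0$ in the $(3)\Rightarrow(1)$ count, where only $2D$ points lie within column-distance $1$, which only strengthens the conclusion. No further lemmata are needed beyond \fullref{Lemma}{lem:restrictlongarrows} and \fullref{Arrow rule}{arrowrule:up}.
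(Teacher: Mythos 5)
Your proof is correct and follows essentially the same route as the paper: the same three implications, namely $(1)\Rightarrow(2)$ by shifting the far end of the long arrow rightwards via \fullref{Arrow rule}{arrowrule:up}, $(2)\Rightarrow(3)$ trivially, and $(3)\Rightarrow(1)$ by the pigeonhole count on the at most $3D$ points of $C[k-1]\cup C[k]\cup C[k+1]$. The only (harmless) divergence is your appeal to \fullref{Lemma}{lem:restrictlongarrows} in the step $(1)\Rightarrow(2)$, which is unnecessary: cases 2 and 3 of \fullref{Arrow rule}{arrowrule:up} apply to any long arrow regardless of whether an endpoint lies in $C[0]$, and the paper dispenses with that lemma here.
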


\begin{proof}
(1 $\implies$ 2) Suppose there is a long arrow between $p$ and $q$ in
  some direction. Assume without loss of generality that $c(q) \geq
  c(p)+2$. Then by \fullref{Arrow rule}{arrowrule:up}, there are
  arrows between $p$ and $q\tau_n$ for all $n$. Hence $p\phi$ has infinite degree.

(2 $\implies$ 3) This is trivial.

(3 $\implies$ 1) Suppose $p\phi$ has degree greater than $3D$. Let $k
  = c(p)$. Then there is some arrow between $p$ and a vertex $q$
  outside $C[k-1,k+1]$ (since this set contains $3D$
  elements). So $|c(p)-c(q)| \geq 2$ and thus the arrow between $p$
  and $q$ is a long arrow.
\end{proof}

\begin{theorem}
\label{thm:directedtreechar}
A directed tree is unary FA-pre\-sent\-a\-ble if and only if it is
isomorphic to a tree obtained by starting from a finite directed tree
and attaching to it finitely many shallow stars (at their centres) and
finitely many periodic paths (at their bases).
\end{theorem}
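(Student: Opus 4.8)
The plan is to prove the two directions separately. For the easy direction, suppose $T$ is obtained by starting from a finite directed tree $T_0$ and attaching finitely many shallow stars (at their centres) and periodic paths (at their bases). A finite directed tree is trivially unary FA-pre\-sent\-a\-ble, and shallow stars and periodic paths are unary FA-pre\-sent\-a\-ble by \fullref{Lemma}{lem:shallowstarperiodicpathunaryfa}. Each attachment preserves unary FA-pre\-sent\-a\-bility by \fullref{Lemma}{lem:attachmentunaryfa}. Hence, applying this finitely many times, $T$ is unary FA-pre\-sent\-a\-ble.

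**The hard direction.** For the converse, suppose $T$ is a unary FA-pre\-sent\-a\-ble directed tree. If $T$ is finite there is nothing to prove, so assume $T$ is infinite and fix an injective unary FA-pre\-sent\-a\-tion $(a^*,\phi)$ with diagram having $D$ rows. By \fullref{Lemma}{lem:restrictlongarrows}, every long arrow has one endpoint in $C[0]$, so all arrows entirely within $C[1,\infty)$ are short. The strategy is: first isolate the `core' consisting of $C[0]$ together with everything reachable from $C[0]$ by long arrows; by \fullref{Lemma}{lem:longarrowdegrees} this involves only the vertices of infinite degree (the star centres) and their immediate long-arrow neighbours, and this part of the tree — being built from $C[0]$ (finite) plus finitely many infinite-degree vertices — should be recognizable as a finite tree with finitely many shallow stars attached at the infinite-degree vertices. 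Then the rest of the diagram, lying in $C[1,\infty)$ and connected only by short arrows, decomposes into finitely many `strips' that each live in a fixed row (or pair of adjacent rows); since a short arrow preserves the relative column positions of its endpoints under \fullref{Arrow rules}{arrowrule:up} and~\ref{arrowrule:down}, these strips are periodic — each is generated by a finite template repeated with period $D$ — and so forms a periodic path (or a finite tail) attached at its base to the core.

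**Carrying out the periodicity analysis.** The central work is to make precise the claim that the short-arrow structure on $C[1,\infty)$ is periodic with period $D$. Here I would argue as follows: fix a row $B[k]$. For each $j\geq 1$, the arrows between points in columns $C[j],C[j+1]$ lying in rows $B[\cdot]$ are, by the arrow rules, in bijection (via $\tau_1$) with those between $C[j+1],C[j+2]$, once $j$ is large enough that no column-$0$ obstruction intervenes — indeed \fullref{Arrow rule}{arrowrule:up}(1) and \fullref{Arrow rule}{arrowrule:down}(1) applied to short arrows give exactly this periodicity beyond column $1$. Thus there is a finite `slice' of width (at most) $2$ columns whose internal and adjacent-slice connections repeat verbatim. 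Because $T$ is a tree, this repeating block cannot contain a cycle and cannot be reconnected to the core except through one vertex; tracing the unique simple path structure shows the repeating region is precisely a periodic path (when the block is connected and non-trivial) or collapses to nothing (when the block carries no edges). Assembling: the vertices in $C[0]$ and those adjacent via long arrows form a finite subgraph, the infinite-degree vertices contribute shallow stars, and the periodic strips in $C[1,\infty)$ contribute periodic paths; attaching all of these to the finite core in the manner of \fullref{Definition}{def:attachment} reconstructs $T$.

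**Main obstacle.** I expect the main difficulty to be the bookkeeping in the periodicity step: one must carefully verify that the finitely many rows and the width-$2$ column windows really do capture every edge (using \fullref{Lemma}{lem:restrictlongarrows} to exclude long arrows and \fullref{Lemma}{lem:shortarrowdegrees} to bound local degrees), that the attachment points are unambiguous (using the tree property to guarantee a unique `entry vertex' for each periodic strip into the core), and that a shallow star — rather than some other infinite-degree gadget — is what arises at each infinite-degree vertex. The tree hypothesis is doing a lot of heavy lifting throughout: it forbids the extra cycles that \fullref{Arrow rule}{arrowrule:up} would otherwise create, it forces the branching structure to be locally finite away from the star centres, and it pins down exactly how the periodic and finite pieces can be glued. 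Making all of this rigorous while keeping the case analysis manageable is the crux.
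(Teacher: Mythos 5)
Your easy direction and your high-level picture of the hard direction (a finite core around $C[0]$, shallow stars at the infinite-degree vertices, periodic pieces in the bounded-degree remainder) match the paper's strategy, but the execution of the hard direction has two genuine gaps, one of which is an outright false claim. First, the assertion that the short-arrow part of $C[1,\infty)$ decomposes into ``strips that each live in a fixed row (or pair of adjacent rows)'' repeating with period one column is wrong: a short arrow constrains only the \emph{columns} of its endpoints to differ by at most $1$, so a path of short arrows wanders freely among all $D$ rows, and the spine of a periodic path is obtained (in the paper) by taking a path that visits $D+1$ columns, applying the pigeonhole principle to find two points $x_k,x_l$ in the same row, and translating the subpath between them; the resulting period is $m=c(x_l)-c(x_k)$ columns, which can be anything up to $D$, and the template spans up to $D$ rows. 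Your width-$2$ column window does correctly capture that the \emph{set of arrows} between adjacent columns is translation-invariant beyond $C[1]$, but that local statement does not by itself yield the global decomposition into periodic paths.

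Second, and more importantly, the content you defer to ``bookkeeping'' is actually the mathematical core of the proof, and it is not routine. For the shallow stars you must show that the finite tree hanging off each ray endpoint $q\tau_n$ is finite, that these trees are pairwise isomorphic and disjoint from the core, and that none of them reconnects to $K$; the paper does this via \fullref{Lemma}{lem:shallowstarpathprops}, whose key point is that any branch off a ray has length at most $D$ (otherwise two of its points lie in the same row and a translated copy of the branch closes a cycle, contradicting acyclicity). For the periodic paths you need the analogous but harder bound of \fullref{Lemma}{lem:periodicpathprops}: a branch off the infinite periodic track has length at most $D^2+D$ (the pigeonhole now needs agreement of both the row \emph{and} the column residue modulo $m$). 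Without these bounds nothing rules out, a priori, that a branch off a ray or off the track is itself infinite, or that the branches grow without bound as you move right, in which case the structure would not be a shallow star or a periodic path at all. A further omission is the enlargement of the finite core: the non-periodic prefixes $\beta$ of the paths leaving $K$, and the finitely many points joined to $K$ only by short non-periodic paths, must be absorbed into the finite tree before the attachment picture is correct. You correctly identify the tree hypothesis as doing the heavy lifting, but the specific cycle-creation arguments that exploit it are exactly what is missing from your write-up.
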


\begin{proof}
In one direction, the proof is easy. A graph of the prescribed form is
unary FA-pre\-sent\-a\-ble by
\fullref{Lemmata}{lem:attachmentunaryfa},
\ref{lem:shallowstarperiodicpathunaryfa}.

\smallskip\noindent The other direction of the proof is much longer
and more complex. Let $(a^*,\phi)$ be an injective unary
FA-pre\-sent\-a\-tion for a directed tree $(T,\eta)$. Suppose the
diagram for $(a^*,\phi)$ has $D$ rows.

Since $(T,\eta)$ is a tree, the arrows in the diagram also form a
tree. Thus there is a unique simple (undirected) path between any two
points in the diagram.

Let $K$ consist of $C[0] \cup C[1]$ together with all points that lie
on simple paths starting and ending in $C[0] \cup C[1]$.  Then the
subgraph induced by $K$ is a (connected) finite graph. This will be
part of the finite tree in the statement of the theorem.

Suppose there is an arrow between $p$ and $q'$, where $|c(p)-c(q')|
\geq 2$. Then either $p$ or $q'$ lies in $C[0]$ by
\fullref{Lemma}{lem:restrictlongarrows}. Without loss of generality,
suppose $p \in C[0]$. By \fullref{Arrow rules}{arrowrule:down} and
\ref{arrowrule:up}, there is an arrow between $p$ and $q'\tau_n$ for
all $n \geq -c(q')+2$. Let $m \geq -c(q')+2$ be such that $q'\tau_n
\notin K$ for all $n \geq m$. Let $q= q'\tau_m$. Replace $K$ by $K
\cup \{q\tau_l : -c(q) +2 \leq l < 0\}$. Notice that $K$ remains
connected. (The reasoning in this paragraph and the next is
illustrated in \fullref{Figure}{fig:shallowstarfindrays}.)

\begin{figure}[tb]
\centerline{\includegraphics{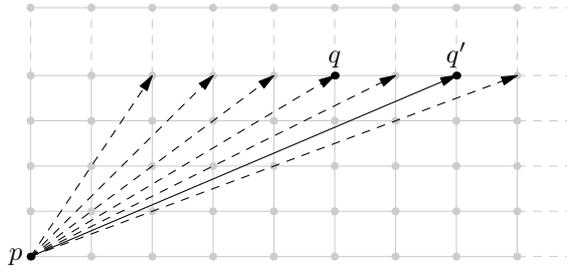}}
\caption{If there is a long arrow (shown as solid in the diagram), it
  must either start or end in $C[0]$; for the sake of illustration, we
  assume it starts at $p \in C[0]$ and ends at $q'$. By \fullref{Arrow
    rules}{arrowrule:down} and \ref{arrowrule:up}, all the arrows
  shown exist. The point $q$ is chosen so that $q\tau_n \notin K$ for
  all $n \geq 0$, and then all points to the left of $q$ are added to
  $K$. (Some of them may lie in $K$ already.) The arrows ending to the
  left of $q$ ensure that $K$ remains connected.}
\label{fig:shallowstarfindrays}
\end{figure}

Then for all $n \geq 0$ the point $q\tau_n$ does not lie in $K$ and
there is an arrow between $p$ and $q\tau_n$. The aim is to show that
the edges corresponding to these arrows are the rays of a shallow star
with centre $p\phi$.

\begin{lemma}
\label{lem:shallowstarpathprops}
For any simple undirected path $\alpha$ starting at
$q\tau_n$, where $n \geq 0$, and not including the arrow from $p$ to $q\tau_n$:
\begin{enumerate}

\item The path $\alpha$ does not visit any point of $K$.

\item For every $m$ with $m \geq -n$, the map $\tau_{m}$ is defined
  for every vertex of $\alpha$, and $(\alpha)\tau_{m}$ is a path in
  the diagram.

\item The path $\alpha$ has length at most $D$.
\end{enumerate}
\end{lemma}

\begin{proof}
\begin{enumerate}

\item Suppose first that $\alpha$ visits some element $x \in
  K$. Without loss of generality, assume $x$ is the first point of $K$
  that $\alpha$ visits. Let $\alpha'$ be the part of $\alpha$ starting
  at $q\tau_n$ and ending at $x$. Since $K$ is a connected set, there
  is a path $\beta$ from $x$ to $p$ wholly within $K$. So the edge
  from $p$ to $q\tau_n$ and the paths $\alpha'$ and $\beta$ form a
  cycle in the diagram, which contradicts $(T,\eta)$ being a tree. So
  the path $\alpha$ does not visit any point of $K$.

\item Suppose that for some $m \geq -n$, the map $\tau_m$ is not
  defined for some point of $\alpha$. (Since $\tau_m$ is always
  defined when $m$ is positive, we know immediately that $m$ is
  negative.) Note that if $\alpha$ included a long arrow, it would
  have to visit $C[0]$, which would contradict part~1 since $C[0]
  \subseteq K$. So $\alpha$ consists only of short arrows. For each
  $t$, let $r_t$ be the $t$-th point visited by $\alpha$. So, since
  $\tau_m$ is undefined for some point on $\alpha$, there is some $s$
  such that $c(r_s) < -m$. Since $\alpha$ consists only of short
  arrows, $c(r_{t+1}) \geq c(r_t) - 1$ for all $t$. Hence there is
  some $s'$ such that $c(r_{s'}) = -m + 1$. Let $\alpha'$ be the
  subpath of $\alpha$ from $q\tau_n$ up to the first point lying in
  $C[r_{s'}]$. Then $\tau_m$ is defined for every point on $\alpha'$
  and the path $(\alpha')\tau_m$ exists by \fullref{Arrow
    rule}{arrowrule:down}, starts at $q\tau_{n+m}$, and ends at some
  point in $C[1]$, which contradicts part~1 applied to the path
  $\alpha'$ since $C[1] \subseteq K$. (This reasoning is illustrated
  \fullref{Figure}{fig:shallowstarpathprops}.)

\begin{figure}[tb]
\centerline{\includegraphics{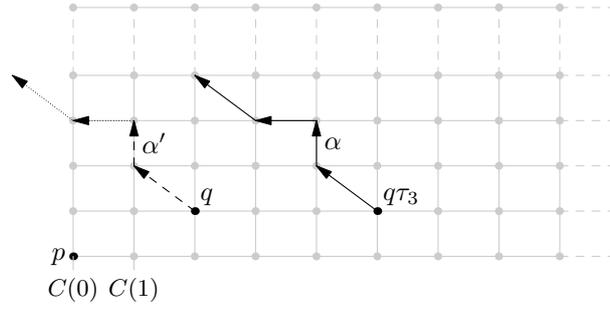}}
\caption{In this illustration, $n = 3$. We suppose there is some path
  $\alpha$ and $m \geq -3$ (here, $m = -3$) such that $(\alpha)\tau_n$
  is undefined. Then $\tau_m$ shifts part of $\alpha$ off the
  left-hand side of the diagram. Since $\alpha$ consists only of short
  arrows, we can choose $\alpha'$ to be that initial part of $\alpha$
  that is shifted by $\tau_n$ to end in $C[1]$.}
\label{fig:shallowstarpathprops}
\end{figure}

 Suppose that $(\alpha)\tau_{m}$ is not a path for some $m \geq
 -n$. \fullref{Arrow rule}{arrowrule:up} shows that $(\alpha)\tau_{m}$
 is a path for all $m \geq 0$. \fullref{Arrow rule}{arrowrule:down}
 shows that $(\alpha)\tau_m$ is a path unless $\tau_m$ shifts some
 point of $\alpha$ to $C[0]$. But in this case, we can choose
 $\alpha'$ as in the previous paragraph so that $(\alpha')\tau_m$ ends
 in $C[1]$ and get the same contradiction. So $(\alpha)\tau_m$ is a
 path in the diagram.

\item Suppose that $\alpha$ has length greater than $D$. Since there
  are only $D$ rows, there are distinct points $x$ and $y$ with $b(x)
  = b(y)$. Interchanging $x$ and $y$ if necessary, assume $c(x) <
  c(y)$. Let $m = c(y) - c(x)$. Then $y = x\tau_m$. So $y$ also lies
  on $\alpha\tau_m$, which is a path in the diagram by part~2. Let
  $\alpha_1$ be the subpath of $\alpha$ from $q\tau_n$ to $y$; let
  $\alpha_2$ be the subpath of $(\alpha)\tau_m$ from $q\tau_{n+m}$ to
  $x\tau_m$. Then the paths $\alpha_1$ and $\alpha_2$ and the arrows
  between $p$ and $q\tau_{n}$ and between $p$ and $q\tau_{n+m}$ form a
  cycle, which contradicts $(T,\eta)$ being a tree. So $\alpha$ cannot
  have length greater than $D$.
\end{enumerate}
\end{proof}

By \fullref{Lemma}{lem:shallowstarpathprops}(2), for any $n \in
\nset^0$, $\beta$ is a simple path starting at $q$ that does not
include the arrow from $p$ to $q$ if and only if $(\beta)\tau_n$ is a
simple path starting at $q\tau_{n}$ that does not include the arrow
from $p$ to $q\tau_{n}$.

Let $F$ consist of all points lying on simple paths starting at $q$
that do not include the arrow from $p$ to $q$. (The set $F$ includes
$q$ itself.) By \fullref{Lemma}{lem:shallowstarpathprops}(3), each of
these paths has length at most $D$. By
\fullref{Lemma}{lem:shallowstarpathprops}(1), none of these paths
visits $K$, and in particular does not visit $C[0]$. Hence, by
\fullref{Lemma}{lem:shortarrowdegrees}, the degree of any point in $F$
is bounded by $4D$ and hence $F$ consists of only finitely many
elements. By \fullref{Lemma}{lem:shallowstarpathprops}(2), every
elements of $F$ lies in $C[2,\infty)$, so $F\tau_n$ is defined
  for all $n \in \nset^0$. By the previous paragraph, all the
  induced subgraphs $F\tau_n$ are isomorphic and so, together with the
  arrows between $p$ to $q\tau_n$, form a shallow star with center
  $p$, and this shallow star is attached at its centre $p\phi$ to some
  vertex of the finite tree.

Since this shallow star contains every point $q\tau_n$, it is clear
that there can be at most $D$ such shallow stars.

Let $L = (a^D)^*F$. Then $L = \bigcup_{n \in \nset^0}
F\tau_n$. The language $L$ consists of all the points corresponding to
vertices of the shallow star centered at $p\phi$ except the point $p$
itself. Since $F$ is finite, $L$ is regular. Since there are at most
$D$ different shallow stars, the language $M'$ formed by the union of
the languages $L$ corresponding to the various shallow stars is
regular. Thus the language $M = a^* - M'$ consisting of words that
either lie outside these shallow stars or are the centres of the
shallow stars, is regular. Hence the subgraph induced by $M\phi$ is
also unary FA-pre\-sent\-a\-ble. Notice that the arrows corresponding
to edges in this induced subgraph are all either short or run between
points in $K$. Thus a bounded number of arrows corresponding to edges
in this induced subgraph start or end at any point of $M$ and thus
every vertex of this induced subgraph has bounded degree. Furthermore,
the original graph is obtained by attaching at most $D$ shallow stars
to this subgraph.

\medskip
Therefore we have reduced to case of a unary FA-pre\-sent\-a\-ble directed
tree $(T,\eta)$ whose vertices all have bounded degree. So we now
assume that $(a^*,\phi)$ is a unary FA-pre\-sent\-a\-tion for such a
graph. By \fullref{Lemma}{lem:shortarrowdegrees}, the diagram for
$(a^*,\phi)$ contains only short arrows. We define $K$ as before, to
consist of $C[0] \cup C[1]$ together with all points that lie on
simple paths starting and ending in $C[0] \cup C[1]$.

Consider any path $\alpha$ starting from some point in $p \in K$ and
otherwise only visiting points outside $K$. Suppose this path visits
points of at least $D+1$ different columns in $C[p,\infty)$. Then
  $\alpha$ visits at least one point in every column in
  $C[p,c(p)+D]$. For each $l \in \{c(p),\ldots,c(p)+D\}$, let
  $x_l$ be the first point $\alpha$ visits in $C[l]$. Since $\alpha$
  consists only of short arrows, it visits some point in each column in
  $C[p,l)$ before it visits $x_l$. Hence $\alpha$ visits $x_k$
    before $x_l$ whenever $k < l$. Since there are only $D$ rows,
    there exist $k$ and $l$ with $k < l$ such that $b(x_k) = b(x_l)$.

\begin{figure}[tb]
\centerline{\includegraphics{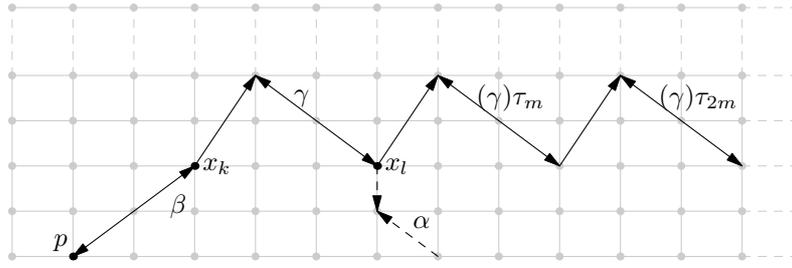}}
\caption{The path $\alpha$ starting at $p$ visits two points $x_k$ and
  $x_l$ lying in the same row. The subpath $\beta$ is the part of
  $\alpha$ from $p$ to $x_k$; the subpath $\gamma$ is the part of
  $\alpha$ from $x_k$ to $x_l$. A small part of $\alpha$ is not
  included in either $\beta$ or $\gamma$. Translating $\gamma$ to the
  right through multiples of $m$ and concatenating the result gives an
  infinite path starting at $p$. We define the infinite peeiodic track
  to be the concatenation of $(\gamma)\tau_m$, $(\gamma)\tau_{2m}$,
  \ldots. The points on $\beta$ and $\gamma$ will be added to $K'$.}
\label{fig:periodicpathfindspine}
\end{figure}

Let $\beta$ be the subpath of $\alpha$ from $p$ to $x_k$ and let
$\gamma$ be the subpath from $x_k$ to $x_l$. Let $m = l - k = c(x_l) -
c(x_k)$. Then $x_l = x_k\tau_m$. Since $\gamma$ is a subpath of
$\alpha$, it lies entirely outside $K$ and so does not visit
$C[0]$. Hence the path $(\gamma)\tau_n$ is defined and present for all
$n \in \nset^0$ by \fullref{Arrow rule}{arrowrule:up}. So the
concatenation of $(\gamma)\tau_m, (\gamma)\tau_{2m},\ldots$ is an
infinite path starting at $x_l$ and formed by `periodic' repetitions
of a translation of $\gamma$. (See
\fullref{Figure}{fig:periodicpathfindspine}.) We will call these paths
the \defterm{infinite periodic tracks}. Notice that this infinite
periodic track does not include $\gamma$ or $\beta$.

The aim is to show that the edges corresponding to the arrows in this
infinite periodic track form the spine of a periodic path. Notice that
there can be at most $D$ distinct infinite periodic tracks, since each
one must visit at least one vertex in each column to the right of its
starting-point (because there are only short arrows in this
diagram). Furthermore, these infinite periodic tracks will be
disjoint.

First of all, we are going to define a finite set $L$ of points. These
points will correspond to the vertices that do not form part of an
infinite periodic path. We need to ensure everything `non-periodic'
lies in $L$. First of all, we will gather all the points lying on the
various paths $\beta$ and $\gamma$ into a set $K'$, then we will deal
with any other points in that might be connected to $K$ by paths that
do not give rise to infinite periodic tracks.

Let $\beta$ and $\gamma$ range over all possible values obtained from
paths $\alpha$ as described above. Then let $K'$ consist of the points
on the various paths $\beta$ and $\gamma$ except the last point of
$\gamma$ (which is also the first point on $(\gamma)\tau_m$). The set
$K'$ is finite since there are at most $D$ infinite periodic tracks
and each path $\beta$ is of bounded length. Notice that all the
infinite periodic tracks start from a point adjacent to a point in
$K'$ (and indeed visit no points of $K'$).

Let $x$ be some point that is connected to $K$ by a path that does not
include any point of an infinite periodic track. Then $x$ is connected
to some point $s \in K$ by a simple path $\alpha$ of bounded length,
since otherwise it would visit at least $D+1$ columns of $C[s,\infty)$
  and so would give rise to an infinite periodic track by the
  reasoning above and thus would include at least the first point (the
  point $x_l$, which is the first point of $\gamma\tau_m$) of this
  path. Therefore all such points $x$ are connected to $K$ by simple
  paths of bounded length. Since the vertices of $(T,\eta)$ have
  bounded degree by assumption, there can only be finitely many such
  points. Let $K''$ be the set of these points. Let $L = K \cup K'
  \cup K''$, and let $L'$ be $L$ together with the starting points of
  the infinite periodic tracks (each of which is adjacent to some
  point of $K' \subseteq L$. Notice that $L$ and $L'$ are connected
  sets, and that any point not in $L$ is connected to some point in
  $K'\subseteq L$ by a unique simple path that visits at least one
  element of an infinite periodic track.

\begin{lemma}
\label{lem:periodicpathprops}
Let $\delta$ be a path starting at a point on some $(\gamma)\tau_{hm}$
(for some $h \in \nset$) and not including any edge of
$(\gamma)\tau_{hm}$ or $\gamma$. (Here $\gamma$ is as in the
definition of an infinite periodic track.) That is, $\delta$ `branches
off' from the infinite periodic track at some point on
$(\gamma)\tau_{hm}$. Then:
\begin{enumerate}

\item The path $\delta$ does not visit any point of $L$.

\item For every $j$ with $j > -h$, the path $(\delta)\tau_{(h+j)m}$ is
  defined and present in the diagram.

\item The path $\delta$ has length at most $D^2+D$.

\end{enumerate}
\end{lemma}

\begin{proof}
\begin{enumerate}
\item Suppose that $\delta$ visits some point $y$ of $L$. Without loss
  of generality, assume $y$ is the first point of $L$ that $\delta$
  visits. Now, if $\delta$ branches off from the infinite track at
  any point except its first point (the first point of
  $(\gamma)\tau_m$), then the second point on $\delta$ must lie
  outside $L$. (Since $L$, by definition, contains no points connected
  to $K \subseteq L$ by a path including a point of an infinite
  track.) On the other hand, if $\delta$ branches off at the the
  first point of $(\gamma)\tau_m$, then the first arrow on $L$ is not
  the last arrow on $\gamma$, and so again the second point of
  $\delta$ lies outside $L$. Therefore we obtain a cycle by following
  the path $\delta$ to $y$, then the path in $L$ back to the start of
  the infinite track, then back along the infinite path to the start
  of $\delta$. This is a contradiction, and so $\delta$ cannot visit
  any point of $L$.

\item Suppose that for some $j > -h$, the path $(\delta)\tau_{(h+j)m}$
  is not defined. Since $\delta$ consists only of short arrows, we can
  use the reasoning in the proof of
  \fullref{Lemma}{lem:shallowstarpathprops} to take a shorter path
  $\delta'$ such that $(\delta')\tau_{(h+j)m}$ is defined, present in
  the diagram, and ends at some point in $C[1] \subseteq L$. (See
  \fullref{Figure}{fig:periodicpathpathprops}.) Since $\delta$ does
  not include any edge of any $(\gamma)\tau_{hm}$ or $\gamma$, neither
  does $\delta'$. Hence $(\delta')\tau_{(h+j)m}$ does not include any
  edge of $\gamma$ or $(\gamma)\tau_m$. This contradicts part~1
  applied to $(\delta')\tau_{(h+j)m}$.

\begin{figure}[tb]
\centerline{\includegraphics{\jobname-tree-periodicpathpathprops.eps}}
\caption{We suppose there is some path
  $\delta$ and $j> -h$ such that $(\delta)\tau_{(h+j)m}$
  is undefined. Then $\tau_{(h+j)m}$ shifts part of $\delta$ off the
  left-hand side of the diagram. Since $\delta$ consists only of short
  arrows, we can choose $\delta'$ to be that initial part of $\delta$
  that is shifted by $\tau_{(h+j)m}$ to end in $C[1]$.}
\label{fig:periodicpathpathprops}
\end{figure}

\item Suppose $\delta$ has length greater than $D^2+D$. Let $\delta$
  branch off from the infinite periodic track at a point $q$. Since
  $\delta$ includes no point from $C[0] \subseteq L$, the path
  $(\delta)\tau_{jm}$ is defined for all $j \in \nset^0$. Since the
  infinite track is made up of $(\gamma)\tau_m$,
  $(\gamma)\tau_{2m}$,\ldots, it follows that $q\tau_{jm}$ lies on the
  infinite track for all $j \in \nset^0$. Then since $\delta$ has
  length greater than $D^2+D$ and there are only $D$ rows and $m \leq
  D+1$ there exist two points $x$ and $y$ on $\delta$ such that $b(x)
  = b(y)$ and $c(x) \equiv c(y) \pmod m$. Without loss of generality,
  suppose $c(x) < c(y)$. Let $j = (c(y)-c(x))/m$. Then $x\tau_{jm} =
  y$, and so $y$ also lies on the path $(\delta)\tau_{jm}$, which is
  defined and present in the diagram by \fullref{Arrow
    rule}{arrowrule:up}. Let $\alpha_1$ be the subpath of $\delta$
  from $q$ to $y$; let $\alpha_2$ be the subpath of
  $(\delta)\tau_{jm}$ from $q\tau_{jm}$ to $y$. Then $\alpha_1$,
  $\alpha_2$, and the part of the infinite periodic track between $q$
  and $q\tau_{jm}$ form a cycle, which is a contradiction. So $\delta$
  cannot have length greater than $D^2+D$.

\end{enumerate}
\end{proof}

For $i \in \nset$, let $F_i$ consist of all the points connected to
$(\gamma)\tau_{im}$ by any simple path that does not include any edges
of $(\gamma)\tau_{im}$ or $(\gamma)\tau_{(i-1)m}$. (That is, $F_i$
consists of the points lying on paths that branch off from the
infinite periodic track at some point on $(\gamma)\tau_{im}$. Notice
that $F_i$ contains all points of $(\gamma)\tau_{im}$.) By
\fullref{Lemma}{lem:periodicpathprops}(3), each such path is of
bounded length. Since the graph is of bounded degree, each $F_i$ is
finite. Furthermore, by \fullref{Lemma}{lem:periodicpathprops}(3),
$F_i\tau_{jm} = F_{i+j}$ (where $i \in \nset$ and $j \in \zset$ with $-i
< j$), and the subgraphs induced by each $F_i\phi$ are isomorphic.

Let $(S,\sigma)$ be the subgraph induced by the vertices $F_1\phi$. Let
$s_0$ be the vertex corresponding to the first vertex on
$(\gamma)\tau_m$; Let $s_1$ be the vertex corresponding to the last
vertex on $(\gamma)\tau_m$. Then since the subgraphs induced by the
$F_i\phi$ are isomorphic, the infinite periodic track
$(\gamma)\tau_m,(\gamma)\tau_{2m},\ldots$ is mapped by $\phi$ to the
spine of the infinite periodic path $\mathcal{P}(S,\sigma,s_0,s_1)$. It
is clear that $\mathcal{P}(S,\sigma,s_0,s_1)$ is attached at its base to
some vertex in the rest of the graph.

It has already been established that there are only finitely many
infinite periodic paths, so the graph must be made up of finitely many
periodic paths attached to the finite subgraph induced
by $L'\phi$. This completes the proof in this direction.
\end{proof}

Equipped with a characterization of unary FA-pre\-sent\-a\-ble directed
trees, we now turn to characterizing the unary FA-pre\-sent\-a\-ble directed
forests:

\begin{theorem}
\label{thm:directedforestchar}
A countable directed forest is unary FA-pre\-sent\-a\-ble if and only if:
\begin{enumerate}
\item It has only finitely many infinite components, each of which is
  a unary FA-pre\-sent\-a\-ble directed tree.
\item There is a bound on the size of its finite
  components.
\end{enumerate}
\end{theorem}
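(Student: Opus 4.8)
The plan is to treat the two directions separately, with essentially all of the real work going into identifying a single infinite component as a unary FA-presentable structure in its own right; the rest is bookkeeping with disjoint unions.

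For the forward implication, suppose $(F,\eta)$ is a unary FA-presentable countable directed forest. If $F$ is finite there is nothing to prove, so assume $F$ is infinite and fix, using \fullref{Theorem}{thm:allwords}, an injective unary FA-presentation $(a^*,\phi)$. Let $Q$ be the equivalence relation generated by $\eta$; since $F$ is a forest, the $Q$-classes are precisely the (weakly) connected components of $F$, each of which is a directed tree. By \fullref{Corollary}{corol:equivrelgen}, $\Lambda(Q,\phi)$ is regular, so $(F,Q)$ is a unary FA-presentable equivalence relation, and \fullref{Theorem}{thm:equivrelchar} then shows that there are only finitely many infinite components and a bound on the sizes of the finite components. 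This gives condition~(2) and the cardinality part of condition~(1). For the remainder of~(1), fix an infinite component $C$ and some $w_0 \in C\phi^{-1}$. The language $C\phi^{-1} = \{w \in a^* : (w_0,w) \in \Lambda(Q,\phi)\}$ is the projection onto the second coordinate of the regular relation $\Lambda(Q,\phi) \cap (\{w_0\}\times a^*)$, hence is regular; and $\Lambda(\eta,\phi) \cap (C\phi^{-1})^2$ is regular, being an intersection of regular relations. Thus the restriction of $(a^*,\phi)$ to $C\phi^{-1}$ is a unary FA-presentation of the subgraph induced on $C$, which is a directed tree since it is a connected component of a forest.

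For the converse, suppose $(F,\eta)$ is a countable directed forest satisfying~(1) and~(2), with infinite components $C_1,\ldots,C_r$ (each unary FA-presentable by hypothesis) and with all finite components of size at most $N$. There are only finitely many isomorphism types of directed graph on at most $N$ vertices; for each such type $\Theta$, the disjoint union of those finite components isomorphic to $\Theta$ is unary FA-presentable --- if there are infinitely many of them by \fullref{Lemma}{lem:countableunion}, and otherwise because it is a finite structure. Applying \fullref{Lemma}{lem:union} finitely many times, the disjoint union of all the finite components is unary FA-presentable; applying it finitely many more times to combine this with $C_1,\ldots,C_r$ shows that $F$ itself, which is the disjoint union of all of these pieces, is unary FA-presentable.

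The only step with genuine content is the extraction of an individual infinite component in the forward direction: this is precisely where one needs that the connectivity relation $Q$ has a regular $\Lambda(Q,\phi)$, and hence where the transitive-closure result (\fullref{Theorem}{thm:transclosure}, via \fullref{Corollary}{corol:equivrelgen}) --- a phenomenon special to \emph{unary} FA-presentations --- is essential. Everything else reduces to closure properties of regular languages and relations and to the behaviour of disjoint unions under unary FA-presentability.
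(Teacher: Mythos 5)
Your proposal is correct and follows essentially the same route as the paper: the forward direction extracts the component structure via the equivalence relation generated by $\eta$ (regular by \fullref{Corollary}{corol:equivrelgen}), applies \fullref{Theorem}{thm:equivrelchar}, and restricts the presentation to the regular language of representatives of each infinite component; the converse direction groups the finite components by isomorphism type and assembles everything with \fullref{Lemmata}{lem:union} and \ref{lem:countableunion}. The only cosmetic difference is that you justify the regularity of $C\phi^{-1}$ by projection of a regular relation where the paper invokes first-order definability, which amounts to the same thing.
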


\begin{proof}
Let $(a^*,\phi)$ be an injective unary FA-pre\-sent\-a\-tion for a directed
forest $(T,\eta)$. Let $\zeta$ be the equivalence relation generated by
$\eta$; then $\Lambda(\zeta,\phi)$ is regular by
\fullref{Corollary}{corol:equivrelgen} and so $(a^*,\phi)$ is a unary
FA-pre\-sent\-a\-tion for $(T,\eta,\zeta)$. Notice that $\zeta$ is the
undirected reachability relation on $(T,\eta)$ and so its equivalence
classes are the connected components of $(T,\eta)$, which are directed
trees. By \fullref{Theorem}{thm:equivrelchar}, there are finitely many
infinite components and a bound on the cardinality of the finite
components. Consider some infinite component $U$. Notice that the set
of elements in $U$ first-order definable in terms of a $\zeta$ and
some $u \in U$. So the language $K$ of words in $a^*$ that represent
elements of $U$ (that is, $K = U\phi^{-1}$) is regular. Thus
$(K,\phi|_K)$ is a unary FA-pre\-sent\-a\-tion for the component $U$. Since
$U$ was arbitrary, every infinite component is unary
FA-pre\-sent\-a\-ble. This completes one direction of the proof.

Let $(T,\eta)$ be a countable directed forest that has only finitely many
infinite components, each of which is a unary FA-pre\-sent\-a\-ble directed
tree, and with a bound on the size of its finite
components.

Consider the finite components. Since each is a directed tree and
there is a bound on their cardinalities, there are only finitely many
isomorphism types amongst them. Let $(P_1,\pi_1),\ldots,(P_p,\pi_p)$
be those finite components whose isomorphism types appear only
finitely many times among the finite components of $(T,\eta)$. Suppose
there are $q$ different isomorphism types that appear infinitely
often. For $i \in \{1,\ldots,q\}$, choose a represenative
$(Q_i,\kappa_i)$ of each isomorphism class. Let
$(R_1,\rho_1),\ldots,(R_r,\rho_r)$ be the infinite components.

For each $i$, the union of countably many copies of the finite
directed tree $(Q_i,\kappa_i)$ is unary FA-pre\-sent\-a\-ble by
\fullref{Lemma}{lem:countableunion}. The union $(Q,\kappa)$ of the $q$
forests thus obtained is unary FA-pre\-sent\-a\-ble by iterated application
of \fullref{Lemma}{lem:union}. Thus the directed forest $(T,\eta)$,
which is the union of $(Q,\kappa)$ and the various $(P_i,\pi_i)$ and
$(R_i,\rho_i)$ is unary FA-pre\-sent\-a\-ble by  iterated application
of \fullref{Lemma}{lem:union}.
\end{proof}

Finally, we can apply the characterization of unary FA-pre\-sent\-a\-ble
directed forests to obtain a characterization of unary FA-pre\-sent\-a\-ble
[undirected] forests:

\begin{theorem}
\label{thm:forestchar}
A forest is unary FA-pre\-sent\-a\-ble if and only if it can be obtained
from a unary FA-pre\-sent\-a\-ble directed forest by changing directed edges
to undirected edges (that is, by replacing the edge relation with its
symmetric closure).
\end{theorem}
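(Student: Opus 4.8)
The plan is to prove the two directions directly; the only substantive ingredient is an elementary observation about injective unary presentations, so I expect no real obstacle.

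For the ``if'' direction I would start from a unary FA-presentable directed forest $(F,\eta)$ and its symmetric closure $\epsilon = \eta \cup \eta^{-1}$. Fixing a unary FA-presentation $(L,\phi)$ for $(F,\eta)$, the relation $\epsilon$ is first-order definable from $\eta$, so $\Lambda(\epsilon,\phi)$ is regular by \fullref{Proposition}{prop:firstorderreg} (equivalently, $\conv\Lambda(\epsilon,\phi)$ is the union of $\conv\Lambda(\eta,\phi)$ with its transpose across the two tapes); hence $(F,\epsilon)$ is unary FA-presentable.

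For the ``only if'' direction I would take a unary FA-presentable forest $(F,\epsilon)$; the finite case is trivial, so assume $F$ is infinite and, by \fullref{Theorem}{thm:allwords}, fix an injective unary FA-presentation $(a^*,\phi)$. The crucial point is that injectivity over a one-letter alphabet forces distinct elements to have distinct lengths, so $\ell$ is injective on $F$, and since a forest has no loops $\epsilon$ is irreflexive; hence every edge of $\epsilon$ joins two elements of different $\ell$-value. I would then orient each edge from its shorter to its longer endpoint, i.e.\ set
\[
\eta = \{(x,y) \in \epsilon : \ell(x) < \ell(y)\}.
\]
Using symmetry and irreflexivity of $\epsilon$ and the absence of equal-length edges, one checks $\eta \cup \eta^{-1} = \epsilon$, so the underlying undirected graph of $(F,\eta)$ is the forest $(F,\epsilon)$ and $(F,\eta)$ is a directed forest.

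It then remains to observe that $(a^*,\phi)$ presents $(F,\eta)$: since $\ell(w\phi) = |w|$, we have $\Lambda(\eta,\phi) = \Lambda(\epsilon,\phi) \cap \{(w_1,w_2) \in (a^*)^2 : |w_1| < |w_2|\}$, an intersection of regular languages (the second having $\conv$-image $(a,a)^*(\$,a)(\$,a)^*$), hence regular. So $(F,\eta)$ is a unary FA-presentable directed forest from which $(F,\epsilon)$ is recovered by passing to the symmetric closure. The whole argument hinges on the ``distinct lengths'' observation, which both supplies a canonical orientation of the edges and ensures it is recognised by an automaton; there is no genuinely hard step.
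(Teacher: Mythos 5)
Your proposal is correct and follows essentially the same route as the paper's proof: the ``if'' direction via first-order definability of the symmetric closure, and the ``only if'' direction by taking an injective presentation over $a^*$ and orienting each edge from the shorter representative to the longer one, with regularity of the oriented relation coming from an automaton comparing track lengths. The only (harmless) difference is that you make explicit the finite case and the intersection-of-regular-languages formulation, which the paper leaves implicit.
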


\begin{proof}
First, notice that if $(T,\eta)$ is a unary FA-pre\-sent\-a\-ble directed
forest, then the symmetric closure $\sigma$ of $\eta$ is first-order
definable in terms of $\eta$. Thus $(T,\sigma)$ is also unary
FA-pre\-sent\-a\-ble.

Let $(T,\eta)$ be forest admitting an injective unary FA-pre\-sent\-a\-tion
$(a^*,\phi)$. The edge relation $\eta$ is symmetric. Define a new
relation $\eta'$ as follows
\[
(s,t) \eta' \iff \bigl((s,t) \in \eta\bigr) \land \bigl(\ell(s) < \ell(t)\bigr).
\]
Then $(T,\eta')$ is a directed graph. For every pair of elements $s,t$
that are connected by an (undirected) edge in $(T,\eta)$ (that is,
both $(s,t)$ and $(t,s)$ are in $\eta$), exactly one of $\ell(s) <
\ell(t)$ or $\ell(t) < \ell(s)$ holds, and thus there is either a
directed edge from $s$ to $t$ in $(T,\eta')$ (that is, $(s,t) \in
\eta'$) or an edge from $t$ to $s$ in $(T,\eta')$ (that is $(t,s) \in
\eta'$). So $(T,\eta')$ is an undirected forest. Furthermore,
$\Lambda(\eta',\phi)$ is regular since a finite automaton can compare
the lengths of its two input words. Thus $(T,\eta')$ is unary
FA-pre\-sent\-a\-ble. It is clear that making $\eta'$ symmetric yields
$(T,\eta)$.
\end{proof}

\section{Maps and partial maps}

In this final section, we apply the results of previous sections,
particularly \fullref{\S}{sec:trees}, to classify the orbit structures
of unary FA-pre\-sent\-a\-ble maps and more generally partial maps.

The graph of a partial map $f : X \to X$ is the directed graph with
vertex set $X$ and edge set $\{(x,(x)f) : x \in X, \text{$(x)f$ is
  defined}\}$. Two elements $x,y \in X$ lie in the same orbit if there
exist $m,n \in \nset^0$ such that $(x)f^m = (y)f^n$. They lie in the
same strong orbit if there exist $m,n \in \nset^0$ such that $(x)f^m =
y$ and $(y)f^n = x$. In terms of the graph, $x$ and $y$ lie in the
same orbit if they are connected by an undirected path; $x$ and $y$
lie in the same strong orbit if there is a directed path from $x$ to
$y$ and a directed path from $y$ to $x$. Hence the orbits of the
partial map are the connected components of the graph; the strong
orbits of the partial map are the strongly connected components of the
graph. Our characterization results are all stated in these terms.

We start by characterizing unary FA-presentable maps and partial
(\fullref{Theorem}{thm:mapchar}). Starting from this result, we then
obtain characterizations of unary FA-presentable injections and
partial injections (\fullref{Theorem}{thm:injectionchar}), surjections
and partial surjections (\fullref{Theorem}{thm:surjectionchar}), and
bijections and partial bijections
(\fullref{Theorem}{thm:bijectionchar}). Naturally, the
characterization for bijections is equivalent to the previously known
one \cite[Theorem~7.12]{blumensath_diploma}, albeit in a very
different form.

Like the characterization result for directed trees
(\fullref{Theorem}{thm:directedtreechar}), the characterization
results for orbit structures of unary FA-pre\-sent\-a\-ble partial
maps are stated in terms of attaching periodic paths and shallow stars
to finite graphs. We need to specify certain special types of periodic
paths and shallow stars, and also define some related terms:

\begin{definition}
Retain notation from
\fullref{Definition}{def:shallowstarperiodicpath}.

An \defterm{inward rooted tree} is a tree with a distinguished vertex,
called the \defterm{root}, towards which all its edges are oriented.

A shallow star $\mathcal{S}(T,\eta,t_0,t_1)$ is \defterm{inwardly
  oriented} if every edge of the template graph $(T,\eta,t_0,t_1)$ is
oriented towards the distinguished vertex $t_0 = t_1$. That is, every edge
of $\mathcal{S}(T,\eta,t_0,t_1)$ is oriented towards the centre vertex.

A periodic path $\mathcal{S}(T,\eta,t_0,t_1)$ is
\defterm{inwardly oriented} if every edge of the template graph
$(T,\eta,t_0,t_1)$ is oriented towards the distinguished vertex
$t_0$. That is, every edge of $\mathcal{P}(T,\eta,t_0,t_1)$ is oriented
towards the base.

A periodic path $\mathcal{S}(T,\eta,t_0,t_1)$ is \defterm{outwardly
  oriented} if every edge of the template graph $(T,\eta,t_0,t_1)$ is
oriented towards the distinguished vertex $t_1$. That is, every edge
of $\mathcal{P}(T,\eta,t_0,t_1)$ is oriented towards the unbounded
direction of the spine.

An \defterm{inward path} is the periodic path
$\mathcal{P}(\{t_0,t_1\},\{(t_1,t_0)\},t_0,t_1)$ (where the template
has two vertices $t_0$ and $t_1$ and a single edge from $t_1$ to
$t_0$). Notice that an inward path is an inwardly oriented periodic
path.

An \defterm{outward path} is the periodic path
$\mathcal{P}(\{t_0,t_1\},\{(t_0,t_1)\},t_0,t_1)$ (where the template
has two vertices $t_0$ and $t_1$ and a single edge from $t_0$ to
$t_1$). Notice that an outward path is an outwardly oriented periodic
path.

A \defterm{bi-infinite path} is a directed path with vertex set $\{v_i
: i \in \zset\}$ and edge relation $\{(v_i,v_{i+1}) : i \in \zset\}$;
this is isomorphic to the path obtained by attaching an outward path
to an inward path at their base vertices.
\end{definition}

\begin{theorem}
\label{thm:unarymapsingle}
A map with a single orbit is unary FA-pre\-sent\-a\-ble if and only
if its orbit can be obtained in one of the following ways:
\begin{enumerate}

\item Start with a finite directed cycle. First, attach finitely many
  inward rooted finite trees (at their roots) to the cycle. To any
  vertices of the resulting finite graph, attach finitely many
  inwardly oriented periodic paths (at their bases) and
  finitely many inwardly oriented shallow stars (at their centres).

\item Start with an inward rooted finite tree. To the root of the tree,
  attach one outwardly oriented periodic path (at its base). To
  any vertices of the resulting graph, attach finitely many inwardly
  oriented periodic paths (at their bases) and finitely many inwardly
  oriented shallow stars (at their centres).

\end{enumerate}
\end{theorem}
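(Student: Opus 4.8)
The plan is to prove both implications using the structure theory of functional graphs together with the characterizations of unary FA-presentable directed trees and forests (Theorems~\ref{thm:directedtreechar} and~\ref{thm:directedforestchar}). Throughout I identify the map $f$ with its functional graph $\eta = \{(x,f(x)) : x \in X\}$, so that a unary FA-presentation of $(X,f)$ is exactly a unary FA-presentation of the digraph $(X,\eta)$; recall that the orbits of $f$ are precisely the weak connected components of $\eta$, so ``single orbit'' means ``$\eta$ is weakly connected''.

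For the ``if'' direction I would first check that each of the two constructions yields a digraph in which every vertex has out-degree exactly one. This is a routine vertex-by-vertex bookkeeping: in each building block (a finite directed cycle, an inward rooted finite tree, an inwardly or outwardly oriented periodic path, an inwardly oriented shallow star) every vertex that is not an amalgamation point already has out-degree one, while at each amalgamation exactly one of the two amalgamated vertices (a cycle vertex, a tree root, the base of an inward periodic path, the centre of an inward shallow star) contributes out-degree zero, so out-degrees never exceed one. Weak connectedness is immediate from the attachment construction, and the digraph has a single orbit because every forward orbit is eventually absorbed into the unique ``sink at infinity'': the cycle in case~1, the spine of the outward periodic path in case~2. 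Unary FA-presentability then follows from Lemmata~\ref{lem:attachmentunaryfa} and~\ref{lem:shallowstarperiodicpathunaryfa}, since finite graphs, shallow stars, and periodic paths are unary FA-presentable and attachment preserves this.

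For the ``only if'' direction, let $(a^*,\phi)$ be an injective unary FA-presentation of a single-orbit map, so that $\Lambda(\eta,\phi)$ is regular. A weakly connected functional graph contains at most one cycle, which is necessarily finite, so there are two cases. If a cycle $Z$ is present, then deleting its finitely many edges preserves $\Lambda$-regularity and yields a unary FA-presentable directed forest whose (at most $|Z|$) components are inward rooted trees with roots on $Z$; Theorem~\ref{thm:directedforestchar} followed by Theorem~\ref{thm:directedtreechar} writes each infinite component as a finite directed tree with finitely many shallow stars and periodic paths attached, and since every edge of an inward rooted tree points towards the root, each attached star and path is forced to be inwardly oriented, its attachment point lying in the finite part. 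Reassembling $Z$ with the finite components, the finite tree parts, and the inwardly oriented stars and paths gives case~1. If no cycle is present, the graph is an infinite directed tree, and Theorem~\ref{thm:directedtreechar} applies directly. Because $f$ is total and acyclic, any forward orbit is an infinite simple directed path; it can lie neither in the finite part nor in a shallow star (which has bounded depth from its centre), so it must eventually run along the spine of an attached periodic path, forcing that path to be outwardly oriented; and because all forward orbits of a single-orbit acyclic map eventually coincide, that outward periodic path is unique. Every other attached periodic path then has its spine flowing towards its base (a spine edge pointing outward would produce a second outward ray, and a spine vertex discharging into a finite hanging subtree would produce a cycle), and every attached shallow star is inwardly oriented (otherwise its centre would have infinite out-degree). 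Finally, the finite tree part flows entirely into the base $b$ of the outward periodic path, hence is an inward rooted finite tree with root $b$; reassembling gives case~2.

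The hard part will be the forced-orientation analysis in the ``only if'' direction: showing that each shallow star and periodic path returned by Theorem~\ref{thm:directedtreechar} can be oriented only in the one way compatible with $f$ being a total map having the prescribed orbit structure. This needs careful tracking of which vertex of each attached piece carries its unique out-edge, an appeal to acyclicity to forbid a vertex from discharging its out-edge into a finite hanging subtree, and an appeal to the single-orbit hypothesis (equivalently, eventual coincidence of all forward orbits) to forbid two outwardly oriented periodic paths. A secondary point to verify carefully is that the attachment points supplied by Theorem~\ref{thm:directedtreechar} are exactly the cycle vertices, bases, and centres demanded by the statement, and that in case~2 the finite tree part may legitimately be taken to be rooted at the base of the outward periodic path.
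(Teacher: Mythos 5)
Your proposal is correct, and its overall architecture --- case split on whether the single orbit contains a cycle, reduction to \fullref{Theorem}{thm:directedtreechar}, and a forced-orientation analysis using out-degree one --- matches the paper's proof. The one genuine divergence is in the cycle case: the paper forms the strong-orbit relation $R$ from the transitive closure of $f$ (so it invokes \fullref{Theorem}{thm:transclosure} and the quotient construction of \fullref{Lemma}{lem:amalgamatingfa} to contract the unique cycle to a fixed point $z$), deletes the resulting loop, and applies the directed-tree characterization to the single tree that remains; you instead delete the finitely many cycle edges directly, observe that regularity is preserved, and apply \fullref{Theorem}{thm:directedforestchar} followed by \fullref{Theorem}{thm:directedtreechar} to the at most $|Z|$ inward rooted trees hanging off the cycle. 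Your route avoids the explicit quotient step and the direct appeal to the transitive-closure theorem at this point (though that machinery still enters implicitly, since the proof of \fullref{Theorem}{thm:directedforestchar} uses \fullref{Corollary}{corol:equivrelgen} to separate components); the paper's contraction buys a single application of the tree theorem rather than one per cycle vertex. In the acyclic case your argument is essentially the paper's, with the minor difference that you derive uniqueness of the outwardly oriented periodic path from eventual coincidence of forward orbits in a single weak orbit, where the paper derives it from out-degree considerations on the finite tree $F$; both are sound. The points you flag as needing care (that each attached periodic path in a functional graph must be wholly inwardly or wholly outwardly oriented, and that the attachment points are the cycle vertices, bases, and centres demanded by the statement) are treated at the same level of brevity in the paper itself, so nothing is missing relative to the published argument.
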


[\fullref{Figure}{fig:maporbits} shows some examples of orbits
  described in \fullref{Theorem}{thm:unarypartialmapsingle}. Notice that if
  no periodic paths or shallow stars are attached case~1 gives a finite
  graph. Case~2 requires that an outwardly oriented periodic path is
  attached and so always yields an infinite graph.]

\begin{figure}[t]
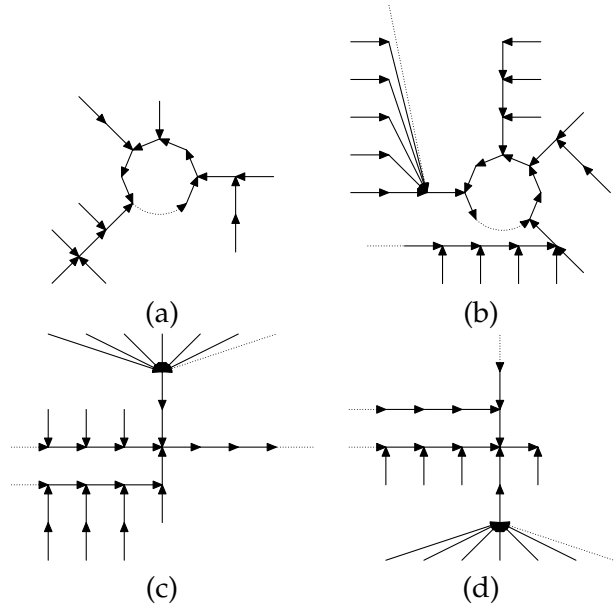

\centerline{%
\begin{tabular}{cc}
\includegraphics{\jobname-map-general1.eps} &
\includegraphics{\jobname-map-general2.eps} \\
(a) & (b) \\
\includegraphics{\jobname-map-general3.eps} &
\includegraphics{\jobname-map-general4.eps} \\
(c) & (d)
\end{tabular}%
}
\caption{Possible example orbits of a unary FA-pre\-sent\-a\-ble maps
  and partial maps: (a) is finite, (b) contains a cycle (arising from
  case~1 in \fullref{Theorem}{thm:unarymapsingle}), (c) contains an
  infinite outwardly oriented periodic path (arising from case~2 in
  \fullref{Theorem}{thm:unarymapsingle}), and (d) contains a vertex of
  outdegree $0$ (where the map is undefined, arising from case~3 in
  \fullref{Theorem}{thm:unarypartialmapsingle}).}
\label{fig:maporbits}
\end{figure}

\begin{proof}

\emph{First part.} Let $(a^*,\phi)$ be a unary FA-pre\-sent\-a\-tion for
$(X,f)$, where $f : X \to X$ has only one orbit. Let $Q$ be the
transitive closure of $f$; then $\Lambda(Q,\phi)$ is
regular by \fullref{Theorem}{thm:transclosure}. Define a relation
$R$ on $X$ by
\[
(x,y) \in R \iff \bigl((x,y) \in Q\bigr) \land \bigl((y,x) \in Q\bigr).
\]
Then two distinct elements of $X$ are related by $R$ if and only if
they lie in the same strong orbit of $f$. Notice that $(x,x) \in R$ if
and only if $(x)f^m = x$ for some $m > 0$. Thus the strong orbits of
$f$ are the $R$-classes plus singleton orbits for all elements not in
some $R$-class.

Notice that any $R$-classes must form a directed cycle. There cannot
be two distinct $R$-classes, for otherwise they would be connected by
some path, and then some vertex on this path would have outdegree $2$,
which is impossible. So there is either a unique $R$-class or no
$R$-class. Deal with these cases separately.

\begin{enumerate}

\item There is a unique $R$-class. Since $\Lambda(R,\phi)$ is regular,
  we can factor $(X,f)$ by $R$ to get a unary FA-pre\-sent\-a\-ble map
  $(X,f')$, which is essentially the map $(X,f)$ with all the points
  of the $R$-class merged to a single point $z$. Notice that $(z)f'
  = z$.

  Consider the graph $(X',f')$. Remove the single edge from $z$ to
  itself. The resulting graph is a directed tree (with all edges
  oriented towards $z$) and is unary FA-pre\-sent\-a\-ble. So by
  \fullref{Theorem}{thm:directedtreechar}, this tree consists of a
  finite tree with finitely many periodic paths and shallow stars
  attached. Since all the edges are oriented towards $z$, this element
  $z$ must lie in the finite graph (since the orientation of edges
  along the spine of a periodic paths is periodic). So all the
  periodic decorated paths and all the shallow stars are inwardly
  oriented.

  Thus the original graph $(X,f)$ must consist of these same inwardly
  oriented periodic paths and inwardly oriented shallow stars attached
  to finite inward rooted trees, which are in turn attached at their
  roots to the finite cycle that forms the unique $R$-class of
  $f$. Thus case~1 in the statement of the theorem holds.

\item There is no $R$-class. Then the graph $(X,f)$ is a tree and so
  by \fullref{Theorem}{thm:directedtreechar}, this tree consists of a
  finite tree $F$ with finitely many periodic paths and shallow stars
  attached. Since all vertices have outdegree $1$, all the shallow
  stars are inwardly oriented. Since every vertex has outdegree $1$,
  all edges of $F$ must be oriented towards a particular vertex, where
  a single outwardly oriented periodic path must be attached. All the
  other periodic paths attached must be inwardly oriented, again by
  the fact that all vertices have outdegree $1$. Thus case~2 in the
  statement of the theorem holds.

\end{enumerate}

\smallskip
\noindent\emph{Second part.} If $(X,\eta)$ is a graph as described in
the theorem statement, then every vertex of $(X,\eta)$ has outdegree
exactly $1$. Thus we can define a map $f : X \to X$ by letting $(x)f$
be the terminal vertex of the unique edge starting at $x$. It is clear
that $(X,\eta)$ is the graph of $(X,f)$. Furthermore, $(X,\eta)$, and
hence $(X,f)$ is unary FA-pre\-sent\-a\-ble by
\fullref{Lemmata}{lem:attachmentunaryfa} and
\ref{lem:shallowstarperiodicpathunaryfa}.
\end{proof}

\begin{theorem}
\label{thm:unarypartialmapsingle}
A partial map with a single orbit is unary FA-pre\-sent\-a\-ble if and
only if its orbit can be obtained in as described in case~1 or~2 of
\fullref{Theorem}{thm:unarymapsingle} or in the following way:
\begin{enumerate}
\item[3.] Start with an inward rooted finite tree. To any vertices of
  this finite graph, attach finitely many inwardly oriented periodic
  paths (at their bases) and finitely many inwardly oriented shallow
  stars (at their centres).
\end{enumerate}
\end{theorem}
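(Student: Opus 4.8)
The plan is to mimic the proof of \fullref{Theorem}{thm:unarymapsingle}, isolating the single new phenomenon: an undefined point. Let $(a^*,\phi)$ be a unary FA-pre\-sent\-a\-tion for $(X,f)$, where $f$ is a partial map with a single orbit, so $X$ is that orbit and the graph of $f$ (which, being the relation $f$ itself, is unary FA-pre\-sent\-a\-ble) is connected. If $f$ happens to be total, then $(X,f)$ is a unary FA-pre\-sent\-a\-ble map with a single orbit, and \fullref{Theorem}{thm:unarymapsingle} already places its orbit in case~1 or case~2. So the work is confined to the case where $f$ is not total: fix a vertex $z$ at which $f$ is undefined, equivalently a vertex of out-degree $0$ in the graph of $f$ (every other vertex having out-degree $1$).

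The first step is to prove two structural facts purely from the hypothesis that out-degrees are at most $1$: (i) $z$ is the \emph{unique} vertex of out-degree $0$; and (ii) the graph of $f$ has no directed cycle, and the forward $f$-orbit of every vertex reaches $z$. Both follow from one observation: along the unique undirected path from $z$ to any other vertex, the edge incident to $z$ points towards $z$, and since out-degrees are at most $1$ this propagates, forcing every edge of that path to point towards $z$; a second vertex of out-degree $0$, a directed cycle, or an infinite forward ray would each compel some out-edge to point away from $z$ somewhere on such a path, a contradiction. Consequently the graph of $f$ is connected with no directed cycle, and since in a functional graph every undirected cycle would be a directed cycle, it is therefore a directed tree.

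Next I would invoke \fullref{Theorem}{thm:directedtreechar}: the graph of $f$ is a finite directed tree $F$ with finitely many periodic paths attached at their bases and finitely many shallow stars attached at their centres. Out-degree at most $1$ forces every shallow star to be inwardly oriented (otherwise its centre has infinite out-degree), and no periodic path to be outwardly oriented (the spine of an outwardly oriented periodic path is an infinite forward ray, contradicting~(ii)); so every attached structure is inwardly oriented. Attaching an inwardly oriented periodic path at its base, or an inwardly oriented shallow star at its centre, contributes only in-edges, so every vertex of $F$ retains its $F$-out-degree in the whole graph. Hence $z \in F$, and $F$ is a finite directed tree in which $z$ has out-degree $0$ and every other vertex has out-degree $1$ — that is, $F$ is an inward rooted tree with root $z$. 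This is exactly case~3. For the converse, any graph built as in case~1, 2, or~3 has every vertex of out-degree exactly $1$ except, in case~3, the root of its underlying inward rooted finite tree; one defines $f$ by following the unique out-edge (leaving $f$ undefined at the out-degree-$0$ vertex), obtaining a partial map with a single orbit whose graph is the given graph, and unary FA-pre\-sent\-a\-bility follows from \fullref{Lemmata}{lem:attachmentunaryfa} and~\ref{lem:shallowstarperiodicpathunaryfa} together with the triviality that finite trees are unary FA-pre\-sent\-a\-ble (cases~1 and~2 being precisely the maps of \fullref{Theorem}{thm:unarymapsingle}).

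I expect the main obstacle to be the elementary but fiddly path-chasing behind (i) and (ii) — that a genuinely partial map on a single orbit has exactly one undefined point, no directed cycle, and all flow converging to that point — and the attendant check that the finite tree produced by \fullref{Theorem}{thm:directedtreechar} is forced to be inward rooted; the remainder is a routine adaptation of the total-map argument.
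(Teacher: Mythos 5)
Your proof is correct, but it follows a genuinely different route from the paper's. The paper never analyses the graph of the partial map directly: it completes $f$ to a total map $f'$ by setting $(x)f'=x$ wherever $f$ is undefined (the support of $f$ is first-order definable, so unary FA-presentability is preserved), applies \fullref{Theorem}{thm:unarymapsingle} to $f'$, observes that if $f\neq f'$ then an added loop is a cycle and so case~1 applies with the initial cycle being a loop at a single vertex, and recovers case~3 by deleting that loop; this reduction yields the uniqueness of the undefined point for free (there is at most one $R$-class, hence at most one added loop) and reuses the total-map case analysis wholesale. You instead argue directly that a sink $z$, out-degrees at most $1$, and connectedness force the graph of $f$ to be a directed tree with every edge oriented towards $z$, and then invoke \fullref{Theorem}{thm:directedtreechar} and pin down orientations; this is sound and has the merit of making explicit that a genuinely partial single-orbit map has exactly one undefined point towards which all flow converges, at the cost of re-deriving orientation constraints that the paper inherits from case~2 of \fullref{Theorem}{thm:unarymapsingle}. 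Two small points to tidy rather than genuine gaps: you appeal to ``the unique undirected path from $z$'' before acyclicity is established --- the propagation argument should be run along an arbitrary simple path (it needs no uniqueness), with tree-ness concluded afterwards; and your parenthetical justifications that attached stars and periodic paths are inwardly oriented (infinite out-degree at the centre, infinite forward ray along the spine) only exclude the fully outward orientations, whereas the definition requires \emph{every} template edge to point towards the distinguished vertex --- but this follows at once from your fact that every edge points towards $z$.
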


\begin{proof}
\emph{First part.} Let $(a^*,\phi)$ be a unary FA-pre\-sent\-a\-tion
for $(X,f)$, where $f : X \to X$ has only one orbit. Extend $f$ to a
complete map $f' : X \to X $ by defining
\[
(x)f' = \begin{cases} (x)f & \text{if $(x)f$ is defined} \\
x & \text{otherwise.}
\end{cases}
\]
From the graph perspective $(X,f')$ is formed by taking the graph
$(X,f)$ and adding a loop at every vertex of outdegree $0$.

Notice that the support of $f$ is first-order definable and so
$(X,f')$ is also unary FA-presentable. Furthermore, $(X,f')$ also has
only one orbit. So the graph $(X,f')$ is as described in
\fullref{Theorem}{thm:unarymapsingle}. If $(X,f)$ and $(X,f')$ are
identical, the proof is complete. So assume that $(X,f)$ and $(X,f')$
are distinct. Then at least one loop is added to the graph $(X,f)$ to
form $(X,f')$. So case~1 of \fullref{Theorem}{thm:unarymapsingle}
applies, with the initial cycle being a loop at a single
vertex. Removing this (unique) loop to recover $(X,f)$ gives a graph
obtained as described in case~3 of the theorem statement.

\medskip
\noindent\emph{Second part.} If $(X,\eta)$ is a graph as described in
the statement, then every vertex of $(X,\eta)$ has outdegree $0$ or
$1$. Thus we can define a partial map $f : X \to X$ by letting $(x)f$
be the terminal vertex of the unique edge starting at $x$, if such an
edge exists, and otherwise leaving $(x)f$ undefined. It is clear that
$(X,\eta)$ is the graph of $(X,f)$. Furthermore, $(X,\eta)$, and hence
$(X,f)$ is unary FA-pre\-sent\-a\-ble by
\fullref{Lemmata}{lem:attachmentunaryfa} and
\ref{lem:shallowstarperiodicpathunaryfa}.
\end{proof}

With this characterization of individual orbits, the characterization
of the orbit structures of unary FA-pre\-sent\-a\-le maps now follows
quickly:

\begin{theorem}
\label{thm:mapchar}
A map (respectively, partial map) is unary FA-pre\-sent\-a\-ble if and
only if the following conditions hold:
\begin{enumerate}

\item There is a bound on the size of the finite orbits.

\item There are finitely many infinite orbits.

\item Each orbit is unary FA-pre\-sent\-a\-ble and so as described in
  \fullref{Theorem}{thm:unarymapsingle} (respectively, \fullref{Theorem}{thm:unarypartialmapsingle}).

\end{enumerate}
\end{theorem}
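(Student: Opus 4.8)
The plan is to prove both directions by essentially repeating the argument used for \fullref{Theorem}{thm:directedforestchar}, with orbits of the (partial) map playing the role that connected components played there; all the substantive work has already been done in \fullref{Theorem}{thm:transclosure}, \fullref{Theorem}{thm:directedtreechar}, and \fullref{Theorems}{thm:unarymapsingle} and~\ref{thm:unarypartialmapsingle}.

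\textit{Forward direction.} Suppose $(X,f)$ admits an injective unary FA-pre\-sent\-a\-tion $(a^*,\phi)$, and let $\eta = \{(x,(x)f) : (x)f \text{ is defined}\}$ be the edge relation of the graph of $f$, which is first-order definable from $f$, so $\Lambda(\eta,\phi)$ is regular. Let $\zeta$ be the equivalence relation generated by $\eta$; by \fullref{Corollary}{corol:equivrelgen}, $\Lambda(\zeta,\phi)$ is regular, so $(a^*,\phi)$ is a unary FA-pre\-sent\-a\-tion for $(X,f,\zeta)$. The $\zeta$-classes are precisely the orbits of $f$, so \fullref{Theorem}{thm:equivrelchar} immediately yields conditions~(1) and~(2). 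For condition~(3), fix an infinite orbit $U$; the set $U$ is first-order definable in terms of $\zeta$ together with any chosen $u \in U$, so the language $K = U\phi^{-1}$ is regular and $(K,\phi|_K)$ is a unary FA-pre\-sent\-a\-tion for the single-orbit (partial) map $(U,f|_U)$. Hence $U$ has the form described in \fullref{Theorem}{thm:unarymapsingle} (respectively \fullref{Theorem}{thm:unarypartialmapsingle}); finite orbits are trivially unary FA-pre\-sent\-a\-ble.

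\textit{Backward direction.} Assume (1)--(3). Since the finite orbits have bounded size, only finitely many isomorphism types of finite orbit occur among them. Those isomorphism types occurring only finitely often contribute, altogether, only finitely many finite (hence unary FA-pre\-sent\-a\-ble) summands, and their disjoint union is unary FA-pre\-sent\-a\-ble by iterated use of \fullref{Lemma}{lem:union}. For each isomorphism type occurring infinitely often, choose a representative finite orbit; the disjoint union of countably many copies of it is unary FA-pre\-sent\-a\-ble by \fullref{Lemma}{lem:countableunion}. The finitely many infinite orbits are unary FA-pre\-sent\-a\-ble by hypothesis~(3). The graph of $(X,f)$ is the disjoint union of these finitely many pieces (the disjoint union of graphs in which every vertex has out-degree at most one is again such a graph, hence the graph of a partial map, and a total map if all out-degrees are one), so $(X,f)$ is unary FA-pre\-sent\-a\-ble by iterated application of \fullref{Lemma}{lem:union}. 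The partial-map case is word-for-word identical, using \fullref{Theorem}{thm:unarypartialmapsingle} in place of \fullref{Theorem}{thm:unarymapsingle}.

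I do not expect any genuine obstacle here: the only care required is the combinatorial bookkeeping of orbit isomorphism types (separating those that occur finitely often from those that occur infinitely often) and checking that the single-orbit classifications of \fullref{Theorems}{thm:unarymapsingle} and~\ref{thm:unarypartialmapsingle} apply to each orbit independently, which they do since the orbit's representative language $U\phi^{-1}$ is regular. The result is thus a routine corollary of the preceding theorems.
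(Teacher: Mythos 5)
Your proposal is correct and follows essentially the same route as the paper: the forward direction uses \fullref{Corollary}{corol:equivrelgen} together with \fullref{Theorem}{thm:equivrelchar} on the equivalence relation generated by the graph of $f$, plus first-order definability of each orbit, and the backward direction uses \fullref{Lemmata}{lem:union} and \ref{lem:countableunion} exactly as in the proof of \fullref{Theorem}{thm:directedforestchar}. The only difference is that you spell out the isomorphism-type bookkeeping for the finite orbits, which the paper merely references.
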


\begin{proof}
Let $(X,f : X \to X)$ be a unary FA-pre\-sent\-a\-ble map
(respectively, partial map). If $X$ is finite, there is nothing to
prove. So assume $X$ is infinite and let $(a^*,\phi)$ be a unary
FA-pre\-sent\-a\-tion for $(X,f)$. Let $Q$ be the equivalence relation
generated by $f$. Since $\Lambda(Q,\phi)$ is regular by
\fullref{Corollary}{corol:equivrelgen}, $Q$ must have finitely many
infinite equivalence classes and a bound on the size of its finite
equivalence classes by \fullref{Theorem}{thm:equivrelchar}. But the
equivalence classes are simply the orbits of $f$. It remains to
observe that since the membership relation of each of the equivalence
classes is first-order definable, the set of words representing
elements of any orbit is regular, and thus the map (respectively,
partial map) $f$ restricted to any orbit is unary
FA-pre\-sent\-a\-ble, and hence the restriction of $f$ to each such
infinite orbit is thus as desribed in
\fullref{Theorem}{thm:unarymapsingle} (respectively,
\fullref{Theorem}{thm:unarypartialmapsingle}).

In the other direction, the result follows by applying
\fullref{Lemmata}{lem:union} and \ref{lem:countableunion} in a manner
similar to the proof of \fullref{Theorem}{thm:directedforestchar}.
\end{proof}

We can now characterize unary FA-pre\-sent\-a\-ble surjections, injections,
and bijections.

\begin{theorem}
\label{thm:surjectionchar}
A surjective map is unary FA-pre\-sent\-a\-ble if and only if the following
conditions hold:
\begin{enumerate}

\item There is a bound on the size of the finite orbits, and every
  finite orbit is a cycle.

\item There are finitely many infinite orbits, and each can be
  obtained in one of two ways:

\begin{enumerate}

\item Start with a finite directed cycle. First, attach finitely many
  inward rooted finite trees (at their roots) to the cycle. To every
  leaf vertex, and possibly to other vertices, of the resulting finite
  graph, attach finitely many inward infinite paths (at their bases).

\item Start with an inward rooted finite tree. To the root of the tree,
  attach root one outwardly oriented infinite path (at its base). To
  every leaf vertex, and possibly to other vertices, of the resulting
  graph, attach finitely many inwardly oriented periodic paths (at
  their bases).

\end{enumerate}
\end{enumerate}
A partial surjective map is unary FA-pre\-sent\-a\-ble if and only if
the following conditions hold: its finite orbits are as described in
condition~1, and its infinite orbit are obtained either as
described in case~2(a) or~2(b) above or in the following way:
\begin{enumerate}

\item[2.]
\begin{enumerate}

\item[(c)] Start with an inward rooted finite tree. To every leaf vertex,
  and possibly to other vertices, of the resulting graph, attach
  finitely many inwardly oriented periodic paths (at their bases).

\end{enumerate}
\end{enumerate}

\end{theorem}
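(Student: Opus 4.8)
The plan is to read off this characterization from the orbit classification of unary FA-presentable maps and partial maps (\fullref{Theorem}{thm:mapchar}, via \fullref{Theorems}{thm:unarymapsingle} and~\ref{thm:unarypartialmapsingle}) by selecting exactly those orbit shapes that yield surjective maps. The elementary observation underlying this is that a (partial) map $f : X \to X$ is surjective if and only if every vertex of its graph has in-degree at least $1$; and since $x$ and $(x)f$ always lie in the same orbit, this can be checked orbit by orbit. So the forward direction is: given a unary FA-presentable (partial) surjection $f$, apply \fullref{Theorem}{thm:mapchar} to obtain a bound on the finite orbits, finitely many infinite orbits, and each orbit as described in \fullref{Theorem}{thm:unarymapsingle} (respectively \fullref{Theorem}{thm:unarypartialmapsingle}) --- the restriction of $f$ to any orbit being again unary FA-presentable because, by \fullref{Corollary}{corol:equivrelgen}, the equivalence relation generated by $f$ is regular and so membership of an orbit is first-order definable --- and then cross out the orbit shapes that contain a vertex of in-degree $0$.

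Carrying out that last step is the heart of the argument. A finite orbit of the type in case~1 of \fullref{Theorem}{thm:unarymapsingle} is a directed cycle with inward rooted finite trees attached; any such tree has a leaf of in-degree $0$, and no finite gadget can repair that, so a finite orbit of a surjection must be a bare cycle, giving condition~1. Next, an inwardly oriented shallow star always contains vertices of in-degree $0$ (the outermost vertices of its rays), so no shallow star can occur. An inwardly oriented periodic path contains a vertex of in-degree $0$ unless its template is a simple directed path with $t_1$ its far endpoint, in which case the periodic path is isomorphic to an inward infinite path; dually an outwardly oriented periodic path must be an outward infinite path; and every leaf of every attached finite tree must carry at least one inward infinite path to eliminate its (otherwise in-degree $0$) source. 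Imposing exactly these restrictions on the shapes permitted by \fullref{Theorems}{thm:unarymapsingle} and~\ref{thm:unarypartialmapsingle} leaves precisely cases~2(a) and~2(b), together with the extra case~2(c) in the partial setting, which arises from case~3 of \fullref{Theorem}{thm:unarypartialmapsingle} (no distinguished outward path, since the map may be left undefined at the root).

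For the converse direction, each graph of the prescribed shape is the graph of a (partial) surjection --- every vertex has out-degree $1$ (respectively at most $1$), and in-degree at least $1$ by inspection of cycles, of finite trees whose leaves carry inward infinite paths, and of the inward and outward infinite paths themselves --- and is unary FA-presentable by \fullref{Lemmata}{lem:attachmentunaryfa} and~\ref{lem:shallowstarperiodicpathunaryfa}; the whole map is then assembled from its orbits, with the finitely many ``exceptional'' orbits together with countably many copies of each recurring finite orbit and of each infinite orbit combined by iterated use of \fullref{Lemmata}{lem:union} and~\ref{lem:countableunion}, surjectivity being stable under disjoint union. I expect the main obstacle to be the case analysis of the second paragraph: one must carefully enumerate, for each building block in \fullref{Theorems}{thm:unarymapsingle} and~\ref{thm:unarypartialmapsingle}, which vertices are sources, and argue in particular that attaching a \emph{finite} tree can never eliminate a source (forcing infinite inward paths) and that any branching in a periodic path's template leaves a permanent source --- routine, but the precise wording of the theorem is pinned down exactly here.
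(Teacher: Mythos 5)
Your proposal is correct and takes essentially the same approach as the paper: both directions are read off from \fullref{Theorems}{thm:mapchar}, \ref{thm:unarymapsingle} and \ref{thm:unarypartialmapsingle} by noting that surjectivity amounts to every vertex of the orbit graph having in-degree at least $1$, and then discarding exactly the orbit shapes (shallow stars, branching periodic paths, tree leaves with no inward infinite path attached) that contain sources. The only minor divergence is in the partial case, where the paper completes $f$ by fixing its undefined points and reduces to the complete case, whereas you filter case~3 of \fullref{Theorem}{thm:unarypartialmapsingle} directly; both routes are valid.
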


\begin{figure}[t]
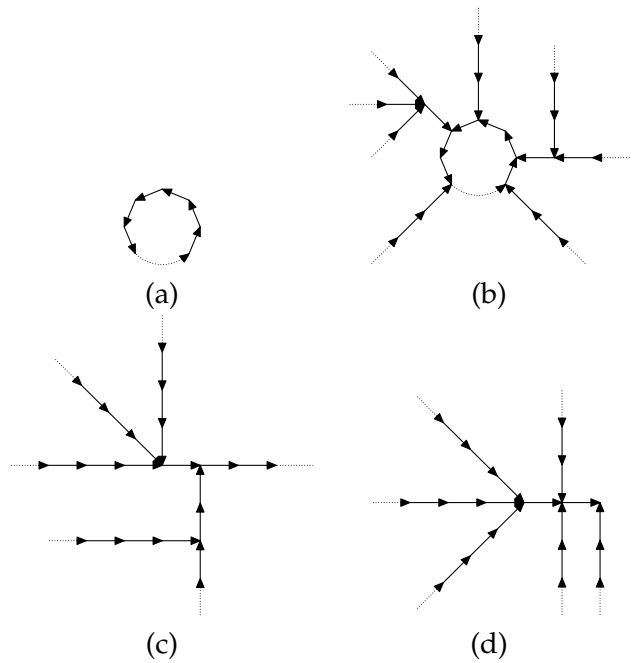

\centerline{%
\begin{tabular}{cc}
\includegraphics{\jobname-map-cycle.eps} &
\includegraphics{\jobname-map-surjection2.eps} \\
(a) & (b) \\
\includegraphics{\jobname-map-surjection3.eps} &
\includegraphics{\jobname-map-surjection4.eps} \\
(c) & (d)
\end{tabular}%
}
\caption{Possible example orbits of unary FA-pre\-sent\-a\-ble
  surjections and partial surjections: (a) is finite, (b) contains a cycle (arising from
  case~2(a) in \fullref{Theorem}{thm:surjectionchar}), (c) contains an
  infinite outwardly oriented periodic path (arising from case~2(b) in
  \fullref{Theorem}{thm:surjectionchar}), and (d) contains a vertex
  with outdegree $0$ (where the map is undefined arising from
  case~2(c) in \fullref{Theorem}{thm:surjectionchar}).}
\label{fig:surjectionorbits}
\end{figure}

\begin{proof}
\textit{Complete surjective maps.} Let $(X,f : X \to X)$ be a unary
FA-pre\-sent\-a\-ble surjective map. Then its orbits are as described
in \fullref{Theorems}{thm:mapchar} and \ref{thm:unarymapsingle}. Every
vertex of the graph $(X,f)$ has indegree at least $1$. Every finite
orbit must therefore be a cycle.

Consider some infinite orbit. Suppose first that this orbit contains a
cycle (case~1 of \fullref{Theorem}{thm:unarymapsingle}). Every vertex
that does not lie on this cycle must lie on an infinite inward path,
by induction using the fact that every vertex has indegree at least
one. Thus there can be no shallow stars attached, all attached
periodic paths must be inward infinite paths, and at least one inward
infinite path must be attached to every leaf vertex of the finite
graph.

Suppose now that this orbit does not contain a cycle (case~2 of
\fullref{Theorem}{thm:unarymapsingle}). Then every vertex must lie on
an infinite inward path, by induction using the fact that every vertex
has indegree at least one. Thus there can be no shallow stars
attached, a single outward infinite path must be attached to the root
of the finitre tree, all other attached periodic paths paths must be
inward infinite path, and at least one inward infinite path must be
attached to every leaf vertex of the initial finite tree.

In the other direction, a graph of the form described in the statement
is the graph of a unary FA-pre\-sent\-a\-ble map by
\fullref{Theorem}{thm:mapchar}. Furthermore, every vertex of such a
graph has outdegree exactly $1$ and indegree at least $1$ and hence is
the graph of a surjection.

\medskip
\textit{Partial surjective maps.} The strategy is essentially the same
as the proof \fullref{Theorem}{thm:unarypartialmapsingle}, so we only
sketch the proof. Let $(X,f : X \to X)$ be a unary
FA-pre\-sent\-a\-ble surjective partial map. Extend the map to a
complete map $f'$ by defining $(x)f' = x$ whenever $(x)f$ is
undefined. Note that this preserves unary FA-presentability and
surjectivity. Any infinite orbit where $f'$ does not coincide with $f$
contains a loop and so case~2(a) applies. Removing this loop yields a
graph that can be obtained as per case~2(c).
\end{proof}

\begin{theorem}
\label{thm:injectionchar}
An injective map is unary FA-pre\-sent\-a\-ble if and only if its orbits
satisfy the following conditions:
\begin{enumerate}

\item There is a bound on the size of the finite orbits, and every
  finite orbit is a cycle.

\item There are finitely many infinite orbits, each being either an
  outwardly oriented infinite path or a bi-infinite path.

\end{enumerate}
A partial injective map is unary FA-pre\-sent\-a\-ble if and only if its orbits
satisfy the following conditions:
\begin{enumerate}

\item[3.] There is a bound on the size of the finite orbits, and every
  finite orbit is a cycle or a finite path.

\item[4.] There are finitely many infinite orbits, each being either an
  outwardly oriented infinite path, an inwardly oriented infinite path
  or a bi-infinite path.

\end{enumerate}
\end{theorem}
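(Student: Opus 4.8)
The plan is to deduce this from the orbit-structure characterizations already in hand --- \fullref{Theorem}{thm:mapchar} together with \fullref{Theorems}{thm:unarymapsingle} and \ref{thm:unarypartialmapsingle} --- by adding the observation that in the graph of an injective map every vertex has in-degree at most $1$, while in the graph of a complete map every vertex has out-degree exactly $1$ (and at most $1$ for a partial map). So the strategy is: take a unary FA-presentable injective (partial) map, invoke \fullref{Theorem}{thm:mapchar} to get boundedly many finite orbits, finitely many infinite orbits, and each orbit described by \fullref{Theorem}{thm:unarymapsingle} (respectively \fullref{Theorem}{thm:unarypartialmapsingle}); then determine which of those orbit shapes survive the in-degree-$\leq 1$ constraint, and check the converse directly.

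First I would handle complete injective maps. A shallow star has a vertex of infinite degree, so (out-degree being $1$) of infinite in-degree, which is impossible; hence no shallow stars occur. Any vertex of degree $\geq 3$ has in-degree $\geq 2$, again impossible, so every inward rooted finite tree appearing is a path and every periodic path is a one-way infinite path (inward or outward according to its orientation). Finally, attaching an inward infinite path at its base to a vertex gives that vertex in-degree $\geq 1$, so this can be done only at a vertex of in-degree $0$, and the structures in \fullref{Theorem}{thm:unarymapsingle} have at most one such vertex; thus at most one inward infinite path may be attached. Running through the two cases: in case~1, attaching anything to the cycle raises some cycle-vertex's in-degree to $2$, so the orbit is exactly the finite cycle; in case~2, a finite path with one outward infinite path attached at one end and possibly one inward infinite path at the other end is precisely an outwardly oriented infinite path or a bi-infinite path. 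This establishes conditions~1 and~2. For partial injective maps the same bookkeeping applies, now also to case~3 of \fullref{Theorem}{thm:unarypartialmapsingle}: again no shallow stars, every tree and template is a path, and at most one inward infinite path is attachable, so case~3 yields a finite path or an inwardly oriented infinite path; together with cases~1 and~2 this gives conditions~3 and~4.

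For the converse, each orbit listed in conditions~1--4 is one of the shapes allowed by \fullref{Theorems}{thm:unarymapsingle} and \ref{thm:unarypartialmapsingle} (a finite cycle, a finite path, or a bi-infinite, inward, or outward infinite path, each obtained from a trivial finite tree or cycle by attaching at most one infinite path), so a disjoint union of boundedly many finite such orbits and finitely many infinite ones is unary FA-presentable by \fullref{Theorem}{thm:mapchar}; checking directly from the edge relation that every vertex has in-degree at most $1$ (and out-degree $1$, respectively at most $1$) shows the induced map is an injection (respectively a partial injection). The main obstacle is the bookkeeping-heavy forward direction: going through every sub-case of \fullref{Theorems}{thm:unarymapsingle} and \ref{thm:unarypartialmapsingle} and verifying that the in-degree constraint collapses each to one of the claimed forms --- in particular excluding shallow stars, branching trees and templates, and multiple attachments --- which is routine once the degree argument is in place.
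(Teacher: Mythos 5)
Your proposal is correct and follows essentially the same route as the paper: invoke \fullref{Theorem}{thm:mapchar} together with \fullref{Theorems}{thm:unarymapsingle} and \ref{thm:unarypartialmapsingle}, then use the in-degree-at-most-$1$ constraint to collapse the possible orbit shapes, with the converse handled by \fullref{Theorem}{thm:mapchar} plus a degree check. The paper merely states the conclusion of the degree argument more tersely, whereas you spell out the exclusion of shallow stars, branching, and multiple attachments explicitly; the content is the same.
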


\begin{figure}[t]
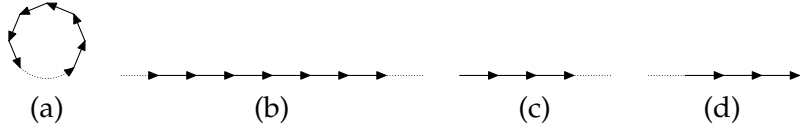

\centerline{%
\begin{tabular}{cccc}
\includegraphics{\jobname-map-cycle.eps} &
\includegraphics{\jobname-map-bipath.eps} &
\includegraphics{\jobname-map-outpath.eps} &
\includegraphics{\jobname-map-inpath.eps} \\
(a) & (b) & (c) & (d)
\end{tabular}%
}
\caption{Possible example orbits of unary FA-pre\-sent\-a\-ble
  injections and partial injections: (a) is a finite cycle, (b) is a
  bi-infinite path, (c) is an outward infinite path, and (d) is an
  inward infinite path. All three of (a), (b), and (c) can arise in
  (complete) injections and partial injections, but (d) can only arise
  in partial injections.}
\label{fig:injectionorbits}
\end{figure}

\begin{proof}
Let $(X,f : X \to X)$ be a unary FA-pre\-sent\-a\-ble injective map
(respectively, partial injective map). Then its orbits are as
described in \fullref{Theorems}{thm:mapchar} and
\ref{thm:unarymapsingle} (respectively, \fullref{Theorems}{thm:mapchar} and
\ref{thm:unarypartialmapsingle}). In particular, there is a bound on the size
of the finite orbits and finitely many infinite orbits.

Now, every vertex of the graph $(X,f)$ has outdegree $1$
(respectively, at most $1$) and indegree at most $1$. It follows that
every finite orbit must be a cycle (respectively, a cycle or a finite
path), and every infinite orbit either an outward (respectively,
outward or inward) infinite path or a bi-infinite path.

In the other direction, a graph of the form described in the statement
is the graph of a unary FA-pre\-sent\-a\-ble map (respectively,
partial map) by \fullref{Theorem}{thm:mapchar}. Furthermore, every
vertex of such a graph has outdegree $1$ (respectively, at most $1$)
and indegree at most $1$ and hence is the graph of a injection
(respectively, partial injection).
\end{proof}

\begin{theorem}
\label{thm:bijectionchar}
A bijective map is unary FA-pre\-sent\-a\-ble if and only if its orbits
satisfy the following conditions:
\begin{enumerate}

\item There is a bound on the size of the finite orbits, and every
  finite orbit is a cycle.

\item There are finitely many infinite orbits, each being a bi-infinite path.

\end{enumerate}
A partial bijection is unary FA-pre\-sent\-a\-ble if and only if its orbits
satisfy the following conditions:
\begin{enumerate}

\item There is a bound on the size of the finite orbits, and every
  finite orbit is a cycle.

\item There are finitely many infinite orbits, each being an inward
  infinite path or a bi-infinite path.

\end{enumerate}
\end{theorem}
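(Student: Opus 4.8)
The plan is to obtain this as a short consequence of \fullref{Theorem}{thm:injectionchar}, by adding the constraint that a bijection is surjective, i.e.\ that every vertex of its graph has indegree exactly~$1$ rather than merely at most~$1$.

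\emph{Complete bijections.} First I would note that a bijective map is in particular injective, so a unary FA-pre\-sent\-a\-ble bijection $(X,f)$ has orbits exactly as in \fullref{Theorem}{thm:injectionchar}: a bound on finite-orbit size, finitely many infinite orbits, each finite orbit a cycle, and each infinite orbit an outwardly oriented infinite path or a bi-infinite path. Then I would invoke surjectivity: every vertex has indegree~$\geq 1$, so no orbit can be an outwardly oriented infinite path (whose base has indegree~$0$), leaving only bi-infinite paths, which gives the forward direction. For the converse I would start from a graph whose components are finite cycles and bi-infinite paths, with bounded finite-component size and finitely many infinite components; every vertex then has in- and out-degree~$1$, so this is the graph of a bijection, and each component is an orbit of a form permitted by \fullref{Theorem}{thm:unarymapsingle} — a finite cycle arises from case~1 with nothing attached, and a bi-infinite path from case~2 by taking the inward rooted finite tree to be a single vertex and attaching one outwardly oriented periodic path and one inwardly oriented periodic path (an inward path) at that vertex — so \fullref{Theorem}{thm:mapchar} shows $(X,f)$ is unary FA-pre\-sent\-a\-ble.

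\emph{Partial bijections.} The argument is the same with the partial-map halves of \fullref{Theorems}{thm:injectionchar}, \ref{thm:unarypartialmapsingle}, and \ref{thm:mapchar}. Here a partial bijection is an injective partial map whose image is all of $X$, so every vertex of its graph has indegree exactly~$1$ and outdegree at most~$1$. As a partial injection its orbits are: bounded finite orbits, each a cycle or a finite path; and finitely many infinite orbits, each an outwardly oriented infinite path, an inwardly oriented infinite path, or a bi-infinite path. The constraint indegree~$\geq 1$ deletes the finite paths (an endpoint has indegree~$0$) and the outwardly oriented infinite paths (the base has indegree~$0$), leaving exactly the orbit types in the statement. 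Conversely, a graph whose components are finite cycles, inward infinite paths, and bi-infinite paths, suitably bounded, has indegree~$1$ and outdegree~$\leq 1$ everywhere, hence is the graph of a partial bijection, and all its components are orbits as described in \fullref{Theorem}{thm:unarypartialmapsingle} (finite cycles by case~1 of \fullref{Theorem}{thm:unarymapsingle}, inward infinite paths by case~3, bi-infinite paths by case~2), so \fullref{Theorem}{thm:mapchar} applies.

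There is no genuine obstacle here; the result is essentially a corollary of the (partial) injection characterization. The only care needed is to identify precisely which orbit shapes allowed for (partial) injections are killed by the extra surjectivity (equivalently, indegree-one) requirement, and to double-check that each surviving shape genuinely fits one of the cases of \fullref{Theorems}{thm:unarymapsingle} and \ref{thm:unarypartialmapsingle} so that \fullref{Theorem}{thm:mapchar} can be applied in the backward direction.
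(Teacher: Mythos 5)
Your proposal is correct and follows essentially the same route as the paper: deduce the forward direction from Theorem \ref{thm:injectionchar} by using the indegree-one constraint to eliminate outward infinite paths (and, for partial bijections, finite paths), and obtain the converse from Theorem \ref{thm:mapchar} after checking the surviving orbit shapes fit Theorems \ref{thm:unarymapsingle} and \ref{thm:unarypartialmapsingle}. Your explicit verification of which cases realize cycles, inward infinite paths, and bi-infinite paths is slightly more detailed than the paper's, but the argument is the same.
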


\begin{figure}[t]
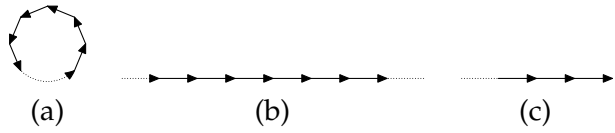

\centerline{%
\begin{tabular}{ccc}
\includegraphics{\jobname-map-cycle.eps} &
\includegraphics{\jobname-map-bipath.eps} &
\includegraphics{\jobname-map-inpath.eps} \\
(a) & (b) & (c)
\end{tabular}%
}
\caption{Possible example orbits of unary FA-pre\-sent\-a\-ble
  bijections and partial bijections: (a) is a finite cycle, (b) is a
  bi-infinite path, and (c) is an inward infinite path. Both (a) and
  (b) can arise in (complete) bijections and partial bijections, but
  (c) only in partial bijections.}
\label{fig:bijectionorbits}
\end{figure}

\begin{proof}
Let $(X,f : X \to X)$ be a unary FA-pre\-sent\-a\-ble bijection
(respectively, partial bijection). Then in particular $f$ is injective
and so its orbits are as described in
\fullref{Theorem}{thm:injectionchar}. Every vertex of the graph
$(X,f)$ has outdegree $1$ (respectively, at most $1$) and indegree $1$
and no infinite orbit can consist of a outward infinite path. Every
infinite orbit is thus a bi-infinite path (respectively, an inward
infinite path or a bi-infinite path).

In the other direction, a graph of the form described in the statement
is the graph of a unary FA-pre\-sent\-a\-ble map (respectively,
partial map) by \fullref{Theorem}{thm:mapchar}. Furthermore, every
vertex of such a graph has outdegree exactly $1$ (respectively, at
most $1$) and indegree exactly $1$ and hence is the graph of a
bijection (respectively, partial bijection).
\end{proof}


\end{document}